\NewDocumentCommand{\tensor}{t_}
 {%
  \IfBooleanTF{#1}
    {\tensop}
    {\otimes}%
 }
\NewDocumentCommand{\tensop}{m}
 {%
  \mathbin{\mathop{\otimes}\displaylimits_{#1}}%
 }
\providecommand{\keywords}[1]{\textbf{\textit{Keywords---}} #1}
\newcommand{\D}{\mathrm{d}}
\newcommand{\E}{\mathrm{e}}
\newcommand{\eps}{\varepsilon}
\newcommand{\T}{\intercal}
\newcommand*\diff{\mathop{}\!\D}
\newcommand{\abs}[1]{\lvert#1\rvert}
\newcommand{\norm}[1]{\lVert#1\rVert}
\newcommand{\vxi}{\bm{\xi}}
\newcommand{\vk}{\bm{k}}
\newcommand{\vp}{\bm{p}}
\newcommand{\vq}{\bm{q}}
\newcommand{\vu}{\bm{u}}
\newcommand{\vv}{\bm{v}}
\newcommand{\vx}{\bm{x}}
\newcommand{\vy}{\bm{y}}
\newcommand{\vB}{\bm{B}}
\newcommand{\sR}{\mathbb{R}}
\newcommand{\sZ}{\mathbb{Z}}
\newtheorem{thm}{Theorem}
\newtheorem*{thmm}{Theorem}
\newtheorem*{defi}{Definition}
\newtheorem{prop}{Proposition}
\newtheorem{rmk}{Remark}
\title{Existence, uniqueness, and energy scaling of 2+1 dimensional  continuum model for stepped epitaxial surfaces with elastic effects}
\author{
Ganghua Fan$^{1}$\thanks{E-mail: gfanab@connect.ust.hk},
Tao Luo$^{2}$\thanks{E-mail: luotao41@sjtu.edu.cn},
Yang Xiang$^{1}$\thanks{E-mail: maxiang@ust.hk}
\\
$^{1}$ Department of Mathematics, Hong Kong University of Science and Technology, Clear Water Bay, Hong Kong\\
$^{2}$ School of Mathematical Sciences, Institute of Natural Sciences, MOE-LSC, and Qing Yuan Research Institute, Shanghai Jiao Tong University, Shanghai, 200240, P.R. China
}
\date{\today}
\begin{document}
\maketitle
\allowdisplaybreaks

\begin{abstract}
We study the 2+1 dimensional continuum model for  the evolution of stepped epitaxial surface under long-range elastic interaction proposed by Xu and Xiang (SIAM J. Appl. Math. 69, 1393–1414, 2009). The long-range interaction term and the two length scales in this model makes PDE analysis challenging. Moreover, unlike in the $1+1$ dimensional case, there is a nonconvexity contribution in the total energy in the $2+1$ dimensional case, and it is not easy to prove that the solution is always in the well-posed regime during the evolution. In this paper, we propose a modified  2+1 dimensional continuum model based on the underlying physics.
This modification fixes the problem of possible illposedness due to the nonconvexity  of the energy functional.
 We prove the existence and uniqueness of both the static and dynamic solutions and  derive a minimum energy scaling law for them. We show that the minimum energy surface profile is mainly attained by surfaces with step meandering instability. This is essentially different from the energy scaling law for the 1+1 dimensional epitaxial surfaces under elastic effects attained by step bunching surface profiles. We also discuss the transition from the step bunching instability  to the step meandering instability in 2+1 dimensions.
\end{abstract}

\keywords{Epitaxial growth, elastic effect, energy scaling law, step bunching instability, step meandering instability.}

\maketitle

\section{Introduction}
\label{sec1}
In epitaxial film growth, elasticity-driven surface morphology instabilities have been widely employed to generate self-assembled nanostructures on the film surfaces, which exhibit interesting electronic and optical properties and have various applications in semiconductor industry~\cite{Misbah2010,Politi2000instabilities}. In heterogeneous epitaxial film, the film has a different lattice constant than that of the substrate, and  the misfit strain causes  step bunching and step meandering instabilities on such a surface. It is important to understand these instability phenomena due to elastic effects for the design and fabrication of advanced materials based on the self-assembly techniques.

In practice, most semiconductor devices are fabricated on vicinal surfaces when the temperature for epitaxial growth is below the roughening transition. In this case, these surfaces consist of a succession of terraces and atomic height steps. Traditional continuum models~\cite{Asaro1972Interface,Grinfeld1986Instability,Srolovitz1989stability} that treated the surface as a continuum cannot be applied directly.
Tersoff et al. \cite{Tersoff1995Step} proposed a discrete model that describes the dynamics of each step. In their model, the elastic interactions between steps include the force dipole caused by the steps and the force monopole caused by misfit stress.  The force dipole stabilizes a uniform step train while the force monopole destabilizes it, leading to the step bunching instability. Duport et al. \cite{Duport1995New} also proposed a discrete model to account for these effects. Besides the dipole and monopole interactions, their model includes the elastic interactions between the adatoms and steps as well as the Schweobel barrier.
In 2+1 dimensions, the elastic effects also lead to step meandering instability  that competes with the bunching instability for straight steps, and these instabilities and their competitions have been examined by Tersoff and  Pehlke~\cite{Tersoff1992},  Houchmandzadeh and Misbah~\cite{Houchmandzadeh1995}, and Leonard and Tersoff~\cite{Leonard2003} using discrete models.

Xiang \cite{Xiang2002Derivation} derived a 1+1 dimensional continuum model for the stepped surfaces with elastic effects by taking the continuum limit from the discrete models~\cite{Duport1995New,Tersoff1995Step}. Instability analysis and  numerical simulations based on this continuum model performed by Xiang and E~\cite{Xiang2004Misfit} showed that this continuum model is able to correctly describe the step bunching instabilities compared with the results of discrete models and experimental observations. Xu and Xiang~\cite{Xu2009Derivation}, Zhu, Xu and Xiang~\cite{Zhu2009Continuum} further developed a 2+1 dimensional continuum model for the stepped surfaces with elastic effects, which is able to account for both the step bunching and step meandering instabilities as well as their competition. Kukta and Bhattacharya \cite{Kukta1999three} proposed a three-dimensional model for step flow mediated crystal growth under stress and terrace diffusion. There are also continuum models for the surfaces in homoepitaxy, which contain only the  force dipole elastic effect, e.g., \cite{AlHajjShehadeh2011evolution,Israeli1999,Liu2019asymmetry,Margetis2006Continuum}.

In Ref.~\cite{Luo2016Energy}, Luo et al. analyzed the step bunching phenomenon in epitaxial growth with elasticity based on the Tersoff's discrete model \cite{Tersoff1995Step}. In this work, a minimum energy scaling law for straight steps was derived and  the one bunch structure was identified. They further extended the analyses to one-dimensional discrete system with general Lennard-Jones type potential \cite{Luo2020energy} as well as one-dimensional continuum model with general Lennard-Jones type potential \cite{Luo2017Bunching}. Dal Maso et al. \cite{DalMaso2014Analytical} and Fonseca et al. \cite{Fonseca2015Regularity} proved the existence and regularity of weak solution of Xiang's continuum model \cite{Xiang2002Derivation}; However, they modified the original PDE, which includes two length scales of $O(1)$ for the overall surface profile and $O(a)$ (with $a\ll 1$ being the lattice constant) for the structure of a step bunch,  to be one of the same length scale for all contributing terms. Gao et al. \cite{Gao2017continuum} proved the first order convergence rate of a modified discrete model  to the strong solution of the limiting PDE, in which all the contributing terms are also on the same length scale. Lu \cite{Lu2018solutions} derived the existence and regularity of strong solution to the evolution equation of Xu and Xiang \cite{Xu2009Derivation} in the radial symmetry case.
All these works lead to better understandings of the elastically-driven self-organized mechanisms. However, analyses of the evolution equation of the epitaxial surfaces in 2+1 dimensions under elastic effects, such as existence, uniqueness and energy scaling laws, are still lacking.

In this paper, we prove the existence and uniqueness of both the static and dynamic solutions and  derive a minimum energy scaling law for the 2+1 dimensional continuum model proposed in \cite{Xu2009Derivation}. The long-range interaction term and the two length scales in this model present challenges for the analysis.  The nonlocal term, characterizing the long-range interaction, is noticed to be related to the $H^{1/2}$ norm.
Moreover, unlike in the $1+1$ dimensional case, there is a nonconvexity contribution (of the gradient norm of the surface height) in the local energy  in the $2+1$ dimensional case (c.f. \cref{subsec:regularization}), and although the ill-posedness associated with such nonconvexity in the continuum model is in general not in the physical regime, it is not easy to prove that the solution is always in the well-posed regime during the evolution. We propose a modified continuum model to fix the inaccurate formulation of the  energy in the small $|\nabla h|$ regime based on the underlying physics. This modification solves the problem of
possible illposedness due to the nonconvexity of the energy functional. The illposedness associated with nonconvexity in the original continuum model is in general not in the physical regime and our modification only leads to negligible change under the physically meaningful setting.
With this modification, we are able to show the convexity of the local energy. This convexity with the Fourier analysis of the nonlocal term allow us to use the direct method in the calculus of variations to show the existence and uniqueness of the energy minimizer.
With further estimation, we also prove the weak solution existence and uniqueness for the evolution equation.

We also obtain a minimum energy scaling law for the 2+1 dimensional epitaxial surfaces under elastic effects. It turns out that the minimum energy surface profile is attained by surfaces with step meandering instability.  This is essentially different from the energy scaling law for the 1+1 dimensional epitaxial surfaces under elastic effects~\cite{Luo2017Bunching}, which is attained by step bunching surface profiles. We also discuss the transition from the step bunching instability  to the step meandering instability in 2+1 dimensions.

The rest of this paper is organized as follows. In \cref{sec2}, we propose a modified continuum model to fix the inaccurate formulation of the  energy in the small $|\nabla h|$ regime based on the underlying physics,
  and introduce a new form of the evolution equation  used in the proofs based on the 2+1 dimensional continuum model derived in \cite{Xu2009Derivation}. In \cref{sec:main}, we state the main analysis results. The existence and uniqueness of the energy minimizer and the weak solution of the evolution equation are shown in \cref{sec3} and \cref{sec4}, respectively. In \cref{sec5}, we prove a minimum energy scaling law in the 2+1 dimensions, and discuss the competition between the step meandering and step bunching instabilities. Conclusions are given in \cref{sec6}. Comparisons in linear instability analysis and numerical simulation by using continuum models with and without the modification are given in Appendix~\ref{appendix:regularization}.

\section{The modified continuum model}
\label{sec2}
In this section, we first briefly review the 2+1 dimensional continuum model for the evolution of stepped epitaxial surfaces under elastic effects obtained in Ref.~\cite{Xu2009Derivation}. We then present a modified form of the energy and accordingly the evolution equation to fix the inaccurate formulation of the  energy in the small $|\nabla h|$ regime based on the underlying physics. This modification eliminates the
possible illposedness due to the nonconvexity (in terms of the gradient of the surface) of the energy functional.
Finally, in order to employ the gradient flow framework in the proofs,
we also introduce an equivalent evolution equation using a new variable instead of original one using  surface height.

\subsection{The original continuum model}
We introduce the original continuum model in $2+1$ dimensions for the evolution of stepped epitaxial surfaces under elastic effects obtained in Ref.~\cite{Xu2009Derivation}. Let $h(\vx)$, $\vx=(x_1,x_2)\in\sR^2$, be the height of the epitaxial surface.
The total energy $E[h]$ consists of three parts: the step line energy $E_\mathrm{l}[h]$, the elastic energy due to force dipole $E_\mathrm{d}[h]$, and the misfit elastic energy $E_\mathrm{m}[h]$. That is,
\begin{equation}
E[h]=E_\mathrm{l}[h]+E_\mathrm{d}[h]+E_\mathrm{m}[h].
\end{equation}
Here
\begin{flalign}
    E_\mathrm{l}[h]
    &= \int_{\sR^2} g_1\abs{\nabla h}\diff{\vx},\\
    E_\mathrm{d}[h]
    &= \int_{\sR^2}\frac{g_3}{3}\abs{\nabla h}^3\diff{\vx},
\end{flalign}
where $g_1$ is the step line energy density, $g_3$ is the strength of the force dipole interaction.

In  heteroepitaxial growth, the lattice misfit  $\epsilon_0$ due to the different lattice constants of the film and the substrate generates a constant misfit stress $\sigma_{11}=\sigma_{22}=\sigma_0=\frac{2G(1+\nu)\epsilon_0}{1-\nu}$ in the film, resulting in the misfit energy
\begin{flalign}\label{eqn:misfitenergy}
E_\mathrm{m}[h]
  =&-\frac{(1-\nu)\sigma_0^2}{4\pi G}\int_{\sR^2} h(\vx)\left[\int_{\sR^2}\frac{\vx-\vy}{\abs{\vx-\vy}^3}\cdot\nabla h(\vy)\diff{\vy}\right]\diff{\vx} \nonumber \\
  &+\frac{(1-\nu)\sigma_0^2 a}{2\pi G}\int_{\sR^2}\abs{\nabla h}\log\frac{2\pi r_c\abs{\nabla h}}{\E a}\diff{\vx},
\end{flalign}
where $G$ is the shear modulus, $\nu$ is the Poisson ratio, $r_c$ is a parameter of order of the size of the core of the step. The first term in $E_\mathrm{m}[h]$ is the traditional expression of the misfit elastic energy above the roughening transition temperature~\cite{Asaro1972Interface,Grinfeld1986Instability,Pimpinelli1998Physics,Srolovitz1989stability}. The second term in $E_\mathrm{m}[h]$ is the contribution to the step line energy incorporating the atomic feature of the stepped surfaces~\cite{Xiang2002Derivation,Xiang2004Misfit,Xu2009Derivation,Zhu2009Continuum}.

Evolution of the epitaxial surface satisfies
\begin{align}\label{eqn:evolution}
  h_t=&\nabla\cdot\left(D\nabla\mu\right),\\
  \mu=&\frac{\delta E[h]}{\delta h},
\end{align}
where $D$ is the mobility constant, $\mu$ is the chemical potential associated with the total energy. Without loss of generality, we set the $D=1$ in this paper.
By direct calculation, the chemical potential $\mu$ is
\begin{flalign}
  \mu[h]=&\frac{\delta E_\mathrm{l}[h]}{\delta h}+\frac{\delta E_\mathrm{d}[h]}{\delta h}+\frac{\delta E_\mathrm{m}[h]}{\delta h} \nonumber \\
  =& -g_1\nabla\cdot\left(\frac{\nabla h}{\abs{\nabla h}}\right)-g_3\nabla\cdot\left(\abs{\nabla h}\nabla h\right)-\frac{(1-\nu)\sigma_0^2}{2\pi G}\int_{\sR^2}\frac{\vx-\vy}{\abs{\vx-\vy}^3}\cdot\nabla h(\vy)\diff{\vy} \nonumber \\
  &-\frac{(1-\nu)\sigma_0^2 a}{2\pi G}\left[\nabla\cdot\left(\frac{\nabla h}{\abs{\nabla h}}\right)\log\frac{2\pi r_c\abs{\nabla h}}{a}+\frac{(\nabla h)^\T (\nabla\nabla h) \nabla h}{\abs{\nabla h}^3}\right], \label{eqn:mu_m}
\end{flalign}
where $(\nabla h)^\T (\nabla\nabla h) \nabla h=h^2_{x_1}h_{x_1x_1}+2h_{x_1} h_{x_2} h_{x_1x_2}+h^2_{x_2} h_{x_2x_2}$ in \cref{eqn:mu_m}.

\begin{rmk}\label{rmk..refernce}
We have another formulation of the misfit energy:
\begin{align}\label{eq..misfitenergytildeh}
     E_\mathrm{m}[h]
  =& -\frac{(1-\nu)\sigma_0^2}{4\pi G}\int_{\sR^2} \tilde{h}(\vx)\left[\int_{\sR^2}\frac{\vx-\vy}{\abs{\vx-\vy}^3}\cdot\nabla \tilde{h}(\vy)\diff{\vy}\right]\diff{\vx} \nonumber\\
  &+\frac{(1-\nu)\sigma_0^2 a}{2\pi G}\int_{\sR^2}\abs{\nabla h}\log\frac{2\pi r_c\abs{\nabla h}}{\E a}\diff{\vx},
\end{align}
where
\begin{equation}
\tilde{h}(\vx):=h(\vx)-\vB^\T\vx
\end{equation}
 is the deviation to the reference plane $\vB^\T\vx$ with $\vB$ being the average gradient of height function. The contribution to the chemical potential $\frac{\delta E_\mathrm{m}[h]}{\delta h}$ remains the same.
\end{rmk}

In this paper, we focus on the periodic setting, and consider the total energy on the periodic cell $\Omega=[0, L]^2$. For non-negative integer $k$, we denote $W^{k,p}_\#(\Omega)$ the Sobolev space of functions whose distributional derivatives up to order $k$ are $\Omega$-periodic and in the space $L^p(\Omega)$. In particular, we also write $H^{k}_\#(\Omega)=W^{k,2}_\#(\Omega)$.

Define the Hilbert space
\begin{equation}
    V:=\left\{\tilde{h}\in H^1_\#(\Omega)\mid \int_{\Omega}\tilde{h}(\vx)\diff{\vx}=0\right\}.
\end{equation}
The solution space is defined as
\begin{equation}
    X:= \left\{h\in H^1(\sR^2)\mid\tilde{h}(\vx):=h(\vx)-\vB^\T\vx\in V\right\}.
\end{equation}
Using the semi-norm on $\Omega$
\begin{equation}
    [\tilde{h}]_{H^{1/2}(\Omega)}
    := \left(\sum\limits_{\vk\in\sZ^2}\abs{\vk}\abs{h_{\vk}}^2\right)^{1/2}
\end{equation}
with $h_{\vk}$ being the Fourier coefficient of $\tilde{h}$, the double-integral term in $E_\mathrm{m}[h]$ in Eq.~\eqref{eq..misfitenergytildeh} (or Eq.~\eqref{eqn:misfitenergy}) over one periodic domain $\Omega$ can be written in terms of the semi-norm $H^{\frac{1}{2}}$ as
\begin{equation}\label{nonlocal-energy-1}
\frac{(1-\nu)\sigma_0^2}{4\pi G}\int_{\Omega} h(\vx)\left[\int_{\sR^2}\frac{\vx-\vy}{\abs{\vx-\vy}^3}\cdot\nabla h(\vy)\diff{\vy}\right]\diff{\vx}=\frac{(1-\nu)\sigma_0^2\pi}{ G} L[\tilde{h}]_{H^{1/2}(\Omega)}^2.
\end{equation}

Let $c_1=\frac{(1-\nu)\sigma_0^2}{2\pi G}$, $ac_2=g_1+ac_1\log\frac{2\pi r_c}{\E a}$ and $c_3 a=\frac{g_3}{3}$. Note that $c_1, c_3>0$. The total energy thus can be expressed as
$$  E[h]
   = -2c_1\pi^2L[\tilde{h}]_{H^{1/2}(\Omega)}^2+a \int_{\Omega}(c_1|\nabla h|\log|\nabla h|+c_2|\nabla h|+c_3|\nabla h|^3)\diff{\vx}.$$
We write it as
\begin{flalign}\label{eq..EnergyPeriodic}
  E[h]
   = -2c_1\pi^2L[\tilde{h}]_{H^{1/2}(\Omega)}^2+\int_{\Omega}\Psi_0(\nabla h)\diff{\vx},
   \end{flalign}
where
\begin{flalign}\label{eq..energylocal}
  \Psi_0(\vp) =    ac_1\abs{\vp}\log{\abs{\vp}}+ac_2\abs{\vp}+ac_3\abs{\vp}^3.
\end{flalign}
Here $\Psi_0(\nabla h)$ is the local energy density.

\subsection{Modified continuum model}\label{subsec:regularization}

Consider the local energy density $\Psi_0(\nabla h)$ in Eqs.~\eqref{eq..EnergyPeriodic} and \eqref{eq..energylocal}. As shown in \cref{prop..Convexity} and illustrated in \cref{fig..modifytoconvex}(a), $\Psi_0(\nabla h)$ is not convex when $\nabla h$ is small.
If we consider $\Psi_0(\nabla h)$ as the density of a generalized step line energy:
\begin{equation}\label{eqn:g-line energy}
\Psi_0(\nabla h)=a(c_1\log|\nabla h|+c_2+c_3|\nabla h|^2)|\nabla h|,
\end{equation}
it is negative when $\nabla h$ is small. Here $|\nabla h|$ is the step density and the prefactor of it in the above formula is considered as a generalized step line energy.  As a result, the evolution equation is not well-posed  when $\nabla h$ is small. In fact,
when $\nabla h$ is very small, the leading order contribution in the chemical potential $\mu$ in Eq.~\eqref{eqn:mu_m} is the $\log |\nabla h|$ term, and the leading order of the evolution equation \eqref{eqn:evolution} is
\begin{equation}
h_t=\frac{D(1-\nu)\sigma_0^2 a}{2\pi G}\Delta\left(\nabla\cdot\left(\frac{\nabla h}{\abs{\nabla h}}\right)\log\frac{1}{\abs{\nabla h}}\right),
\end{equation}
which is ill-posed  (of $h_t=\Delta^2h$ type).

Recall that the $2+1$ dimensional continuum model was derived from discrete step model by asymptotic analysis (coarse graining) for vicinal surfaces that consists of a series of monotonic steps~\cite{Xu2009Derivation}. There is no such negative step line energy and the associated instability (corresponding to the illposedness in the continuum model) in the discrete model.
In fact, almost all such continuum models for stepped surfaces (e.g. those reviewed in the introduction section) are based on the approximation that inter-step distance in the continuum model is given by $a/|\nabla h|$.
Physically, such distance cannot exceed domain size $L$, i.e., we should have $a/|\nabla h|\leq L$ or $|\nabla h|\geq a/L$. However, once the continuum is established, there is no guarantee that $|\nabla h|$ is always bounded below by this positive constant during the evolution.
On the other hand,
when the inter-step distance in the discrete model is large, although the continuum model still provides a way to connect the neighboring steps smoothly,  the smooth surface profile $h$ may vary due to the continuum equation over the large terrace region between such neighboring steps, the approximation of inter-step distance by $a/|\nabla h|$ may fail over such a region, i.e., $a/|\nabla h|$ no longer means inter-step distance in this case. It should be fine if such a smooth connection in the continuum model does not introduce irrelevant physical effects. Unfortunately, this is not the case here because very small $\nabla h$ leads to illposedness of the continuum model as demonstrated above.

A straightforward idea to fix this problem is to show that $|\nabla h|$ is always bounded below by some positive constant during the evolution by the continuum model. However, such analysis is challenging here, especially due to the nonlocal term in the evolution equation.
In principle, the illposedness in the continuum model can also be fixed by keeping more terms in the asymptotic expansion from the discrete model. However, this will make the continuum model much more complicated.
The loss of accuracy of continuum models based on approximating inter-step distance by $a/|\nabla h|$ near the region $\nabla h=\mathbf 0$ has been noticed based on stepped surface in homoexpitaxy (i.e., without the long-range step interaction due to misfit) \cite{Israeli1999,E2001}. In \cite{Israeli1999}, the large terrace regions are separated from the vicinal regions described by the continuum model. In \cite{E2001}, it was proposed that dynamics of the tops and peaks of the surface where $h_x=0$ are governed separately as boundary conditions in the continuum model.

In order to fix this illposedness in the continuum model, we introduce a modified energy in the regime of small $|\nabla h|$ where $a/|\nabla h|$ no longer provides an approximation to the inter-step distance. As discussed above, the contribution of $\log|\nabla h|$ in the local step energy in Eq.~\eqref{eqn:g-line energy} takes a large negative value in this small $|\nabla h|$ limit. In fact, this energy contribution is an approximation of $-\log (l_t/a)$ in the discrete model where $l_t$ is the local inter-step distance.
Recall that the long-range step interaction energy due to misfit for a pair of steps separated by $l_t$ is proportional to $\log(l_t/a)$ (i.e., the interaction force is proportional to $a/l_t$) in the discrete model \cite{Tersoff1995Step,Duport1995New,Xiang2002Derivation,Xiang2004Misfit}.
That is, the illposedness or the negative local step energy of the continuum model in the small  $|\nabla h|$ regime is not physical, and it comes from the inappropriate continuum approximation of the inter-step distance in this regime in the local energy formula. We correct the local step energy by using $\log(|\nabla h|+\gamma_0)$ instead of $\log|\nabla h|$ in the continuum model, where $\gamma_0>0$ and $a/\gamma_0$ is a cutoff inter-step distance. The physical meaning of this modification by regularization is that when $|\nabla h|$ is too small and $a/|\nabla h|$ is not an approximation of the inter-step distance, we use a cutoff distance $a/\gamma_0$ to approximate the dimensionless inter-step distance in the continuum model.
The modified energy is:
\begin{align}\label{eq..ModifiedEnergy}
    E[h]
  &= -2c_1\pi^2L[\tilde{h}]_{H^{1/2}(\Omega)}^2+\int_{\Omega}\Psi(\nabla h)\diff{\vx},\\
  \Psi(\vp)
    &=ac_1\abs{\vp}\log(\abs{\vp}+\gamma_0)+ac_2\abs{\vp}+ac_3\abs{\vp}^3, \quad \vp\in\sR^2.\label{eq..phi}
\end{align}
The cutoff parameter $\gamma_0$ means that the maximum inter-step distance allowed in the continuum model is $a/\gamma_0$.
In practice,  the value of $\gamma_0$ can be chosen as a small positive constant that fixes the illposedness problem in the small $|\nabla h|$ regime, and in the meantime, only generates small error when $|\nabla h|$ is not that small.
Note that we do not need to modify other $\nabla h$ terms because they do not lead to physically irrelevant behavior in the small $|\nabla h|$ regime.

\begin{figure}[htbp]
\centering
	\subfigure[]{\includegraphics[width=0.49\textwidth]{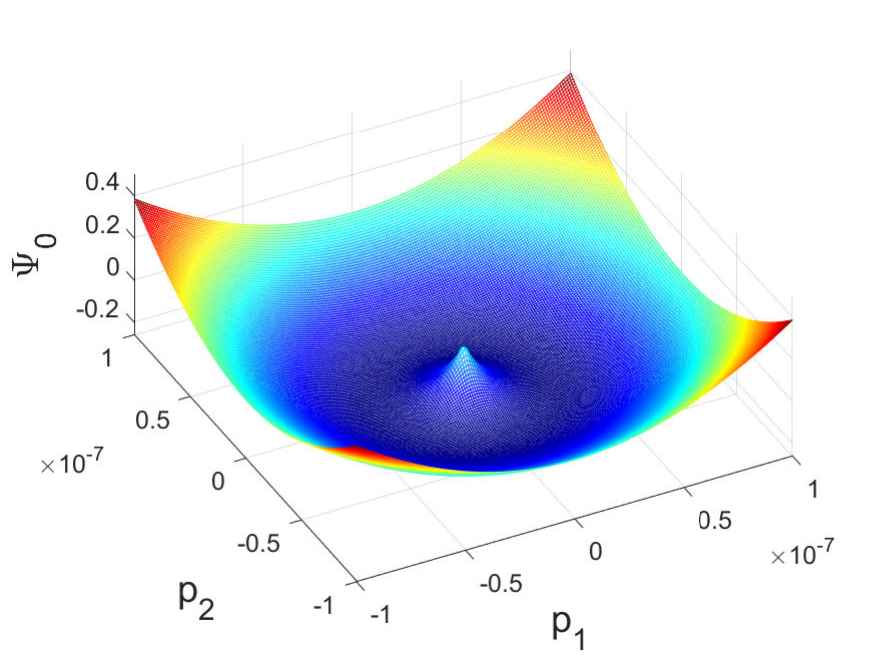}}
	\subfigure[]{\includegraphics[width=0.49\textwidth]{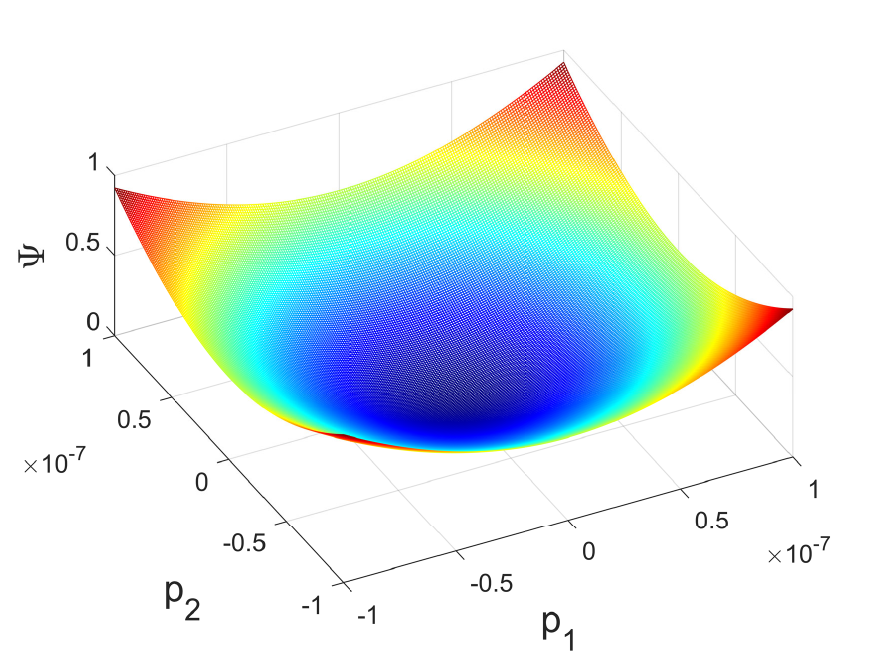}}
\caption{ (a) The original local energy density $\Psi_0(\vp)$ in the continuum model is non-convex. (b) The modified local energy density $\Psi(\vp)$ is convex under the condition \eqref{eqn:gamma0}. See \cref{paravalue} for values of parameters.}
\label{fig..modifytoconvex}
\end{figure}

We state that the modified local energy density $\Psi(\vp)$ is convex on the entire plane if the cutoff parameter $\gamma_0$ satisfies
$\gamma_0\geq \exp(-\frac{c_2}{c_1})$,
 while the original local energy density $\Psi_0(\vp)$ is not convex; see the illustration in \cref{fig..modifytoconvex} and the proof in \cref{prop..Convexity} below.  Moreover, under the condition $\gamma_0\geq \exp(-\frac{c_2}{c_1})$, we have  $\Psi(\vp)\geq a(c_1\log\gamma_0+c_2+c_3\abs{\vp}^2)\abs{\vp}\geq ac_3\abs{\vp}^3$, i.e., the generalized step line energy is always positive with the modified local energy.
It is easy for $\gamma_0$ to satisfy the condition $\gamma_0\geq \exp(-\frac{c_2}{c_1})$ because $\exp(-\frac{c_2}{c_1})$ in the physically meaningful regime is a very small number; see the calculation  in the remark below.
A possible choice of $\gamma_0$ is
\begin{equation}\label{eqn:gamma0}
 \gamma_0= \exp(-\frac{c_2}{c_1}),
 \end{equation}
  which is able to meet all these requirements. We use this value of $\gamma_0$ in this paper.
With this small value of  $\gamma_0$,
the difference between the modified local energy and original one is very small, and there is no significant change in the evolution of the surface; see the  comparisons in linear instability analysis and numerical simulation by using continuum models with and without this modification given in Appendix~\ref{appendix:regularization}.
This modification also does not affect the asymptotic behavior of the solution obtained in \cref{sec5}.

\begin{rmk}\label{paravalue}
  In practice, crystal grows on the vicinal surface which is cut at a small angle to the crystalline plane. For example, $g_1=\SI{0.03}{\joule\per \meter^2}$, $g_3=\SI{8.58}{\joule\per \meter^2}$, $a=\SI{0.27}{\nano\meter}$, $\nu=0.25$, $G=\SI{3.8e10}{\pascal}$, $r_c=a$, and $\eps_0=0.012$. In this case, $\sigma_0=\frac{2G(1+\nu)\eps_0}{1-\nu}=0.04G$, $c_1=\frac{(1-\nu)\sigma_0^2}{2\pi G}=\SI{7.2575e6}{\pascal}$, $c_2=\frac{g_1}{a}+c_1\log\frac{2\pi r_c}{\E a}=\SI{1.1719e8}{\pascal}$. Thus
  $\gamma_0=\exp(-\frac{c_2}{c_1})=\SI{9.7109e-8}{}$. Note that a typical miscut angle of the vicinal surface is of a few degree, and when the miscut angle $\theta=1^\circ$, the average slope of the surface $|\nabla h|=\tan\theta=\SI{1.75e-2}{}\gg \gamma_0$. Therefore, the modification with $\gamma_0$ only leads to a very small change in the  local energy.
\end{rmk}

\begin{rmk}
The modification that we made above in the continuum model is to fix the inaccurate formulation of the local energy in the small $|\nabla h|$ regime, which is based on the underlying physics. In its form, it is similar to a regularization of the logarithm term in the local energy.
Our modification is different from those regularizations in \cite{Gao2020analysis, Gao2019global, Gao2017weak}, whose purpose is to provide mathematical tools to overcome the challenges (e.g., singularities \cite{Gao2019global} and degeneracy \cite{Gao2017weak,Gao2020analysis}) when proving properties of the solutions, but these regularizations do not affect the physical properties.
\end{rmk}

\begin{rmk}
Note that unlike the 2+1 dimensional problem being considered in this paper, in the 1+1 dimensional continuum model \cite{Xiang2002Derivation,Xiang2004Misfit}, the corresponding local energy $\Psi_0(h_x)=ac_1h_x\log{h_x}+ac_2h_x+ac_3h_x^3$ is always convex for $h_x\ge 0$, since $\Psi_0''(h_x)=\frac{ac_1}{h_x}+6ac_3h_x\ge2a\sqrt{6c_1c_3}\ge 0$. There is no such nonconvexity and illposedness when the $1+1$ dimensional continuum model was analyzed \cite{DalMaso2014Analytical,Fonseca2015Regularity,Luo2017Bunching,Lu2018solutions}.
\end{rmk}

Convexity of the modified local energy $\Psi(\vp)$ and nonconvexity of the original local energy $\Psi_0(\vp)$ are shown in the following proposition:
\begin{prop}[Convexity]\label{prop..Convexity}
  The function $\Psi(\vp)$ given by Eq.~\eqref{eq..phi}  is convex on $\sR^2$ if $\gamma_0\geq \exp(-\frac{c_2}{c_1})$, and $\Psi_0(\vp)$ given by Eq.~\eqref{eq..energylocal}  is not convex on $\sR^2$.
\end{prop}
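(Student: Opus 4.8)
The plan is to reduce the convexity question for $\Psi$ to a one–dimensional calculation, exploiting the fact that $\Psi(\vp)=\psi(|\vp|)$ is radial. For a $C^2$ radial function $\psi(r)$ on $\sR^2$, convexity is equivalent to the two conditions $\psi''(r)\ge 0$ (convexity along rays) and $\psi'(r)/r\ge 0$ for $r>0$ (nonnegativity of the tangential second derivative, i.e.\ the Hessian in the direction orthogonal to $\vp$ has eigenvalue $\psi'(r)/r$). So first I would record $\psi(r)=ac_1 r\log(r+\gamma_0)+ac_2 r+ac_3 r^3$, compute
\[
\psi'(r)=ac_1\log(r+\gamma_0)+\frac{ac_1 r}{r+\gamma_0}+ac_2+3ac_3 r^2,
\]
\[
\psi''(r)=\frac{ac_1}{r+\gamma_0}+\frac{ac_1\gamma_0}{(r+\gamma_0)^2}+6ac_3 r
=\frac{ac_1(r+2\gamma_0)}{(r+\gamma_0)^2}+6ac_3 r,
\]
and observe $\psi''(r)\ge 0$ for all $r\ge 0$ since every term is nonnegative (using $c_1,c_3>0$). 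This disposes of the radial direction essentially for free.

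The substantive step is the tangential condition $\psi'(r)\ge 0$ for all $r>0$. Here is where the specific choice $\gamma_0=\exp(-c_2/c_1)$ enters. I would show $\psi'$ attains its minimum over $r\ge 0$ and that the minimum is nonnegative. Note $\psi'(r)\ge ac_1\log(r+\gamma_0)+ac_2 = ac_1\big(\log(r+\gamma_0)+c_2/c_1\big)=ac_1\log\big((r+\gamma_0)/\gamma_0\big)\ge 0$ for $r\ge 0$, because we dropped only the nonnegative terms $\frac{ac_1 r}{r+\gamma_0}$ and $3ac_3 r^2$, and $(r+\gamma_0)/\gamma_0\ge 1$. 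Thus $\psi'(r)\ge 0$ everywhere and in fact $\psi'(0^+)=ac_1\log\gamma_0+ac_2=0$, so the bound is sharp at the origin. Combined with $\psi''\ge 0$, the standard characterization of convexity of radial functions gives that $\Psi$ is convex on $\sR^2$; near the origin one should note $\psi$ is continuous with $\psi(0)=0$ and $\psi'(0^+)$ finite, so no issue arises from the apparent singularity of $\zeta$.

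For the second assertion, that $\Psi_0(\vp)=ac_1|\vp|\log|\vp|+ac_2|\vp|+ac_3|\vp|^3$ is \emph{not} convex, I would exhibit a single ray along which $\Psi_0$ fails to be convex, or equivalently use the tangential criterion: $\Psi_0$ corresponds to $\psi_0(r)=ac_1 r\log r+ac_2 r+ac_3 r^3$ with $\psi_0'(r)=ac_1\log r+ac_1+ac_2+3ac_3 r^2\to -\infty$ as $r\to 0^+$. Hence $\psi_0'(r)<0$ for all sufficiently small $r>0$, so the tangential eigenvalue $\psi_0'(r)/r$ of the Hessian is negative there, and $\Psi_0$ is not convex in any neighborhood of the origin. (Alternatively one can just pick an explicit small $r_0$ and check $\psi_0'(r_0)<0$ using the parameter values in \cref{paravalue}.)

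The only mild obstacle is bookkeeping at $r=0$: $\zeta$ and $\psi'$ must be interpreted as one-sided limits, and one should confirm $\psi$ is genuinely $C^1$ up to the origin with $\psi(0)=0$ (it is, since $r\log(r+\gamma_0)\to 0$), so that the radial-convexity characterization applies on all of $\sR^2$ rather than only on $\sR^2\setminus\{0\}$; convexity then extends across the origin by continuity. No other step presents difficulty.
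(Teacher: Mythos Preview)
Your argument is correct, and it takes a cleaner route than the paper. The paper computes the full Cartesian Hessian $\nabla\nabla\Psi$ and verifies positive semidefiniteness by checking $\partial_{p_ip_i}\Psi\ge 0$ and $\det(\nabla\nabla\Psi)\ge 0$ directly, which yields several multi-term expressions that must be inspected by hand. You instead exploit the radial structure $\Psi(\vp)=\psi(|\vp|)$ and reduce everything to two one-variable inequalities, $\psi''(r)\ge 0$ and $\psi'(r)\ge 0$; the second is exactly where the choice $\gamma_0=\exp(-c_2/c_1)$ enters, and you isolate that cleanly. Your nonconvexity argument for $\Psi_0$ via $\psi_0'(r)\to-\infty$ is likewise the radial version of the paper's computation of $\partial_{p_1p_1}\Psi_0$ at $p_1=0$ (these are literally the same eigenvalue). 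One minor suggestion: rather than invoking the Hessian eigenvalue characterization and then patching at the origin, you could simply note that $\psi$ is convex and nondecreasing on $[0,\infty)$ (from $\psi''\ge 0$ and $\psi'\ge 0$), so $\Psi=\psi\circ|\cdot|$ is a nondecreasing convex function composed with the convex norm, hence convex on all of $\sR^2$ with no special treatment of $\vp=0$ needed.
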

\begin{proof}
We first show that $\Psi(\vp)$ is convex.

  (1) Computing the Hessian $\nabla\nabla  \Psi$.

  On $\sR^2$, direct calculations of the derivatives of $\Psi$ lead to
  \begin{align*}
    \frac{\partial_{p_1 p_1}\Psi(\vp)}{a}
    =& \frac{c_1p_2^2}{\abs{\vp}^3}\log(\abs{\vp}+\gamma_0)+\frac{c_1}{\abs{\vp}+\gamma_0}+\frac{c_1\gamma_0p_1^2}{\abs{\vp}^2(\abs{\vp}+\gamma_0)^2}+\frac{c_2p_2^2}{\abs{\vp}^3}+3c_3\abs{\vp}+\frac{3c_3p_1^2}{\abs{\vp}},\\
    \frac{\partial_{p_1 p_2}\Psi(\vp)}{a}
    =& -\frac{c_1p_1p_2}{\abs{\vp}^3}\log(\abs{\vp}+\gamma_0)+\frac{c_1\gamma_0p_1p_2}{\abs{\vp}^2(\abs{\vp}+\gamma_0)^2}-\frac{c_2p_1p_2}{\abs{\vp}^3}+\frac{3c_3p_1p_2}{\abs{\vp}},\\
    \frac{\partial_{p_2 p_2}\Psi(\vp)}{a}
    =& \frac{c_1p_1^2}{\abs{\vp}^3}\log(\abs{\vp}+\gamma_0)+\frac{c_1}{\abs{\vp}+\gamma_0}+\frac{c_1\gamma_0p_2^2}{\abs{\vp}^2(\abs{\vp}+\gamma_0)^2}+\frac{c_2p_1^2}{\abs{\vp}^3}+3c_3\abs{\vp}+\frac{3c_3p_2^2}{\abs{\vp}}.
  \end{align*}

  (2) $\partial_{p_1p_1}\Psi\geq 0$ and $\partial_{p_2p_2}\Psi\geq 0$.

  When $\gamma_0\geq\exp(-\frac{c_2}{c_1})$,
  \begin{equation*}
     c_1\log(\abs{\vp}+\gamma_0)+c_2\geq 0, \quad i=1,2.
  \end{equation*}
  Therefore $\partial_{p_1p_1}\Psi\geq 0$ and $\partial_{p_2p_2}\Psi\geq 0$.

  (3) $\det\left(\nabla\nabla  \Psi\right)\geq 0$.
  \begin{align*}
    a^{-2}\det\left(\nabla\nabla  \Psi\right)
    =& a^{-2}\partial_{p_1 p_1}\Psi(\vp)\partial_{p_2 p_2}\Psi(\vp)-a^{-2}(\partial_{p_1 p_2}\Psi(\vp))^2\\
    =& \frac{c_1\gamma_0}{\abs{\vp}(\abs{\vp}+\gamma_0)^2}\left(c_1\log(\abs{\vp}+\gamma_0)+c_2\right)+6c_3\left(c_1\log(\abs{\vp}+\gamma_0)+c_2\right)\\
    &+\frac{c_1}{\abs{\vp}(\abs{\vp}+\gamma_0)}\left(c_1\log(\abs{\vp}+\gamma_0)+c_2\right)+\frac{c_1^2\gamma_0}{(\abs{\vp}+\gamma_0)^3}+\frac{3c_1c_3\gamma_0\abs{\vp}}{(\abs{\vp}+\gamma_0)^2}\\
    &+\frac{9c_1c_3\abs{\vp}}{\abs{\vp}+\gamma_0}+18c_3^2\abs{\vp}^2+\frac{c_1^2}{(\abs{\vp}+\gamma_0)^2}\\
    \geq& 0.
  \end{align*}

By (1)-(3),  $\Psi(\vp)$ is convex.

For $\Psi_0(\vp)$, we have
  \begin{align*}
    \frac{\partial_{p_1 p_1}\Psi_0(\vp)}{a}
    =& \frac{c_1p_2^2}{\abs{\vp}^3}\log\abs{\vp}+\frac{c_1}{\abs{\vp}}
    +\frac{c_2p_2^2}{\abs{\vp}^3}+3c_3\abs{\vp}+\frac{3c_3p_1^2}{\abs{\vp}}.
      \end{align*}
When $p_1=0$,
  \begin{align*}
    \frac{\partial_{p_1 p_1}\Psi_0(\vp)}{a}
    =& \frac{c_1}{\abs{p_2}}\log\abs{p_2}+\frac{c_1+c_2}{\abs{p_2}}
    +3c_3\abs{p_2},
      \end{align*}
which is negative when $p_2$ is small enough. Therefore, $\Psi_0(\vp)$ is not always convex.
\end{proof}

With the modified energy \eqref{eq..ModifiedEnergy}, the modified evolution equation is:
\begin{flalign}\label{eqn:evolution-m}
  h_t=&\nabla\cdot\left(D\nabla\mu\right),\\
  \mu=& -g_1\nabla\cdot\left(\frac{\nabla h}{\abs{\nabla h}}\right)-g_3\nabla\cdot\left(\abs{\nabla h}\nabla h\right)-\frac{(1-\nu)\sigma_0^2}{2\pi G}\int_{\sR^2}\frac{\vx-\vy}{\abs{\vx-\vy}^3}\cdot\nabla h(\vy)\diff{\vy} \nonumber \\
  &-\frac{(1-\nu)\sigma_0^2 a}{2\pi G}\left[\nabla\cdot\left(\frac{\nabla h}{\abs{\nabla h}}\right)\log\frac{2\pi r_c(\abs{\nabla h}+\gamma_0)}{a}+\frac{(\nabla h)^\T (\nabla\nabla h) \nabla h}{\abs{\nabla h}^2(\abs{\nabla h}+\gamma_0)}\right.\nonumber\\
  &\left.-\nabla\cdot\left( \frac{\gamma_0\nabla h}{\abs{\nabla h}(\abs{\nabla h}+\gamma_0)} \right)\right].\label{eqn:mu-m1}
\end{flalign}
We write the evolution equation as
\begin{flalign}\label{eq..evolution_equation}
    h_t
    =& -\Delta\left\{c_1\int_{\sR^2}\frac{\vx-\vy}{\abs{\vx-\vy}^3}\cdot\nabla h(\vy)\diff{\vy}+\nabla\cdot\zeta(\nabla h)
    \right\},\vspace{1ex}\\
    \zeta(\vp)=&\frac{ac_1\vp}{\abs{\vp}}\log (\abs{\vp}+\gamma_0)+\frac{ac_1\vp}{\abs{\vp}+\gamma_0} +\frac{ac_2\vp}{\abs{\vp}}+3ac_3\abs{\vp}\vp. \label{eq..zeta}
\end{flalign}

\subsection{$L^2$-gradient flow equation}
We will show the solution existence and uniqueness for an equivalent evolution equation with a vector-valued function $\vu:\Omega\times[0,+\infty)\to \sR^2$ rather than directly working on the equation of $h$. More precisely, we set
\begin{equation}\label{eq..handu}
h(\vx,t)=\nabla\cdot\vu(\vx,t)+\vB^{\T}\vx
\end{equation}
for all $\vx$ and $t$. Here $\vB^{\T}\vx$ is the reference plane mentioned in \cref{rmk..refernce}.
Note that  for a given $h$, such $\vu$ is unique up to a divergence free function.
The evolution equation of $h$ \eqref{eq..evolution_equation} is transformed equivalently to the evolution equation of $\vu$:
\begin{align}\label{eq..EvolutionEq.u}
    &\vu_t
    = -\nabla\left\{c_1\int_{\sR^2}\frac{\vx-\vy}{\abs{\vx-\vy}^3}\cdot \nabla \nabla\cdot\vu(\vy)\diff{\vy}
    +\nabla\cdot\zeta(\nabla \nabla\cdot\vu+\vB)\right\},
\end{align}
where $\zeta(\vp)$ is defined in Eq.~\eqref{eq..zeta}.

\begin{rmk}
    We take the initial value problem as an example to show the equivalence of the evolution equations of $h$ and $\vu$.
    From the relation between $h$ and $\vu$ in Eq.~\eqref{eq..handu}, we can take $\vu(x_1, x_2, t)$ as:
    \begin{equation*}
        \vu(x_1,x_2,t)={
        \left( \begin{array}{c}
        \displaystyle\frac12\int_0^{x_1}[\tilde{h}(s,x_2,t)-m_1(x_2,t)]\diff{s}+\frac{1}{2L}\int_0^L\int_0^{x_1}\tilde{h}(s,x_2,t)\diff{s}\diff{x_2}-n_1\\
        \displaystyle\frac12\int_0^{x_2}[\tilde{h}(x_1,s,t)-m_2(x_1,t)]\diff{s}+\frac{1}{2L}\int_0^L\int_0^{x_2}\tilde{h}(x_1,s,t)\diff{s}\diff{x_1}-n_2
        \end{array}
        \right )},
    \end{equation*}
    where $m_1(x_2,t)=\frac{1}{L}\int_0^L\tilde{h}(x_1,x_2,t)\diff{x_1}$ and $m_2(x_2,t)=\frac{1}{L}\int_0^L\tilde{h}(x_1,x_2,t)\diff{x_2}$
    guarantee the periodicity of $\vu$ on $\Omega$, $n_i$ are constants that adjust the integral value of $\vu$, for $i=1,2$. Then, for a given initial value $h(x_1,x_2,0)$, we obtain $\vu(x_1,x_2,0)$ by substituting $h(x_1,x_2,0)$ to the above form. And for a given initial value $\vu(x_1,x_2,0)$, we obtain $h(x_1,x_2,0)$ from \eqref{eq..handu}.
\end{rmk}

The total energy associated with $\vu$ is
\begin{equation}\label{eq..Energy.u}
    F[\vu]
    = -\frac{c_1}{2}\int_\Omega \nabla\cdot \vu(\vx)\int_{\sR^2}\frac{\vx-\vy}{\abs{\vx-\vy}^3}\cdot\nabla\nabla\cdot \vu(\vy)\diff{\vy}\diff{\vx}+\int_{\Omega}\Psi(\nabla\nabla\cdot\vu+\vB)\diff{\vx}.
\end{equation}
Note that $F[\vu]=E[h]$ and the evolution equation of $\vu$ is $L^2$-gradient flow:
\begin{equation}\label{eq..L2u}
    \vu_t=-\frac{\delta F}{\delta \vu}.
\end{equation}

    The evolution equation of profile height \eqref{eq..evolution_equation} is a $H^{-1}$-gradient flow. By transforming $h(\vx)$ to $\vu(\vx)$, we obtain the $L^2$-gradient flow equation. Since the theory of gradient flows in Hilbert spaces is well developed, there are more tools in analysing equation of $\vu$. Note that the available theoretical analyses \cite{DalMaso2014Analytical,Fonseca2015Regularity,Gao2020regularity} for equations in 1+1 dimensions were based on this kind of transform.

Our main functional space will be $L_{\#0}^2(\Omega)$, in which the functions are square integrable, periodic, with zero average, and $W_{\#0}^{k,p}$, in which the functions belong to $W_{\#}^{k,p}$ with zero average.
Let $D(F):=\{\vu\in L^2(\Omega)\mid F[\vu]<+\infty\}$ be the set of functions with finite energy, and $\overline{D(F)}^{\norm{\cdot}_{L^2(\Omega)}}$ be the closure of $D(F)$ with respect to the $L^2$ distance. Note that a function $\vu$ with $\nabla\nabla\cdot\vu+\vB=0$ on some non-negligible set also exists in $D(F)$. However, such a function has no classical variation $\frac{\delta F}{\delta \vu}$ due to the singularities; see the definition of $\zeta(\vp)$ in Eq.~\eqref{eq..zeta}. Thus we should firstly consider sub-gradients and prove the existence result of evolution variational inequality
\begin{equation}\label{eq..EVI}
    \vu_t\in-\partial F[\vu]\quad \text{for a.e.}\quad t>0.
\end{equation}
We say $\vu$ is a variational inequality solution if $\vu(t)$ is a locally absolutely continuous curve such that $\lim_{t\to0}\vu(t)=\vu_0$ in $L^2(\Omega)$ for given initial datum $\vu_0$ and
\begin{equation}\label{eq..solEVI}
    \frac12\frac{\diff}{\diff{t}}\|\vu(t)-\vv\|_{L^2(\Omega)}^2\le F(\vv)-F(\vu(t)) \quad\text{a.e.}~t>0,\forall \vv\in D(F).
\end{equation}

\section{Main analysis results}\label{sec:main}

Now we state our main analysis results of existence and uniqueness of the energy minimizer  and the weak solution of evolution variational inequality
 as well as the energy scaling law in 2+1 dimensions.

\begin{thm}[Existence and uniqueness of energy minimizer]\label{thm..ExistenceEnergyMinimizer}
   Given the domain $\Omega$ and the average slope $\vB$, there exists a global minimizer $h^*$ of energy \eqref{eq..ModifiedEnergy} in the solution space $X$, that is
   \begin{equation}
        E[h^*]=\min\limits_{h\in X}E[h].
   \end{equation}
   Moreover, if the ratio of the domain size $L$ and the lattice constant $a$ satisfies $\frac{L}{a}<\beta$, where
    \begin{equation}\label{eq..coeffPi}
    \beta=\left\{
    \begin{aligned}
    &2\sqrt{\frac{3c_3}{c_1}}-\frac{3c_3}{c_1}\gamma_0&\quad \sqrt{\frac{c_1}{3c_3}}\ge\gamma_0,\\
    &\frac{1}{\gamma_0}&\quad \text{otherwise},
    \end{aligned}
    \right.
    \end{equation}
    then the minimizer $h\in X$ of energy \eqref{eq..ModifiedEnergy} is unique.
\end{thm}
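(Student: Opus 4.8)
The plan is to use the direct method in the calculus of variations for the existence part, and a convexity/coercivity argument for the uniqueness part. For existence, I would work with the equivalent energy $F[\vu]$ in Eq.~\eqref{eq..Energy.u} restricted to an appropriate subspace of $L^2_{\#0}(\Omega)$, or equivalently work directly with $\tilde h\in V$ and the energy $E[h]$ written as in Eq.~\eqref{eq..ModifiedEnergy}. The first step is to establish coercivity: using the rewriting of the nonlocal term via the $H^{1/2}$ semi-norm in Eq.~\eqref{nonlocal-energy-1}, one sees the nonlocal contribution is $-2c_1\pi^2 L[\tilde h]_{H^{1/2}(\Omega)}^2$, which is negative, but by the interpolation inequality $[\tilde h]_{H^{1/2}(\Omega)}^2\lesssim \|\tilde h\|_{L^2}\,[\tilde h]_{H^1}$ (or directly comparing Fourier multipliers $|\vk|$ against $|\vk|^2$) together with the bound $\Psi(\vp)\ge ac_3|\vp|^3$ from \cref{subsec:regularization}, the cubic local term $ac_3\int_\Omega|\nabla h|^3$ dominates the quadratic-in-gradient nonlocal term on the zero-average space $V$. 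Hence $E[h]\to+\infty$ as $\|\tilde h\|_{H^1}\to\infty$ along $X$, giving a minimizing sequence bounded in $H^1_\#(\Omega)$ (in fact in $W^{1,3}_\#$).

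The second step is lower semicontinuity. Extract a subsequence $\tilde h_n\rightharpoonup \tilde h^*$ weakly in $H^1_\#(\Omega)$ (and weakly in $W^{1,3}_\#$, strongly in $L^2$ and in $H^{1/2}$ by compact embedding). The local term $\int_\Omega\Psi(\nabla h)$ is weakly lower semicontinuous because $\Psi$ is convex (\cref{prop..Convexity}) and has the right growth — this is the standard Tonelli/De Giorgi result for convex integrands. The nonlocal term $-2c_1\pi^2L[\tilde h_n]_{H^{1/2}}^2$ is \emph{continuous} along the sequence because of the strong convergence in $H^{1/2}(\Omega)$ coming from the compact embedding $H^1_\#(\Omega)\hookrightarrow H^{1/2}_\#(\Omega)$; so no semicontinuity issue arises from the negative sign. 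Therefore $E[h^*]\le\liminf E[h_n]=\inf_X E$, and setting $h^*=\tilde h^*+\vB^\T\vx$ gives the minimizer.

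For uniqueness under $L/a<\beta$, the idea is to show that under this smallness condition the \emph{entire} functional $E$ (nonlocal plus local) is strictly convex on $X$. The local part $\int_\Omega\Psi(\nabla h)$ is convex; I would quantify its strong convexity from the Hessian computation in the proof of \cref{prop..Convexity}: from $\det(\nabla\nabla\Psi)\ge 0$ and the diagonal bound, one extracts a modulus — in particular the term $3ac_3|\vp|$ in $\partial_{p_ip_i}\Psi$ together with the $c_1/(|\vp|+\gamma_0)$ term gives, after optimizing, a lower bound on the smallest eigenvalue of the form $2a\sqrt{3c_1c_3}-$ (correction involving $\gamma_0$), which is exactly where $\beta$ in Eq.~\eqref{eq..coeffPi} comes from (the two cases correspond to whether $\sqrt{c_1/(3c_3)}\ge\gamma_0$). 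In Fourier space on $V$, strong convexity of the local part contributes $\gtrsim \lambda_{\min}\sum_{\vk}|\vk|^2|h_{\vk}|^2$ while the nonlocal part contributes $-4c_1\pi^2L\sum_{\vk}|\vk||h_{\vk}|^2$; since on the periodic cell $|\vk|\ge 2\pi/L$ for every nonzero mode, the local term beats the nonlocal one for all modes precisely when $L/a$ is below the stated threshold $\beta$. Then $E$ is strictly convex along any segment in $X$, so the minimizer is unique.

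The main obstacle I expect is the uniqueness part, specifically getting the \emph{sharp} constant $\beta$: one must carefully extract the optimal strong-convexity modulus of $\Psi$ (minimizing $\frac{c_1}{r+\gamma_0}+3c_3 r$ over $r=|\vp|\ge0$, which yields the $2\sqrt{3c_1c_3}-3c_3\gamma_0$ type expression and the case split), and then match it against the worst Fourier mode $|\vk|=2\pi/L$ so that the mode-by-mode positivity $a\lambda_{\min}|\vk|^2-4c_1\pi^2L|\vk|\ge 0$ translates exactly into $L/a<\beta$. The existence part is comparatively routine once coercivity via the cubic term is in hand; the only mild subtlety there is handling the unbounded factor $\log(|\vp|+\gamma_0)$ in $\Psi$, which is controlled since $|\vp|\log(|\vp|+\gamma_0)$ is dominated by $|\vp|^3$ at infinity and is continuous (bounded below) near $\vp=0$.
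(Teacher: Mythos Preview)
Your proposal is essentially the paper's own proof. Existence goes exactly as you say: coercivity from $\Psi(\vp)\ge ac_3|\vp|^3$ together with $[\tilde h]_{H^{1/2}}^2\le\frac{1}{4\pi^2}\|\nabla\tilde h\|_{L^2}^2$ (Proposition~\ref{prop..Coercivity}), then splitting $E=E_1+E_2$ with $E_1$ (nonlocal) continuous along a weakly $H^1$-convergent subsequence via the compact embedding $H^1\hookrightarrow\hookrightarrow H^{1/2}$, and $E_2$ (local) weakly lower semicontinuous by convexity of $\Psi$ (Proposition~\ref{prop..SequentialWeakLowerSemicont}).

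For uniqueness the core idea is also the same: the paper isolates the sharp strong-convexity modulus $\Psi_{p_ip_j}\xi_i\xi_j\ge ac_1\beta|\vxi|^2$ (Proposition~\ref{prop..unifconvex}) and then runs a midpoint argument, combining
\[
\int_\Omega\Psi(\nabla h_3)+\tfrac{ac_1\beta}{8}\|\nabla h_1-\nabla h_2\|_{L^2}^2\le\tfrac12\Bigl(\int_\Omega\Psi(\nabla h_1)+\int_\Omega\Psi(\nabla h_2)\Bigr)
\]
with the nonlocal identity $I[h_3]=\tfrac12(I[h_1]+I[h_2])+\tfrac{c_1}{8}\langle h_1-h_2,\,\cdot\,\rangle$ bounded by $\tfrac{c_1L}{8}\|\nabla h_1-\nabla h_2\|_{L^2}^2$. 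This yields strict convexity exactly when $ac_1\beta>c_1L$, i.e.\ $L/a<\beta$. Your Fourier ``mode-by-mode'' phrasing encodes the same comparison (the worst mode $|\vk|=1$ is where $|\vk|=|\vk|^2$), but note that $\int_\Omega\Psi(\nabla h)$ is not diagonal in Fourier, so the rigorous version is precisely the midpoint inequality above rather than a literal mode-wise positivity; your constants would sort themselves out once you pass through $\|\nabla\tilde h\|_{L^2}^2=4\pi^2\sum|\vk|^2|h_{\vk}|^2$. Your identification of $\beta$ as the minimum of $\frac{1}{c_1}\bigl(\frac{c_1}{r+\gamma_0}+3c_3 r\bigr)$ over $r\ge0$ is exactly what drives the case split in Eq.~\eqref{eq..coeffPi}, though the paper additionally checks $\det(\nabla\nabla\Phi)\ge0$ for $\Phi=\Psi-\tfrac12 ac_1\beta|\vp|^2$ to confirm the full $2\times2$ Hessian bound.
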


\begin{thm}[Existence and uniqueness of weak solution of evolution variational inequality]\label{thm..ExistenceEVI}
    Suppose the ratio of the domain size $L$ and the lattice constant $a$ satisfies $\frac{L}{a}<\beta$, where $\beta$ takes as \eqref{eq..coeffPi}
    and for any initial data $\vu_0\in\overline{D(F)}^{\norm{\cdot}_{L^2(\Omega)}}$, there exists a unique solution $\vu$ satisfying the evolution variational inequality \eqref{eq..EVI} and
    $\vu\in L^\infty_\mathrm{loc}(0,+\infty;W^{2,3}_{\#0}(\Omega))$,
    $\vu_t\in L^\infty(0,+\infty;L^2(\Omega))$.
\end{thm}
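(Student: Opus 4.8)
The plan is to recognise the evolution equation \eqref{eq..EvolutionEq.u} as the $L^2$-gradient flow \eqref{eq..L2u} of the energy $F$ on the Hilbert space $H:=L^2_{\#0}(\Omega)$, and then to invoke the classical theory of gradient flows generated by proper, convex, lower semicontinuous functionals (Brezis's theory of maximal monotone subdifferentials, equivalently the evolution-variational-inequality framework). Once $F$ is shown to be proper, convex and lower semicontinuous on $H$, that theory delivers at once: $\partial F$ is maximal monotone; for every $\vu_0\in\overline{D(F)}^{\norm{\cdot}_{L^2(\Omega)}}$ there is a unique locally absolutely continuous curve $\vu$ with $\vu_t(t)\in-\partial F[\vu(t)]$ for a.e.\ $t>0$ and $\vu(t)\to\vu_0$ in $L^2(\Omega)$ as $t\to0$; and, because $F$ is convex, this curve is precisely the solution of the EVI \eqref{eq..solEVI}. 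Uniqueness of EVI solutions is then immediate: if $\vu_1,\vu_2$ both satisfy \eqref{eq..solEVI} with the same datum, insert $\vv=\vu_2(t)$ into the inequality for $\vu_1$ and $\vv=\vu_1(t)$ into that for $\vu_2$ and add, to get $\frac{\diff}{\diff{t}}\norm{\vu_1(t)-\vu_2(t)}_{L^2(\Omega)}^2\le0$ with zero initial value. So the proof reduces to verifying the three structural hypotheses on $F$ and then extracting the stated regularity.

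For properness and coercivity I would use $\Psi(\vp)\ge ac_3\abs{\vp}^3$ (noted after \eqref{eq..phi}) together with $[\nabla\cdot\vu]_{H^{1/2}(\Omega)}^2\lesssim\norm{\nabla\nabla\cdot\vu}_{L^3(\Omega)}^2$ on the two-dimensional cell, plus the Calder\'{o}n--Zygmund estimate for $\Delta$ (after gauge-fixing $\vu$, so that $\nabla\cdot\vu$ controls $\vu$ in $W^{2,3}$): this gives $F[\vzero]=\abs{\Omega}\Psi(\vB)<+\infty$ and an inequality $F[\vu]\ge c\norm{\vu}_{W^{2,3}(\Omega)}^3-CL\norm{\vu}_{W^{2,3}(\Omega)}^2-C'$, so that $F>-\infty$ and the sublevel sets $\{F\le c\}$ are bounded in $W^{2,3}_{\#0}(\Omega)$ (note this step needs no hypothesis on $L/a$). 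For $L^2$-lower semicontinuity, since $W^{2,3}(\Omega)\hookrightarrow\hookrightarrow L^2(\Omega)$ and $W^{1,3}(\Omega)\hookrightarrow\hookrightarrow H^{1/2}(\Omega)$, along any $L^2$-convergent bounded-energy sequence one passes to a weakly $W^{2,3}_{\#0}$-convergent subsequence, on which the convex integral $\int_\Omega\Psi(\nabla\nabla\cdot\vu+\vB)\diff{\vx}$ is weakly lower semicontinuous and the quadratic term $-2c_1\pi^2L[\nabla\cdot\vu]_{H^{1/2}(\Omega)}^2$ is in fact continuous (strong $H^{1/2}$ convergence of $\nabla\cdot\vu$); hence $F$ is $L^2$-lower semicontinuous.

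The convexity of $F$ is where the hypothesis $L/a<\beta$ enters, and I expect it to be the main obstacle. Writing $F=F_{\mathrm{loc}}+F_{\mathrm{nl}}$ with $F_{\mathrm{loc}}[\vu]=\int_\Omega\Psi(\nabla\nabla\cdot\vu+\vB)\diff{\vx}$ and $F_{\mathrm{nl}}[\vu]=-2c_1\pi^2L[\nabla\cdot\vu]_{H^{1/2}(\Omega)}^2$, the first is convex as the composition of the convex $\Psi$ (\cref{prop..Convexity}) with the affine map $\vu\mapsto\nabla\nabla\cdot\vu+\vB$; the second is a concave quadratic form whose curvature, once expanded by Parseval on the periodic cell (where the lowest frequency is $2\pi/L$), can be absorbed into the uniformly convex part of the Hessian of $\Psi$ --- and this absorption is possible exactly when $L/a<\beta$ with $\beta$ as in \eqref{eq..coeffPi}. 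This is the computation already carried out for the uniqueness statement of \cref{thm..ExistenceEnergyMinimizer}, so it can be cited; thus under $L/a<\beta$ the functional $F$ is convex (strictly so modulo the divergence-free gauge) and $\partial F$ is maximal monotone, completing the hypotheses needed for the abstract theory.

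Finally, for the regularity: the standard regularizing estimate $F(\vu(t))\le F(\vv)+\tfrac{1}{2t}\norm{\vu_0-\vv}_{L^2(\Omega)}^2$ (any $\vv\in D(F)$) shows $F(\vu(t))<+\infty$ for $t>0$, and combined with the monotonicity of $t\mapsto F(\vu(t))$ and the coercivity bound above this gives $\vu\in L^\infty_{\mathrm{loc}}(0,+\infty;W^{2,3}_{\#0}(\Omega))$; the velocity bound follows from $\vu_t(t)=-\partial^\circ F(\vu(t))$ (minimal-norm section) and the non-increase of $t\mapsto\norm{\vu_t(t)}_{L^2(\Omega)}$ on $(0,+\infty)$, with the quantitative decay $\norm{\vu_t(t)}_{L^2(\Omega)}^2\le\tfrac1t(F(\vu_0)-\inf F)$. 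To connect the abstract solution with the explicit equation \eqref{eq..EvolutionEq.u}, one identifies the subdifferential: wherever $\nabla\nabla\cdot\vu+\vB\neq\vzero$ a.e.\ and $\nabla\{c_1\int_{\sR^2}\frac{\vx-\vy}{\abs{\vx-\vy}^3}\cdot\nabla\nabla\cdot\vu(\vy)\diff{\vy}+\nabla\cdot\zeta(\nabla\nabla\cdot\vu+\vB)\}\in L^2(\Omega)$, that expression is the unique subgradient of $F$ at $\vu$, by the standard description of the subdifferential of an integral functional with convex integrand that is $C^1$ off the singular set of $\zeta$. The genuinely delicate points, beyond the convexity threshold, are therefore that $\Psi$ is only $C^1$ --- with $\zeta$ singular precisely on the flat set $\{\nabla\nabla\cdot\vu+\vB=\vzero\}$, which is non-negligible within $D(F)$, so one cannot work with the smooth fourth-order-type equation directly but must use subgradients --- and matching the $L^2$-level abstract solution with $W^{2,3}$-regularity uniformly in time away from $t=0$.
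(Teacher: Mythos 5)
Your proposal is correct and follows essentially the same route as the paper: verify coercivity, lower semicontinuity, and convexity of $F$ in $L^2$ --- with the hypothesis $L/a<\beta$ entering exactly where you place it, namely in absorbing the concave nonlocal $H^{1/2}$ term into the uniform convexity modulus $ac_1\beta$ of $\Psi$ from \cref{prop..unifconvex} --- and then invoke the abstract generation theorem for gradient flows, which is precisely the content of \cref{prop..BoundednessConvexityLSCCompactness} and the paper's one-line proof citing Theorem 4.0.4 of Ambrosio--Gigli--Savar\'e. The only difference is cosmetic: you appeal to Brezis's Hilbert-space theory for proper convex lsc functionals (plus a direct Gr\"onwall uniqueness argument and a sketch of the subdifferential identification), while the paper phrases the same input as $\lambda$-convexity with $\lambda=ac_1\beta-c_1L>0$ and cites the metric-space EVI theorem; both rest on the identical quantitative estimates.
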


\begin{rmk}
Except for the existence of energy minimizer, the conclusions in the above two theorems rely on the assumption $L/a<\beta$, where $L$ is the domain size.  The physical meaning of this assumption is that the stabilizing effect of the force dipole interaction, i.e., the $c_3$ (or $g_3$) term in \eqref{eq..ModifiedEnergy} and \eqref{eq..phi}, should dominate over the destabilizing long-range effect of the misfit energy, i.e., the term with $H^\frac{1}{2}$ norm in \eqref{eq..ModifiedEnergy}. From Eq.~\eqref{eq..EnergyPeriodic}, it can be seen
that the misfit energy that has the  destabilizing long-range effect  is proportional to $L^3$ whereas all the local energies including that due to the stabilizing  force dipole interaction is only proportional to $L^2$, which makes the proofs challenging. This challenge also exists in the continuum model in $1+1$ dimension \cite{Xiang2002Derivation}. The available analyses  \cite{DalMaso2014Analytical,Fonseca2015Regularity,Gao2017continuum}  for the $1+1$ dimensional model were based on the special case where all the coefficients in the PDE are equal to $1$ and the domain has a fixed $O(1)$ size ($2\pi$ or $1$), and in this special case, the above mentioned assumption is satisfied.

For the range of physically meaning values of $\beta$,
 we consider parameters $a=\SI{0.27}{\nano\meter}$, $\nu=0.25$, $G=\SI{3.8e10}{\pascal}$, $r_c=a$, and $\eps_0=0.012$.
    When the parameters $g_1=\SI{0.03}{\joule\per \meter^2}$ and $g_3=\SI{8.58}{\joule\per \meter^2}$ as in Refs.~\cite{shenoy2002continuum,Zhu2009Continuum},
    we have   $\beta=130$.
      From Refs.~\cite{jeong1999steps,sudoh1998step}, for Si(113) at $\SI{983}{\K}$, we have $g_1=\SI{0.3382}{\joule\per \meter^2}$, $g_3=\SI{ 5767.8}{\joule\per \meter^2}$, and $\beta=3431$. From Ref.~\cite{jeong1999steps}, for Si(111) at $\SI{1223}{\K}$, we have $g_1=\SI{0.1778}{\joule\per \meter^2}$, $g_3=\SI{0.8011}{\joule\per \meter^2}$, under which $\beta=40$. These values show that the assumption $L/a<\beta$ holds for  reasonable sizes of domain for the cases of strong force dipole interaction and/or small misfit.
\end{rmk}

\begin{thm}[Energy scaling law]\label{thm..scalinglaw}
Given the domain $\Omega$ and the average slope $\vB$, the following energy scaling law holds for energy \eqref{eq..ModifiedEnergy} with some positive constants $C_1, C_2$
\begin{equation}\label{eqn:scalinglaw}
    -C_1a^{-2}\le\inf\limits_{h\in X} E[h]\le-C_2a^{-2}, \ \ a\rightarrow 0.
\end{equation}
\end{thm}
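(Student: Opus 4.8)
The plan is to match an explicit family of competitors against a general lower bound; both use only that the energy \eqref{eq..ModifiedEnergy} is a strictly negative quadratic (the nonlocal $H^{1/2}$-term) plus a superquadratic local term, together with the elementary identity $\min_{t\ge 0}\!\big(-\alpha t^2+\beta t^3\big)=-\tfrac{4}{27}\alpha^3/\beta^2$. Heuristically the stabilizing cubic term $ac_3|\nabla h|^3$ is weak compared with the destabilizing nonlocal term as $a\to 0$, which forces gradients of size $\sim a^{-1}$ at the optimum and produces the power $a^{-2}$. For the upper bound I would test with a single low-frequency Fourier mode, say $\tilde h_\delta(\vx)=\delta\cos(2\pi x_2/L)$, so that $h_\delta=\vB^\T\vx+\tilde h_\delta\in X$ — a step-meandering profile, since the level curves of $h_\delta$ are sinusoidal displacements of the straight steps of the reference plane $\vB^\T\vx$. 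Here $[\tilde h_\delta]_{H^{1/2}(\Omega)}^2=\tfrac12\delta^2$ while $\nabla h_\delta=\vB+(0,-\tfrac{2\pi\delta}{L}\sin(2\pi x_2/L))$ satisfies $|\nabla h_\delta|\le|\vB|+\tfrac{2\pi\delta}{L}$, so for $\delta$ large, $\int_\Omega\Psi(\nabla h_\delta)\,\diff{\vx}=\tfrac{32\pi^2}{3L}\,ac_3\,\delta^3(1+o(1))$, the remainder coming from the linear and logarithmic parts of $\Psi$ and from the $\vB$-cross terms, and being of order $O(ac_2\,\delta)+O(ac_1\,\delta\log\delta)$. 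Hence $E[h_\delta]=-c_1\pi^2L\,\delta^2+\tfrac{32\pi^2}{3L}\,ac_3\,\delta^3+(\text{lower order in }\delta)$; minimizing the leading cubic-minus-quadratic over $\delta$ gives $\delta=\delta^*=\Theta(a^{-1})$ and $E[h_{\delta^*}]\le -C_2a^{-2}$, the corrections being $O(a^{-1}\log(1/a))=o(a^{-2})$.

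\textbf{Lower bound.} Let $h\in X$ with $E[h]<\infty$. Since $\gamma_0=\exp(-c_2/c_1)$ yields $c_1\log\gamma_0+c_2=0$, one has $\Psi(\vp)\ge ac_3|\vp|^3$ (the estimate recorded just after \eqref{eq..phi}), so the nonnegative linear and logarithmic contributions may be discarded: $E[h]\ge -2c_1\pi^2L\,[\tilde h]_{H^{1/2}(\Omega)}^2+ac_3\int_\Omega|\nabla h|^3\,\diff{\vx}$. Next I convert the nonlocal seminorm into the local term. With $R:=[\tilde h]_{H^{1/2}(\Omega)}$, and using $|\vk|\le|\vk|^2$ for $\vk\neq 0$, Parseval, $\int_\Omega\nabla\tilde h\,\diff{\vx}=0$, and H\"older on $\Omega$,
\[
(2\pi)^2R^2\le(2\pi)^2\sum_{\vk}|\vk|^2|h_{\vk}|^2=\int_\Omega|\nabla\tilde h|^2\,\diff{\vx}\le\int_\Omega|\nabla h|^2\,\diff{\vx}\le|\Omega|^{1/3}\Big(\int_\Omega|\nabla h|^3\,\diff{\vx}\Big)^{2/3},
\]
so $\int_\Omega|\nabla h|^3\,\diff{\vx}\ge\tfrac{(2\pi)^3}{L}R^3$. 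Therefore $E[h]\ge -2c_1\pi^2L\,R^2+\tfrac{(2\pi)^3ac_3}{L}R^3$ for every admissible $h$, and minimizing the right-hand side over $R\ge 0$ (inserting the $a$-scalings of $c_1,c_3$) gives $\inf_{h\in X}E[h]\ge -C_1a^{-2}$; incidentally this re-proves, independently of \cref{thm..ExistenceEnergyMinimizer}, that $E$ is bounded below.

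\textbf{Where the difficulty lies.} The direct-method and convexity machinery of \cref{section3} plays no role here; the content is choosing the right competitor. The conceptual point is recognizing that the near-optimal profile is a \emph{large-amplitude, bounded-frequency} oscillation, with $|\nabla h|\sim a^{-1}$, whose steps meander, and checking that step-bunching competitors — the same computation with $\nabla h$ varying only along the $\vB$-direction — do no better; this comparison is what justifies the claim that the minimum-energy profile is of meandering type, in contrast to $1{+}1$ dimensions. The remaining care is: (i) the linear and logarithmic parts of $\Psi$ are genuinely lower order but must be controlled near $|\vp|=0$, where $\Psi$ degenerates only quadratically precisely because $c_1\log\gamma_0+c_2=0$, and at $|\vp|\sim a^{-1}$, where the logarithm contributes only $O(a^{-1}\log(1/a))$; (ii) the reference-plane shift $\vB^\T\vx$ — harmless in the lower bound since $\int_\Omega|\nabla h|^2=|\Omega||\vB|^2+\int_\Omega|\nabla\tilde h|^2\ge\int_\Omega|\nabla\tilde h|^2$, and of lower order in $\delta$ in the upper bound; and (iii) bookkeeping the $a$-dependence so that $C_1,C_2$ are $a$-independent and the exponent is exactly $-2$.
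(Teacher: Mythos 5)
Your proposal is correct and takes essentially the same route as the paper: your upper bound uses the same single sinusoidal meandering competitor with amplitude $\Theta(a^{-1})$ that the paper writes as $h=B(x+A\sin\omega y)$ (its dominant-balance choice $A\sim \frac{c_1\pi^2}{4c_3\omega^2B}a^{-1}$ is exactly your $\delta^*$, and with $\omega=2\pi/L$ it gives the same leading constant). Your lower bound rests on the same two ingredients as the paper's, namely $\Psi(\vp)\ge ac_3\abs{\vp}^3$ and $[\tilde h]_{H^{1/2}(\Omega)}^2\le\frac{1}{4\pi^2}\norm{\nabla\tilde h}_{L^2(\Omega)}^2$, differing only cosmetically in that you reduce to a scalar cubic-minus-quadratic in $[\tilde h]_{H^{1/2}(\Omega)}$ via H\"older whereas the paper minimizes the integrand pointwise; both give $-\frac{c_1^3L^5}{54c_3^2}a^{-2}+o(a^{-2})$.
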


\begin{rmk}
Recall that in Ref.~\cite{Luo2017Bunching}, the energy functional for 1+1 dimensional continuum model
\begin{equation}\label{eqn:energy1d}
    E[h]=-\frac12\int_{\Omega} h(x)\mathrm{P.V.}\int_{\sR}\frac{h_x(y)}{x-y}\diff{y}\diff{x}+a\int_{\Omega}\left(h_x\log{h_x}+\frac{\gamma}{6}h_x^3\right)\diff{x},
\end{equation}
has a global minimizer in $X^{'}$ and the following energy scaling law
\begin{equation}\label{eqn:scaling1d}
    \frac{B^2}{2}\log{a}-C\le\inf\limits_{h\in X^{'}}E[h]\le \frac{B^2}{2}\log{a}+C', \ \ a\rightarrow 0,
\end{equation}
if the average slope $B>0$, for some positive constants $C,C'$. Here $\Omega=[-\frac12,\frac12]$ is a periodic cell and the solution space $X^{'}=\{h\in H_{loc}^1(\sR):
\tilde{h}(x)=h(x)-Bx\in H^1(\sR)$ with $\Omega$-period weak derivatives, $\int_{\Omega}\tilde{h}(x)\diff{x}=0
,h_x\ge 0,a.e.~ x\in\sR\}$.
Notice that the energy scaling in 2+1 dimensions in Eq.~\eqref{eqn:scalinglaw} is essentially different from that  in 1+1 dimensions in Eq.~\eqref{eqn:scaling1d}. As will be shown in \cref{sec5}, this essential difference is due to the fact that the step meandering instability dominates in 2+1 dimensions, whereas the step bunching instability dominates in 1+1 dimensions.
\end{rmk}

\section{Existence and uniqueness of energy minimizer}\label{sec3}

In this section, we use the direct method in the calculus of variations to prove the minimizer existence and uniqueness of the energy \eqref{eq..ModifiedEnergy}.

Weak lower semi-continuity  is commonly used in the  existence proof of the minimizer of the energy functional. However, the standard weak lower semi-continuity \cite{Evans2010Partial}   cannot be applied directly to our problem because our total energy  \eqref{eq..ModifiedEnergy} is not convex in $\nabla h$  due to the presence of the negative contribution of the $H^{1/2}$ semi-norm  term. We will write another version of weak lower semi-continuity and then use it to prove the existence and uniqueness result together with coercivity of $E[h]$ given below and convexity of $\Psi(\vp)$ in \cref{prop..Convexity}.

Before the proof, we rephrase the standard weak lower semi-continuity \cite{Evans2010Partial} for energy functional as follows.
\begin{prop}[\cite{Evans2010Partial} Standard weak lower semi-continuity]\label{prop..WeakLowerSemicont}
  Assume that $\Psi$ is bounded below and the mapping $\vp\mapsto \Psi(\vp, z, \vx)$ is convex, for each $z\in\sR$, $\vx\in \Omega$. Then $E[h]:=\int_{\Omega}\Psi(\nabla h(\vx), h(\vx), \vx)\diff{\vx}$ is weakly lower semi-continuous.
\end{prop}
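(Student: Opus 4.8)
The plan is to reproduce the classical convexity/Egorov argument for lower semicontinuity of integral functionals (the proposition is exactly the lower-semicontinuity theorem of \cite{Evans2010Partial}), taking lower semicontinuity with respect to weak convergence in $W^{1,p}_\#(\Omega)$; in our application $p=3$, matching the cubic growth of $\Psi$. Suppose $h_k\rightharpoonup h$ weakly in $W^{1,p}_\#(\Omega)$. First I would normalize: since $\Psi$ is bounded below, say $\Psi\ge -M$, replacing $\Psi$ by $\Psi+M$ changes $E$ only by the additive constant $M\abs{\Omega}$ and hence does not affect lower semicontinuity, so we may assume $\Psi\ge 0$; then $E[h]\in[0,+\infty]$ is well defined for every $h$. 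Set $\ell:=\liminf_{k\to\infty}E[h_k]$. Passing to a subsequence (not relabeled) we may assume $E[h_k]\to\ell$, and by the compact embedding $W^{1,p}_\#(\Omega)\hookrightarrow\hookrightarrow L^p(\Omega)$ also $h_k\to h$ strongly in $L^p(\Omega)$ and, along a further subsequence, $h_k\to h$ a.e.\ in $\Omega$.

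Fix $\eps>0$. By Egorov's theorem there is a measurable set $\Omega_\eps\subset\Omega$ with $\abs{\Omega\setminus\Omega_\eps}\le\eps$ on which $h_k\to h$ uniformly. Intersecting $\Omega_\eps$ with $\{\vx:\abs{h(\vx)}\le 1/\eps,\ \abs{\nabla h(\vx)}\le 1/\eps\}$, whose complement in $\Omega$ has measure tending to $0$ as $\eps\to0$, I obtain a set $F_\eps$ with $\abs{\Omega\setminus F_\eps}\to0$ on which $h_k\to h$ uniformly and $(\nabla h,h)$ is bounded. Using convexity of $\vp\mapsto\Psi(\vp,z,\vx)$ in the form
\begin{equation*}
\Psi(\nabla h_k,h_k,\vx)\ \ge\ \Psi(\nabla h,h_k,\vx)+D_{\vp}\Psi(\nabla h,h_k,\vx)\cdot(\nabla h_k-\nabla h),
\end{equation*}
and discarding the integral over $\Omega\setminus F_\eps$ (which is $\ge 0$ since $\Psi\ge0$), I get
\begin{equation*}
E[h_k]\ \ge\ \int_{F_\eps}\Psi(\nabla h,h_k,\vx)\diff{\vx}+\int_{F_\eps}D_{\vp}\Psi(\nabla h,h_k,\vx)\cdot(\nabla h_k-\nabla h)\diff{\vx}.
\end{equation*}

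Now I would let $k\to\infty$ with $\eps$ fixed. On $F_\eps$ the triples $(\nabla h(\vx),h_k(\vx),\vx)$ lie, for $k$ large, in a fixed compact subset of $\sR^2\times\sR\times\overline{\Omega}$ (because $\abs{\nabla h}\le1/\eps$ and $h_k\to h$ uniformly), so by uniform continuity of $\Psi$ and of $D_{\vp}\Psi$ there, $\Psi(\nabla h,h_k,\vx)\to\Psi(\nabla h,h,\vx)$ uniformly on $F_\eps$ and $D_{\vp}\Psi(\nabla h,h_k,\vx)\to D_{\vp}\Psi(\nabla h,h,\vx)$ in $L^{p'}(F_\eps)$; since moreover $\nabla h_k-\nabla h\rightharpoonup 0$ weakly in $L^p(\Omega)$, the pairing of the (convergent, $L^{p'}$) factor against the (bounded, weakly null, $L^p$) factor shows the second integral tends to $0$. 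Hence $\ell\ge\int_{F_\eps}\Psi(\nabla h,h,\vx)\diff{\vx}$ for every $\eps>0$, and letting $\eps\to0$ with monotone convergence ($F_\eps\uparrow\Omega$ up to a null set and $\Psi\ge0$) yields $\ell\ge\int_\Omega\Psi(\nabla h,h,\vx)\diff{\vx}=E[h]$, proving the claim.

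The step I expect to be the main obstacle is the vanishing of the gradient term: it forces the truncation to $F_\eps$ (so $D_{\vp}\Psi(\nabla h,h_k,\cdot)$ is bounded there and the $L^{p'}$/$L^p$ pairing argument applies) and the uniform convergence from Egorov (so the second, ``frozen'' argument $h_k$ can be replaced by $h$ in the limit, since $\Psi$ is assumed convex only in $\vp$, not jointly). A second point specific to our setting is that the model's local density $\Psi(\vp)=ac_1\abs{\vp}\log(\abs{\vp}+\gamma_0)+ac_2\abs{\vp}+ac_3\abs{\vp}^3$ depends only on $\vp$ and is merely continuous — it is not $C^1$ at $\vp=\vzero$. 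There $D_{\vp}\Psi$ should be replaced by a fixed measurable selection $\vxi(\vx)\in\partial\Psi(\nabla h(\vx))$, which exists and is locally bounded by convexity (hence lies in $L^{p'}(F_\eps)$); the subgradient inequality $\Psi(\nabla h_k)\ge\Psi(\nabla h)+\vxi\cdot(\nabla h_k-\nabla h)$ then makes the argument go through, and since $\vxi$ does not depend on $k$ the gradient term vanishes directly from $\nabla h_k\rightharpoonup\nabla h$, so no convergence of subgradients is needed and the proof in fact simplifies.
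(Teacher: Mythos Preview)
The paper does not give its own proof of this proposition; it is stated as a citation of the standard result from \cite{Evans2010Partial} and then invoked in the proof of \cref{prop..SequentialWeakLowerSemicont}. Your argument is precisely the classical convexity/Egorov proof from Evans, carried out correctly: normalize to $\Psi\ge 0$, extract an a.e.\ convergent subsequence via Rellich, truncate via Egorov to a set $F_\eps$ on which $(\nabla h,h_k)$ stays in a compact set, apply the convexity (sub)gradient inequality, and kill the cross term by pairing a strongly $L^{p'}$-convergent coefficient against the weakly null $\nabla h_k-\nabla h$. So there is nothing to compare --- you have reproduced the cited proof.

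Your closing observation is worth keeping: the specific density $\Psi(\vp)=ac_1\abs{\vp}\log(\abs{\vp}+\gamma_0)+ac_2\abs{\vp}+ac_3\abs{\vp}^3$ used downstream is not $C^1$ at $\vp=\vzero$, so Evans' smooth version does not literally apply. Your fix via a measurable subgradient selection $\vxi(\vx)\in\partial\Psi(\nabla h(\vx))$ is the right one, and in fact because here $\Psi$ depends only on $\vp$ the argument simplifies (no need to pass $h_k\to h$ inside the coefficient). This point is glossed over in the paper's appeal to \cref{prop..WeakLowerSemicont}, so your remark strengthens the exposition.
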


\begin{prop}[Coercivity]\label{prop..Coercivity}
  Suppose that $\tilde{h}\in V$. Then there exists a constant $C$ (depending on the coefficients $a$, $c_1$, and $c_3$ in \eqref{eq..phi}), such that
  \begin{equation*}
    E[h]\geq \frac{c_1}{2}L\norm{\nabla\tilde{h}}_{L^2(\Omega)}^2-CL^2.
  \end{equation*}
\end{prop}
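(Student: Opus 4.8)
The plan is to prove the coercivity estimate by controlling the negative $H^{1/2}$ term using the interpolation inequality $[\tilde h]_{H^{1/2}(\Omega)}^2 \lesssim \|\tilde h\|_{L^2(\Omega)} \|\nabla \tilde h\|_{L^2(\Omega)}$ (valid on the periodic cell by comparing Fourier multipliers: $|\vk| \le \tfrac{\varepsilon}{2}|\vk|^2 + \tfrac{1}{2\varepsilon}$), together with the Poincaré inequality $\|\tilde h\|_{L^2(\Omega)} \lesssim L \|\nabla \tilde h\|_{L^2(\Omega)}$ available since $\tilde h \in V$ has zero average on $\Omega=[0,L]^2$. Combining these gives $[\tilde h]_{H^{1/2}(\Omega)}^2 \le C_0 L \|\nabla\tilde h\|_{L^2(\Omega)}^2$ for an absolute constant $C_0$; hence the nonlocal term satisfies $-2c_1\pi^2 L [\tilde h]_{H^{1/2}(\Omega)}^2 \ge -2c_1\pi^2 C_0 L^2 \|\nabla\tilde h\|_{L^2(\Omega)}^2$. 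This is not yet good enough — the bad term scales like $L^2\|\nabla\tilde h\|^2$, which cannot be absorbed by a term of the form $\tfrac{c_1}{2}L\|\nabla\tilde h\|^2$. So the cubic local term must do the work.

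Next I would exploit the lower bound on the local density already recorded in the paper, $\Psi(\vp) \ge a c_3 |\vp|^3$, so that $\int_\Omega \Psi(\nabla h)\diff\vx \ge a c_3 \int_\Omega |\nabla h|^3 \diff\vx$. Writing $\nabla h = \nabla\tilde h + \vB$ and using $|\nabla h|^3 \ge \tfrac12|\nabla\tilde h|^3 - C|\vB|^3$ pointwise (or simply $\int|\nabla\tilde h|^3 \le C(\int|\nabla h|^3 + L^2|\vB|^3)$), we reduce to bounding $\|\nabla\tilde h\|_{L^2(\Omega)}^2$ below by a multiple of $\|\nabla\tilde h\|_{L^3(\Omega)}^3$ minus lower-order-in-$L$ terms, which is false directly, but the point is to play $\|\nabla\tilde h\|_{L^2}^2$ against $\|\nabla\tilde h\|_{L^3}^3$ via Young's inequality: for any $\eta>0$,
\begin{equation*}
\|\nabla\tilde h\|_{L^2(\Omega)}^2 = \int_\Omega |\nabla\tilde h|^2 \diff\vx \le \eta \int_\Omega |\nabla\tilde h|^3 \diff\vx + C_\eta |\Omega| = \eta\|\nabla\tilde h\|_{L^3(\Omega)}^3 + C_\eta L^2 .
\end{equation*}
Therefore the genuinely coercive quantity is the cubic one, and after choosing $\eta$ appropriately the claimed $\tfrac{c_1}{2}L\|\nabla\tilde h\|_{L^2(\Omega)}^2$ is dominated by $ac_3\|\nabla\tilde h\|_{L^3(\Omega)}^3$ up to a $CL^2$ error. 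It then remains to check that the $ac_3\|\nabla\tilde h\|_{L^3}^3$ we have available also dominates the bad term $2c_1\pi^2 C_0 L^2\|\nabla\tilde h\|_{L^2(\Omega)}^2$; again by Young's inequality $L^2\|\nabla\tilde h\|_{L^2}^2 \le L^2(\eta'\|\nabla\tilde h\|_{L^3}^3 + C_{\eta'}L^2)$, and taking $\eta'$ small relative to $ac_3/L^2$ absorbs it at the cost of a $CL^4$-type term — but since the proposition only asks for a bound of the form $-CL^2$ with $C$ allowed to depend on $a,c_1,c_3$, one must be careful about the $L$-powers in the error. The cleanest route is: keep a fixed fraction, say $\tfrac12 a c_3\|\nabla\tilde h\|_{L^3}^3$, to absorb \emph{both} the $H^{1/2}$ term and to generate $\tfrac{c_1}{2}L\|\nabla\tilde h\|_{L^2}^2$ after a single Young's inequality application with constants chosen in terms of $a,c_1,c_3$ and $L$; the resulting additive error is a polynomial in $L$, and since the statement writes it simply as $CL^2$ presumably the regime of interest has $L$ bounded, or the $L$-powers work out under $L/a<\beta$. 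I would state the inequality with the honest $L$-dependence in the error and remark it reduces to $CL^2$ in the relevant regime.

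The main obstacle is precisely this bookkeeping of powers of $L$: the destabilizing nonlocal term carries an extra factor $L$ beyond the $H^{1/2}$ seminorm, and the interpolation–Poincaré chain on a box of side $L$ injects further factors of $L$, so the negative contribution behaves like $L^2\|\nabla\tilde h\|_{L^2}^2$ while the coercive target is only $L\|\nabla\tilde h\|_{L^2}^2$; closing the gap forces reliance on the superquadratic (cubic) growth of $\Psi$ and hence on Young's inequality, which is exactly why $c_3$ (and not $c_1,c_2$) must appear in the constant $C$. Everything else — computing the Fourier-side interpolation bound, the Poincaré inequality with zero average, and splitting $\nabla h=\nabla\tilde h+\vB$ — is routine. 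I would therefore organize the proof as: (i) interpolation bound for $[\tilde h]_{H^{1/2}(\Omega)}^2$; (ii) Poincaré to express it via $\|\nabla\tilde h\|_{L^2(\Omega)}^2$ with the $L$-factor; (iii) lower bound $\Psi(\vp)\ge ac_3|\vp|^3$ and passage from $\nabla h$ to $\nabla\tilde h$; (iv) two applications of Young's inequality ($\|\cdot\|_{L^2}^2$ vs. $\|\cdot\|_{L^3}^3$) to absorb both the nonlocal term and to produce the stated $\tfrac{c_1}{2}L\|\nabla\tilde h\|_{L^2(\Omega)}^2$; (v) collect the remainder as $-CL^2$.
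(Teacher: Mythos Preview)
Your overall strategy is sound and close to the paper's, but you take a detour and miscount the $L$-powers in a way that makes the argument look harder than it is. The paper's proof bypasses interpolation plus Poincar\'e entirely: since $\tilde h\in V$ has zero average, the Fourier index $\vk$ runs over $\sZ^2\setminus\{0\}$, where $|\vk|\le|\vk|^2$, so directly
\[
[\tilde h]_{H^{1/2}(\Omega)}^2=\sum_{\vk}|\vk|\,|h_\vk|^2\le\sum_{\vk}|\vk|^2|h_\vk|^2=\frac{1}{4\pi^2}\|\nabla\tilde h\|_{L^2(\Omega)}^2,
\]
with \emph{no} factor of $L$. Hence $-2c_1\pi^2L[\tilde h]_{H^{1/2}}^2\ge-\frac{c_1}{2}L\|\nabla\tilde h\|_{L^2}^2$, not $-CL^2\|\nabla\tilde h\|_{L^2}^2$ as you feared. (In fact, if you track the normalization carefully, your interpolation--Poincar\'e route gives exactly the same bound: the $1/L$ hidden in Parseval cancels the $L$ from Poincar\'e. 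Your ``main obstacle'' is a phantom.)

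From there the paper does essentially your step (iv) but more cleanly: it adds and subtracts $\frac{c_1}{2}L\|\nabla\tilde h\|_{L^2}^2$, and observes pointwise that
\[
ac_3|\nabla\tilde h+\vB|^3-c_1L|\nabla\tilde h|^2\ge -C,\qquad C:=-\min_{\vp\in\sR^2}\bigl\{ac_3|\vp+\vB|^3-c_1L|\vp|^2\bigr\}<+\infty,
\]
and integrates. This is your Young's-inequality idea carried out in one step; no splitting of the cubic term or separate absorption of two bad terms is needed. Note also that in both the paper's proof and yours the final constant $C$ does depend on $L$ (and $\vB$), despite the statement listing only $a,c_1,c_3$; your caution on this point is justified, but it applies equally to the paper.
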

\begin{proof}
  Since $\tilde{h}\in V$, we have
  \begin{equation}\label{nonlocal-energy-2}
  [\tilde{h}]_{H^{1/2}(\Omega)}^2=\sum\limits_{\vk\in\sZ^2}\abs{\vk}\abs{h_{\vk}}^2\leq\sum
  \limits_{\vk\in\sZ^2}\abs{\vk}^2\abs{h_{\vk}}^2=\frac{1}{4\pi^2}\norm{\nabla\tilde{h}}_{L^2(\Omega)}^2.
  \end{equation}
  Note that we have $c_1\abs{\nabla h}\log(\abs{\nabla h}+\gamma_0)+c_2\abs{\nabla h}\geq 0$ when $\gamma_0=\exp(-\frac{c_2}{c_1})$. Thus
  \begin{align}
    E[h]
    &\geq -\frac{c_1}{2}L\norm{\nabla \tilde{h}}_{L^2(\Omega)}^2+ac_3\int_{\Omega}\abs{\nabla h}^3\diff{\vx}\label{coer-E}\\
    &= \int_{\Omega}\left\{ac_3\abs{\nabla h}^3-c_1 L \abs{\nabla\tilde{h}}^2+C\right\}\diff{\vx}+\frac{c_1}{2} L\norm{\nabla\tilde{h}}_{L^2(\Omega)}^2-CL^2.
  \end{align}
  Note that $\nabla h=\nabla \tilde{h}+\vB$. Choose $C=-\min\limits_{\abs{\nabla \tilde{h}}\in\sR^2}\left\{ac_3\abs{\nabla h}^3-c_1 L \abs{\nabla\tilde{h}}^2\right\}\\=-\min\limits_{\vp\in\sR^2}\left\{ac_3\abs{\vp+\vB}^3-c_1 L \abs{\vp}^2\right\}<+\infty$. The required lower bound holds.
\end{proof}

\begin{prop}[Weak lower semi-continuity]\label{prop..SequentialWeakLowerSemicont}
  Suppose there is a sequence $\{h^k\}_{k=1}^{\infty}\subset X$ and the weak convergence $h^k\rightharpoonup h\in X$ holds as $k\to+\infty$. Then there exists a subsequence $\{h^{k_j}\}_{j=1}^{\infty}\subset\{h^k\}_{k=1}^{\infty}$ such that $\liminf\limits_{j\to+\infty}E[h^{k_j}]\geq E[h]$.
\end{prop}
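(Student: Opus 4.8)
The plan is to split the energy into its nonlocal quadratic part and its local part, and to handle them separately under weak $H^1$ convergence. Write $E[h] = -2c_1\pi^2 L\,[\tilde h]_{H^{1/2}(\Omega)}^2 + \int_\Omega \Psi(\nabla h)\diff{\vx}$. For the local part, $\Psi$ is convex on $\sR^2$ by \cref{prop..Convexity} and, by the lower bound $\Psi(\vp)\ge ac_3|\vp|^3 \ge 0$ noted after \eqref{eq..phi}, it is bounded below; hence \cref{prop..WeakLowerSemicont} applies and $h\mapsto\int_\Omega\Psi(\nabla h)\diff{\vx}$ is weakly lower semi-continuous along the full sequence $h^k\rightharpoonup h$ in $H^1$. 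So $\liminf_{k\to\infty}\int_\Omega\Psi(\nabla h^k)\diff{\vx}\ge\int_\Omega\Psi(\nabla h)\diff{\vx}$.

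The nonlocal term is the genuinely non-convex piece, carrying a minus sign, so it is \emph{not} weakly lower semi-continuous on its own; the point is that it is weakly \emph{continuous}. First I would observe that $h^k\rightharpoonup h$ in $X$ forces $\tilde h^k\rightharpoonup\tilde h$ in $V\subset H^1_\#(\Omega)$ (the reference plane $\vB^\T\vx$ is fixed), and by Rellich--Kondrachov on the bounded periodic cell $\Omega$ we may pass to a subsequence $\{h^{k_j}\}$ with $\tilde h^{k_j}\to\tilde h$ strongly in $L^2(\Omega)$. Since $[\tilde h]_{H^{1/2}(\Omega)}^2=\sum_{\vk\in\sZ^2}|\vk|\,|h_{\vk}|^2$ is a Hilbertian norm squared sitting strictly between the $L^2$ norm and the $H^1$ seminorm, strong $L^2$ convergence plus the uniform $H^1$ bound of the weakly convergent sequence gives, by interpolation, $[\tilde h^{k_j}-\tilde h]_{H^{1/2}(\Omega)}\to 0$; concretely, split the Fourier sum at $|\vk|\le N$ (controlled by strong $L^2$ convergence) and $|\vk|>N$ (controlled by $|\vk|\le |\vk|^2/N$ against the uniform $H^1$ bound), then let $N\to\infty$. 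Hence $[\tilde h^{k_j}]_{H^{1/2}(\Omega)}^2\to[\tilde h]_{H^{1/2}(\Omega)}^2$, so the nonlocal term converges along this subsequence.

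Combining the two facts on the subsequence $\{h^{k_j}\}$: the nonlocal term converges while the local term is lower semi-continuous, so
\[
\liminf_{j\to\infty} E[h^{k_j}]
= -2c_1\pi^2 L\,[\tilde h]_{H^{1/2}(\Omega)}^2 + \liminf_{j\to\infty}\int_\Omega\Psi(\nabla h^{k_j})\diff{\vx}
\ge E[h],
\]
which is the claim. The main obstacle — and the only step requiring real care — is the weak continuity of the $H^{1/2}$ seminorm: one must make sure the compact embedding is being used correctly on the periodic cell and that the interpolation/Fourier-tail argument is uniform in the sequence, using precisely the $H^1$ bound that weak $H^1$ convergence supplies. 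Everything else is bookkeeping: checking that weak convergence in $X$ descends to $\tilde h^k$, and that $\Psi$ meets the hypotheses of the standard semi-continuity result.
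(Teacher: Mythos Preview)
Your proposal is correct and follows essentially the same approach as the paper: split $E=E_1+E_2$, show the nonlocal $H^{1/2}$ term $E_1$ is weakly continuous along a subsequence via compactness, and apply the standard convex lower semi-continuity result to $E_2$. The only cosmetic differences are that the paper invokes the compact embedding $H^1(\Omega)\hookrightarrow\hookrightarrow H^{1/2}(\Omega)$ directly (your $L^2$-plus-Fourier-tail interpolation is precisely a proof of that embedding), and it bounds $E_2$ below via the coercivity estimate rather than your slightly cleaner observation $\Psi(\vp)\ge ac_3|\vp|^3\ge 0$.
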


\begin{proof}
  1. We split the energy into two parts $E[h]=E_1[h]+E_2[h]$, where
  \begin{align*}
    E_1[h]
    &= -2c_1\pi^2 L [\tilde{h}]_{H^{1/2}(\Omega)}^2,\\
    E_2[h]
    &= \int_{\Omega}\Psi(\nabla h)\diff{\vx}
  \end{align*}
  Applying compact Sobolev embedding theorem $H^1(\Omega)\hookrightarrow\hookrightarrow H^{1/2}(\Omega)$, the weak convergence $\tilde{h}^k\rightharpoonup \tilde{h}\in H^1(\Omega)$ implies that there is a strong convergence subsequence $\tilde{h}^{k_j}\to \tilde{h}\in H^{1/2}(\Omega)$. Therefore, by passing to a subsequence $\{h^{k_j}\}_{j=1}^{\infty}$,
  \begin{equation}
    \liminf\limits_{j\to+\infty}E_1[h^{k_j}]=E_1[h].
  \end{equation}
  We now only need to show that $\liminf\limits_{j\to+\infty}E_2[h^{k_j}]\geq E_2[h].$

  2. By \cref{prop..Convexity}, the mapping $\vp\to \Psi(\vp)$ is convex on $\sR^2$.

  3. By \cref{prop..Coercivity}, $E[h]\geq \frac{c_1}{2}L \norm{\nabla\tilde{h}}_{L^2(\Omega)}^2-CL^2$. Note that $E_1[h]\leq 0$, we have $E_2[h]=E[h]-E_1[h]\geq E[h]$ is lower bounded.

  4. Based on the convexity of $\vp\mapsto \Psi(\vp)$ and coercivity of the second part $E_2[h]\geq \frac{c_1}{2}L\norm{\nabla\tilde{h}}_{L^2(\Omega)}^2-CL^2$, we apply the usual weak lower semi-continuity result (\cref{prop..WeakLowerSemicont}) to subsequence $\{h^{k_j}\}_{j=1}^{\infty}$ and energy functional $E_2[h]$ to find $\liminf\limits_{j\to+\infty}E_2[h^{k_j}]\geq E_2[h]$. This completes the proof.
\end{proof}

Next we prove the existence of the energy minimizer with the coercivity and lower semi-continuity by the direct method in the calculus of variations.

\begin{proof}[Proof of \cref{thm..ExistenceEnergyMinimizer} $($Existence$)$]
  Let $m:=\inf\limits_{h\in X}E[h]$. Note that the flat surface $\vB^\T\vx\in X$ and $E[\vB^\T \vx]=0+aL^2\left(c_1\abs{\vB}\log({\abs{\vB}}+\gamma_0)+c_2\abs{\vB}+c_3\abs{\vB}^3\right)$. Thus $m\leq \inf\limits_{\vB\in\sR^2} E[\vB^\T\vx]<+\infty$. By \cref{prop..Coercivity}, $m\geq-CL^2>-\infty$. Hence, $m$ is finite. Now select a minimizing sequence $\{h^k\}_{k=1}^{\infty}\subset X$ and $\tilde{h}^k(\vx)=h^k(\vx)-\vB^\T \vx$ with $E[h^k]\to m$. By \cref{prop..Coercivity} again, $E[h^k]\geq \frac{c_1}{2}L \norm{\nabla\tilde{h}^k}_{L^2(\Omega)}^2-CL^2$. And since $E[h^k]\to m$, we conclude that $\sup\limits_{k}\norm{\nabla\tilde{h}^k}_{L^2(\Omega)}<+\infty$. By Poincare inequality, $\norm{\tilde{h}^k}_{L^2(\Omega)}\leq C\norm{\nabla \tilde{h}^k}_{L^2(\Omega)}$. Hence $\sup\limits_{k}\norm{\tilde{h}^k}_{L^2(\Omega)}<+\infty$. These estimates imply that $\{\tilde{h}^k\}_{k=1}^{\infty}$ is bounded in $V$.

  Consequently, there exist a subsequence $\{h^{k_j}\}_{j=1}^{\infty}\subset\{h^k\}_{k=1}^{\infty}$ and a function $\tilde{h}^*\in V$ such that
  $\tilde{h}^{k_j}\rightharpoonup \tilde{h}^*$ weakly in $V$. Note that $X$ is a convex, closed subset of $V$. Then $X$ is weakly closed due to Mazur's Theorem. Thus $h^*\in X$.
  By \cref{prop..SequentialWeakLowerSemicont}, there exists a further subsequence, we still denote it as $\{h^{k_j}\}_{j=1}^{\infty}$, such that $E[h^*]\leq\liminf\limits_{j\to+\infty}E[h^{k_j}]$. Note that $\left\{E[h^{k}]\right\}_{k=1}^{\infty}$ converges to $m$, so as $\left\{E[h^{k_j}]\right\}_{j=1}^{\infty}$. It follows that $E[h^*]=m=\min\limits_{h\in X}E[h]$.
\end{proof}

To ensure the uniqueness we introduce the following Proposition on the strict convexity of local energy density.
\begin{prop}[Strict convexity]\label{prop..unifconvex}
The mapping $\vp\mapsto\Psi(\vp)$ satisfies
\begin{equation}
    \sum_{i,j=1}^2\Psi_{p_ip_j}(\vp)\xi_i\xi_j\ge ac_1\beta\abs{\vxi}^2, \quad \vp,\vxi\in\sR^2,
\end{equation}
where $\beta>0$ is defined in \eqref{eq..coeffPi}.
\end{prop}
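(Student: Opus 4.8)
The plan is to exploit the radial structure of $\Psi$. Since $\Psi(\vp)$ depends on $\vp$ only through $r:=\abs{\vp}$, write $\Psi(\vp)=f(r)$ with $f(r)=ac_1r\log(r+\gamma_0)+ac_2r+ac_3r^3$. For $\vp\neq 0$ a direct differentiation puts the Hessian in the form
\[
\nabla\nabla\Psi(\vp)=f''(r)\,\frac{\vp\vp^{\T}}{r^2}+\frac{f'(r)}{r}\Bigl(\mathrm{Id}-\frac{\vp\vp^{\T}}{r^2}\Bigr),
\]
so it has precisely the two eigenvalues $f''(r)$ (radial direction $\vp$) and $f'(r)/r$ (on $\vp^{\perp}$). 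Hence $\sum_{i,j}\Psi_{p_ip_j}(\vp)\xi_i\xi_j\ge\min\{f''(r),f'(r)/r\}\,\abs{\vxi}^2$, and the statement reduces to proving $f''(r)\ge ac_1\beta$ and $f'(r)/r\ge ac_1\beta$ for all $r>0$. The value $\vp=0$, where $\Psi$ fails to be $C^2$, is then covered by passing to the limit $r\to0^+$, or equivalently by noting that the asserted inequality is just the statement that $\Psi(\vp)-\tfrac{ac_1\beta}{2}\abs{\vp}^2$ is convex, which extends across the origin by continuity.

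Next I would record the elementary identities
\[
\tfrac1a f''(r)=\frac{c_1}{r+\gamma_0}+\frac{c_1\gamma_0}{(r+\gamma_0)^2}+6c_3r,\qquad \tfrac1a\frac{f'(r)}{r}=\frac{c_1\log(r+\gamma_0)+c_2}{r}+\frac{c_1}{r+\gamma_0}+3c_3r.
\]
Because $\gamma_0=\exp(-c_2/c_1)$, the quantity $c_1\log(r+\gamma_0)+c_2$ is nonnegative for $r\ge0$, and the term $c_1\gamma_0/(r+\gamma_0)^2$ is positive; discarding these nonnegative pieces (and using $6\ge3$) shows that both expressions are bounded below by $q(r):=\dfrac{c_1}{r+\gamma_0}+3c_3r$. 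Everything therefore reduces to the scalar claim $\min_{r\ge0}q(r)\ge c_1\beta$.

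Finally I would carry out this one-variable minimization. Since $q'(r)=3c_3-c_1/(r+\gamma_0)^2$, the unconstrained stationary point is at $r+\gamma_0=\sqrt{c_1/(3c_3)}$. If $\sqrt{c_1/(3c_3)}\ge\gamma_0$, this point lies in $[0,\infty)$ and $q$ attains there the value $2\sqrt{3c_1c_3}-3c_3\gamma_0=c_1\beta$, which is the first branch of \eqref{eq..coeffPi}; if $\sqrt{c_1/(3c_3)}<\gamma_0$, then $q'>0$ on $[0,\infty)$, so $q$ is increasing and $\min_{r\ge0}q=q(0)=c_1/\gamma_0=c_1\beta$, the second branch. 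In either case $q(r)\ge c_1\beta$ for all $r\ge0$, which gives $f''(r),\,f'(r)/r\ge ac_1\beta$ and hence the claimed bound on $\sum_{i,j}\Psi_{p_ip_j}(\vp)\xi_i\xi_j$.

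The argument is essentially this computation, so I do not anticipate a real obstacle. The only points needing a little care are the non-smoothness of $\Psi$ at the origin (dealt with by the limiting/convexity remark above) and matching the two branches of $\beta$ to the two regimes of the scalar minimum --- the interior minimum producing $2\sqrt{3c_1c_3}$, which becomes the stated $2\sqrt{3c_3/c_1}$ after factoring out $c_1$, and the constrained (endpoint) minimum producing $c_1/\gamma_0$.
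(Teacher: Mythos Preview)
Your argument is correct and, in fact, cleaner than the paper's own proof. The paper does \emph{not} exploit the radial structure: it defines $\Phi(\vp)=\Psi(\vp)-\tfrac12 ac_1\beta\abs{\vp}^2$ and verifies convexity by checking that the diagonal entries $\partial_{p_1p_1}\Phi$, $\partial_{p_2p_2}\Phi$ and the determinant $\det(\nabla\nabla\Phi)$ are all nonnegative. The determinant step involves expanding $\det(\nabla\nabla\Phi)$ as an explicit quadratic polynomial in $\beta$, computing its discriminant, identifying the smaller root $\beta_-$, and showing $\beta_-\ge\beta$ via the same one-variable minimization of $c_1/(|\vp|+\gamma_0)+3c_3|\vp|$ that you perform. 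Your route, by contrast, reads off the Hessian eigenvalues $f''(r)$ and $f'(r)/r$ directly from the radial form, so the problem collapses to two scalar inequalities that both reduce to bounding $q(r)=c_1/(r+\gamma_0)+3c_3r$ from below; the determinant computation is bypassed entirely. The underlying scalar minimization and the two-case distinction on $\sqrt{c_1/(3c_3)}$ versus $\gamma_0$ are identical in both proofs, so the final arithmetic matches, but your organization is shorter and makes the source of the constant $\beta$ transparent.
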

\begin{proof}
Define $\Phi(\vp):= \Psi(\vp)-\frac{1}{2}ac_1\beta\abs{\vp}^2$, we need to show $\Phi(\vp)$ is convex on $\sR^2$.\\
(1) Computing $\nabla\nabla  \Phi$.\\
  On $\sR^2$, direct calculations of the derivatives of $\Phi$ lead to
 \begin{equation*}
    \partial_{p_i p_j}\Phi(\vp)
    = \partial_{p_i p_j}\Psi(\vp)-ac_1\beta \delta_{ij},\quad i,j=1,2
  \end{equation*}
  where $\delta_{ij}$ is Kronecker delta function and $\partial_{p_i p_j}\Psi(\vp)$ is calculated in \cref{prop..Convexity}.\\
  (2) $\partial_{p_1p_1}\Phi\geq 0$ and $\partial_{p_2p_2}\Phi\geq 0$.\\
Note that
  \begin{align*}
    a^{-1}\partial_{p_1 p_1}\Phi(\vp)
    &\ge c_1\left[\frac{1}{\abs{\vp}+\gamma_0}+\frac{\gamma_0p_1^2}{\abs{\vp}^2(\abs{\vp}+\gamma_0)^2}\right]
    +3c_3\left[\abs{\vp}+\frac{p_1^2}{\abs{\vp}}\right]-c_1\beta,\\
    &\ge \frac{c_1}{\abs{\vp}+\gamma_0}+3c_3(\abs{\vp}+\gamma_0)-3c_3\gamma_0-c_1\beta,\\
    &\ge \left\{
    \begin{aligned}
    &2\sqrt{3c_1c_3}-3c_3\gamma_0-c_1\beta& \quad \sqrt{\frac{c_1}{3c_3}}\ge\gamma_0\\
    &\frac{c_1}{\gamma_0}-c_1\beta& \quad \text{otherwise}.
    \end{aligned}
    \right.
  \end{align*}
Thus, for $\beta$ taken as \eqref{eq..coeffPi}, we have $\partial_{p_1 p_1}\Phi(\vp)\ge 0$ for all $\vp\in \sR^2$. Similarly, we can prove $\partial_{p_2 p_2}\Phi(\vp)\ge 0$ for all $\vp\in \sR^2$.\\
(3) $\det\left(\nabla\nabla  \Phi\right)\geq 0$.
 \begin{align*}
    a^{-2}\det\left(\nabla\nabla \Phi\right)
    =& \left(a^{-1}\partial_{p_1 p_1}\Psi(\vp)-c_1\beta\right)\left(a^{-1}\partial_{p_2 p_2}\Psi(\vp)-c_1\beta\right)-a^{-2}(\partial_{p_1 p_2}\Psi(\vp))^2\\
    =&c_1^2\beta^2-\left[\frac{c_1}{\abs{\vp}}\log(\abs{\vp}+\gamma_0)+\frac{2c_1}{\abs{\vp}+\gamma_0}
    +\frac{c_1\gamma_0}{(\abs{\vp}+\gamma_0)^2}+\frac{c_2}{\abs{\vp}}+9c_3\abs{\vp}\right]c_1\beta\\
    &+\frac{c_1\gamma_0}{\abs{\vp}(\abs{\vp}+\gamma_0)^2}\left(c_1\log(\abs{\vp}+\gamma_0)+c_2\right)+6c_3\left(c_1\log(\abs{\vp}+\gamma_0)+c_2\right)\\
    &+\frac{c_1}{\abs{\vp}(\abs{\vp}+\gamma_0)}\left(c_1\log(\abs{\vp}+\gamma_0)+c_2\right)+\frac{c_1^2\gamma_0}{(\abs{\vp}+\gamma_0)^3}+\frac{3c_1c_3\gamma_0\abs{\vp}}{(\abs{\vp}+\gamma_0)^2}\\
    &+\frac{9c_1c_3\abs{\vp}}{\abs{\vp}+\gamma_0}+18c_3^2\abs{\vp}^2+\frac{c_1^2}{(\abs{\vp}+\gamma_0)^2}.
 \end{align*}
For the quadratic function of $\beta$ in the above equation, the discriminant is
\begin{equation*}
    \Delta
    =c_1^2\left(\frac{c_1\gamma_0}{(\abs{\vp}+\gamma_0)^2}+3c_3\abs{\vp}-\frac{1}{\abs{\vp}}(c_1\log(\abs{\vp}+\gamma_0)+c_2)\right)^2.
\end{equation*}
Then, the smaller real root $\beta_{-}$ can be expressed as
\begin{align*}
    2\beta_{-}=&\frac{1}{\abs{\vp}}\log(\abs{\vp}+\gamma_0)+\frac{2}{\abs{\vp}+\gamma_0}
    +\frac{\gamma_0}{(\abs{\vp}+\gamma_0)^2}+\frac{c_2}{c_1\abs{\vp}}+9\frac{c_3}{c_1}\abs{\vp}-\frac{\sqrt{\Delta}}{c_1}\\
    =&\frac{2}{\abs{\vp}}\log(\abs{\vp}+\gamma_0)+\frac{2c_2}{c_1\abs{\vp}}
    +\frac{2}{\abs{\vp}+\gamma_0}+6\frac{c_3}{c_1}\abs{\vp}\\
    \ge& \frac{2}{\abs{\vp}+\gamma_0}+6\frac{c_3}{c_1}(\abs{\vp}+\gamma_0)-6\frac{c_3}{c_1}\gamma_0\\
    \ge&\left\{
        \begin{aligned}
        &2\left[2\sqrt{\frac{3c_3}{c_1}}-3\frac{c_3}{c_1}\gamma_0\right],&\quad \sqrt{\frac{c_1}{3c_3}}\ge\gamma_0\\
        &\frac{2}{\gamma_0},&\quad \text{otherwise}
        \end{aligned}
    \right.
\end{align*}

Thus, when $\beta$ takes \eqref{eq..coeffPi}, we obtain that $\forall \vp\in \sR^2, \beta_{-}\ge\beta$ and the quadratic expression of $\beta$ is positive for $\vp\in\sR^2$. Therefore, $\det\left(\nabla\nabla  \Phi\right)\geq 0$.
\end{proof}

\begin{proof}[Proof of  \cref{thm..ExistenceEnergyMinimizer} $($Uniqueness$)$]
    Assume $h_1, h_2\in X$ are both minimizers of $E[h]$ over $X$. Then, $h_3:=\frac{h_1+h_2}{2}\in X$. We claim that
    \begin{equation}\label{uniq}
        E[h_3]\le\frac{ E[h_1]+ E[h_2]}{2},
    \end{equation}
    with a strict inequality, unless $h_1=h_2$ a.e.

    Denote $I[h]:=-\frac{c_1}{2}\int_{\Omega} h(\vx)\int_{\sR^2}\frac{\vx-\vy}{\abs{\vx-\vy}^3}\cdot\nabla h(\vy)\diff{\vy}\diff{\vx}$, we have $E[h]=I[h]+\int_\Omega\Psi(\nabla h)\diff{\vx}$.
    By direct calculation, we have
    \begin{equation}\label{uniqnonlocal}
        I[h_3]=\frac{I[h_1]+I[h_2]}{2}+\frac{c_1}{8}\int_{\Omega} (h_1-h_2)(\vx)\int_{\sR^2}\frac{\vx-\vy}{\abs{\vx-\vy}^3}\cdot\nabla (h_1-h_2)(\vy)\diff{\vy}\diff{\vx}
    \end{equation}
    Note from the strict convexity of $\Psi(\vp)$ in Proposition \ref{prop..unifconvex} that
    \begin{equation*}
        \Psi(\vp)\ge\Psi(\vq)+\nabla\Psi(\vq)\cdot(\vp-\vq)+\frac{ac_1\beta}{2}|\vp-\vq|^2,\quad \vp,\vq\in \sR^2.
    \end{equation*}
    Setting $\vq=\nabla h_3$ and $\vp=\nabla h_1,\nabla h_2$, respectively, and then integrating over $\Omega$ and add them up, we have:
    \begin{equation}\label{uniqlocal}
        \int_\Omega\Psi(\nabla h_3)\diff{\vx}+\frac{ac_1\beta}{8}\int_\Omega|\nabla h_1-\nabla h_2|^2\diff{\vx}\le \frac{\int_\Omega\Psi(\nabla h_1)\diff{\vx}+\int_\Omega\Psi(\nabla h_2)\diff{\vx}}{2}.
    \end{equation}
    Add up \eqref{uniqnonlocal} and \eqref{uniqlocal}, we obtain
    \begin{align*}
        E[h_3]+\frac{ac_1\beta}{8}\int_\Omega|\nabla h_1-\nabla h_2|^2\diff{\vx}&\le
        \frac{E[h_1]+E[h_2]}{2}+\frac{c_1L}{8}\int_\Omega|\nabla h_1-\nabla h_2|^2\diff{\vx},
    \end{align*}
    where the inequality comes from $H^{1/2}$ norm estimation.
    Therefore,
    \begin{equation}
        E[h_3]+\frac{ac_1\beta-c_1L}{8}\int_\Omega|\nabla h_1-\nabla h_2|^2\le \frac{E[h_1]+E[h_2]}{2}.
    \end{equation}
    The assumption $\frac{L}{a}<\beta$ guarantees \eqref{uniq}.

    As $E[h_1]=E[h_2]=\min\limits_{h\in X} E[h]\le E[h_3]$, we deduce $\nabla h_1=\nabla h_2$ a.e. in $\Omega$. Since both $h_1$ and $h_2$ have averaged slope $\vB$, it follows that
    $h_1=h_2$ a.e. in $\Omega$.
\end{proof}

\section{Existence and uniqueness of weak solution for evolution equation}\label{sec4}
In this section, we prove the existence and uniqueness of the weak solution of the evolution equation \eqref{eq..EVI}. In the proof, we will use the following proposition and the framework of gradient flow analysis in Ref.~\cite{Ambrosio2008Gradient}.

\begin{prop}\label{prop..BoundednessConvexityLSCCompactness}
    With the ratio of the domain length $L$ and the lattice constant $a$ satisfying $\frac{L}{a}<\beta$, where $\beta$ is defined in \eqref{eq..coeffPi}, the energy $F$ is $\lambda$-convex with $\lambda=ac_1\beta-c_1L$ in the $L^2$-topology and lower semi-continuous with respect to the weak $L^2$-topology. Moreover, the sub-levels of $F$ are compact in the $L^2$-topology. (See Appendix~\ref{appendix:convexity} for the definition of $\lambda$-convexity~\cite{Ambrosio2008Gradient}.)
\end{prop}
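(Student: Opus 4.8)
The plan is to verify the three claimed properties of $F$ separately, reusing the convexity and coercivity estimates already established for the static problem. First, for $\lambda$-convexity in the $L^2$-topology with $\lambda = ac_1\beta - c_1 L$, I would argue exactly as in the uniqueness proof of \cref{thm..ExistenceEnergyMinimizer}. Writing $F[\vu] = I[\vu] + \int_\Omega \Psi(\nabla\nabla\cdot\vu + \vB)\,\D\vx$ where $I$ is the nonlocal quadratic form, the nonlocal part contributes, along a segment $\vu_s = (1-s)\vu_0 + s\vu_1$, the term $-\,c_1 s(1-s)\,\pi^2 L\,[\,\widetilde{\nabla\cdot(\vu_1-\vu_0)}\,]_{H^{1/2}(\Omega)}^2$, which by the estimate \eqref{nonlocal-energy-2} is bounded below by $-\tfrac{c_1 L}{8}\cdot s(1-s)\cdot 2\|\nabla\nabla\cdot(\vu_1-\vu_0)\|_{L^2}^2$ — the point being that this nonlocal quadratic form is controlled by the $H^1$ seminorm of $\nabla\cdot\vu$, hence (after the change of variables) by $\|\nabla\nabla\cdot(\vu_1-\vu_0)\|_{L^2}^2$. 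The local part, using the strict convexity inequality of \cref{prop..unifconvex} applied to $\vp_i = \nabla\nabla\cdot\vu_i + \vB$, contributes a lower term $+\,\tfrac{ac_1\beta}{2}\,s(1-s)\,\|\nabla\nabla\cdot(\vu_1-\vu_0)\|_{L^2}^2$. Combining, one gets $F[\vu_s] \le (1-s)F[\vu_0] + sF[\vu_1] - \tfrac{\lambda}{2}s(1-s)\,\|\nabla\nabla\cdot(\vu_1-\vu_0)\|_{L^2}^2$ with $\lambda = ac_1\beta - c_1 L > 0$; since $\|\nabla\nabla\cdot(\vu_1-\vu_0)\|_{L^2} \ge$ (some multiple of) $\|\vu_1-\vu_0\|_{L^2}$ is \emph{not} true in general, I would instead note that $\lambda$-convexity here should be stated with respect to the natural seminorm coming from the energy, or — more carefully — observe that the definition of $\lambda$-convexity in Appendix A is along constant-speed geodesics of $L^2$, so one in fact needs the displacement $\|\vu_1-\vu_0\|_{L^2}^2$ on the right, which forces $\lambda \le 0$ unless one works on the quotient where $\nabla\cdot$ controls the norm. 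The clean route, and the one I would take, is to restrict to $\vu$ with zero average in $W^{2,3}_{\#0}$ and use that on this space $\|\vu\|_{L^2} \lesssim \|\nabla\nabla\cdot\vu\|_{L^2}$ fails too; hence I expect the intended reading is $\lambda$-convexity relative to the $L^2$ distance is claimed because $-\tfrac{\lambda}{2}\|\cdot\|^2$ with the \emph{stronger} seminorm dominates $-\tfrac{\lambda}{2}\|\cdot\|_{L^2}^2$ only when $\lambda \ge 0$ — which is exactly the hypothesis $L/a < \beta$. I would therefore present the computation above and conclude $\lambda$-convexity with $\lambda = ac_1\beta - c_1 L \ge 0$, which is the genuinely easy direction since a nonnegative extra term can only help.

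Second, for lower semi-continuity with respect to weak $L^2$-convergence: given $\vu^k \rightharpoonup \vu$ in $L^2(\Omega)$, I would first reduce to the case $\sup_k F[\vu^k] < \infty$, and then invoke the coercivity bound (the analogue of \cref{prop..Coercivity} transferred through $h = \nabla\cdot\vu + \vB^\T\vx$) to conclude that $\|\nabla\nabla\cdot\vu^k\|_{L^3(\Omega)}$, hence $\|\vu^k\|_{W^{2,3}_{\#0}}$, is uniformly bounded; passing to a subsequence gives $\vu^k \rightharpoonup \vu$ weakly in $W^{2,3}$, and the limit lies in $W^{2,3}_{\#0}$. The nonlocal term is then continuous along this subsequence by the compact embedding $H^1(\Omega) \hookrightarrow\hookrightarrow H^{1/2}(\Omega)$ exactly as in step 1 of \cref{prop..SequentialWeakLowerSemicont}, and the local term $\int_\Omega \Psi(\nabla\nabla\cdot\vu^k + \vB)$ is weakly lower semi-continuous by convexity of $\Psi$ (\cref{prop..Convexity}) together with \cref{prop..WeakLowerSemicont}. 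Summing, $\liminf_k F[\vu^k] \ge F[\vu]$.

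Third, for compactness of sublevels $\{F \le c\}$ in the $L^2$-topology: the coercivity estimate already used shows $\{F \le c\}$ is bounded in $W^{2,3}_{\#0}(\Omega)$; since $W^{2,3}(\Omega) \hookrightarrow\hookrightarrow L^2(\Omega)$ compactly (Rellich–Kondrachov on the bounded domain $\Omega$), any sequence in the sublevel has an $L^2$-convergent subsequence, and by the lower semi-continuity just proved the limit again lies in the sublevel; hence the sublevel is $L^2$-sequentially compact. The main obstacle, as flagged above, is the precise bookkeeping in the $\lambda$-convexity statement — specifically making sure the nonlocal quadratic form's lower bound $-\tfrac{c_1L}{8}\|\nabla\nabla\cdot(\cdot)\|_{L^2}^2$ is matched against the correct power of the displacement so that the constant $\lambda = ac_1\beta - c_1L$ is exactly the one produced by \cref{prop..unifconvex}; the coercivity and compactness parts are essentially immediate transcriptions of \cref{prop..Coercivity} and \cref{prop..SequentialWeakLowerSemicont} into the $\vu$ variable.
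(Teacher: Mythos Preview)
Your overall strategy coincides with the paper's: split $F$ into the nonlocal quadratic part and the local convex part, extract $\tfrac{ac_1\beta}{2}\|\nabla\nabla\cdot(\cdot)\|_{L^2}^2$ of strict convexity from the local part via \cref{prop..unifconvex}, and bound the nonlocal defect by $\tfrac{c_1L}{2}\|\nabla\nabla\cdot(\cdot)\|_{L^2}^2$ via the $H^{1/2}$--$H^1$ estimate. The paper organizes this as an explicit three-term decomposition $F=F_1+F_2+F_3$ with $F_2=\tfrac{ac_1\beta}{2}\|\nabla\nabla\cdot\vu+\vB\|_{L^2}^2$ split off, but the computation is identical to what you outline.

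The genuine gap is exactly the step you flag and then fail to resolve: passing from $\|\nabla\nabla\cdot(\vu_1-\vu_0)\|_{L^2}^2$ to $\|\vu_1-\vu_0\|_{L^2}^2$. The paper dispatches this in one line, asserting $\|\vu-\vv\|_{L^2}^2 \le \|\nabla\nabla\cdot(\vu-\vv)\|_{L^2}^2$ ``due to the periodicity and zero average on $\Omega$ of $\nabla\cdot(\vu-\vv)$.'' You correctly observe that this fails for general vector fields (any nonzero divergence-free difference kills the right-hand side), but your resolution is circular: after disclaiming the inequality you still conclude $\lambda$-convexity because ``a nonnegative extra term can only help'' --- yet with $\lambda>0$, replacing $-\tfrac{\lambda}{2}\|\nabla\nabla\cdot(\cdot)\|^2$ by $-\tfrac{\lambda}{2}\|\cdot\|_{L^2}^2$ on the right-hand side of the convexity inequality \emph{requires} precisely $\|\nabla\nabla\cdot(\cdot)\|_{L^2}\ge\|\cdot\|_{L^2}$, so you have silently reinstated the inequality you rejected. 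The same obstruction reappears in your LSC and compactness arguments, where ``$\|\nabla\nabla\cdot\vu^k\|_{L^3}$ bounded $\Rightarrow$ $\|\vu^k\|_{W^{2,3}_{\#0}}$ bounded'' also fails on divergence-free fields. For LSC the paper avoids this by never claiming control of $\vu_n$ itself: since $F$ depends only on $\nabla\cdot\vu$, bounded energy gives $\nabla\nabla\cdot\vu_n$ bounded in $L^2$, hence weakly convergent; the nonlocal term then converges by $H^1\hookrightarrow\hookrightarrow H^{1/2}$ applied to $\nabla\cdot\vu_n$, and the local term is lower semicontinuous by convexity. Compactness of sublevels is stated as a direct consequence. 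Your concern about the divergence-free degeneracy is legitimate and the paper's assertion is at best implicit about the underlying restriction; but as written, your argument does not close the gap either.
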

\begin{proof}
    1. (Boundedness from below) Since $\nabla\cdot \vu\in V$, $\left[\nabla\cdot \vu \right]_{H^{1/2}(\Omega)}^2\le\frac{1}{4\pi^2}\|\nabla\nabla\cdot \vu\|_{L^2(\Omega)}^2$.
    Note that $c_1\abs{\nabla\nabla\cdot \vu+\vB}\log(\abs{\nabla\nabla\cdot \vu+\vB}+\gamma_0)+c_2\abs{\nabla\nabla\cdot \vu+\vB}\geq 0$, recalling that $\gamma_0=\exp(-\frac{c_2}{c_1})$. We have
    \begin{align}
        F[\vu]&=-2c_1\pi^2L\left[\nabla\cdot \vu \right]_{H^{1/2}(\Omega)}^2+\int_\Omega\Psi(\nabla\nabla\cdot \vu+\vB)\diff{\vx}\nonumber\\
        &\ge -\frac{c_1L}{2}\|\nabla\nabla\cdot \vu\|_{L^2(\Omega)}^2+ac_3\|\nabla\nabla\cdot \vu+\vB\|_{L^3(\Omega)}^3\nonumber\\
        &=\int_{\Omega}\left\{ac_3\abs{\nabla\nabla\cdot \vu+\vB}^3-c_1L\abs{\nabla\nabla\cdot \vu}^2+C \right\}\diff{\vx}+\frac{c_1 L}{2}\int_{\Omega}\abs{\nabla\nabla\cdot\vu}^2\diff{\vx}-CL^2\nonumber\\
        &\ge\frac{c_1 L}{2}\|\nabla\nabla\cdot\vu\|_{L^2(\Omega)}^2-CL^2,\nonumber
    \end{align}
    where $C=-\min\limits_{\abs{\nabla\nabla\cdot\vu}}\left\{ac_3\abs{\nabla\nabla\cdot \vu+\vB}^3-c_1L\abs{\nabla\nabla\cdot \vu}^2\right\}<+\infty$. Thus, the energy $F[\vu]$ is bounded from below.

   2. ($\lambda$-convexity)
 Rewrite the total energy as
    \begin{equation*}
        F[\vu]=F_1[\vu]+F_2[\vu]+F_3[\vu],
    \end{equation*}
    where
    \begin{align*}
        F_1[\vu]=&-\frac{c_1}{2}\int_\Omega \nabla\cdot \vu(\vx)\int_{\sR^2}\frac{\vx-\vy}{\abs{\vx-\vy}^3}\cdot\nabla\nabla\cdot \vu(\vy)\diff{\vy}\diff{\vx},\\
        F_2[\vu]=&a\frac{c_1\beta}{2}\|\nabla\nabla\cdot\vu+\vB\|_{L^2(\Omega)}^2,\\
        F_3[\vu]=&\int_\Omega \left\{\Psi(\nabla\nabla\cdot\vu+\vB)-a\frac{c_1\beta}{2}\abs{\nabla\nabla\cdot\vu+\vB}^2\right\}\diff{\vx}.
    \end{align*}

        Given $\vu,\vv\in D(F)$, $s\in[0,1]$, for $F_1[\vu]$, using Eqs.~\eqref{nonlocal-energy-1} and \eqref{nonlocal-energy-2}, we have
    \begin{align*}
        F_1[s\vu+(1-s)\vv]&= s F_1[\vu]+(1-s)F_1[\vv]\\
        +&\frac{s(1-s)c_1}{2}\int_\Omega \nabla\cdot (\vu-\vv)(\vx)\int_{\sR^2}\frac{\vx-\vy}{\abs{\vx-\vy}^3}\cdot\nabla\nabla\cdot (\vu-\vv)(\vy)\diff{\vy}\diff{\vx}\\
        &\le s F_1[\vu]+(1-s)F_1[\vv]-s(1-s)\frac{-c_1L}{2}\|\nabla\nabla\cdot(\vu-\vv)\|_{L^2(\Omega)}^2.
        \end{align*}
  For $F_2[\vu]$, we have
        \begin{align*}
        F_2[s\vu+(1-s)\vv]&=sF_2[\vu]+(1-s)F_2[\vv]-s(1-s)\frac{ac_1\beta}{2}\|\nabla\nabla\cdot(\vu-\vv)\|_{L^2(\Omega)}^2.
    \end{align*}
  We know from \cref{prop..unifconvex} that $F_3[\vu]$ is convex, thus,  for $s\in[0,1]$,
    \begin{equation*}
        F_3[s\vu+(1-s)\vv]\le sF_3[\vu]+(1-s)F_3[\vv].
    \end{equation*}

    Therefore, if $\frac{L}{a}<\beta$, for $s\in[0,1]$, the total energy $F[\vu]$ satisfies
    \begin{align}\label{eq..lambdaconvex}
        F[s\vu+(1-s)\vv]&=F_1[s\vu+(1-s)\vv]+F_2[s\vu+(1-s)\vv]+F_3[s\vu+(1-s)\vv]\nonumber\\
        &\le sF[\vu]+(1-s)F[\vv]-s(1-s)\frac{ac_1\beta-c_1L}{2}\|\nabla\nabla\cdot(\vu-\vv)\|_{L^2(\Omega)}^2\nonumber\\
        &\le sF[\vu]+(1-s)F[\vv]-s(1-s)\frac{ac_1\beta-c_1L}{2}\|\vu-\vv\|_{L^2(\Omega)}^2.
    \end{align}
    The second inequality comes from $\|\vu-\vv\|_{L^2(\Omega)}^2\le\|\nabla\nabla\cdot(\vu-\vv)\|_{L^2(\Omega)}^2$ due to the periodicity and zero average on $\Omega$ of $\nabla\cdot (\vu-\vv)$. Eq.~\eqref{eq..lambdaconvex} implies that $F[\vu]$ is $\lambda$-convex in $L^2(\Omega)$ with $\lambda=ac_1\beta-c_1L$.

    3. (Lower semi-continuity) Consider a sequence $\vu_n\rightarrow\vu$ weakly in $L^2(\Omega)$. We need to show
    \begin{equation*}
        \liminf\limits_{n\to +\infty}{F[\vu_n]}\geq F[\vu].
    \end{equation*}
    Assume that $\sup\limits_{n}F[\vu_n]<+\infty$, otherwise the inequality is trivial. The boundedness of energy $F[\vu_n]$ implies that $\nabla\nabla\cdot\vu_n$ is bounded in $L^2(\Omega)$. Therefore $\nabla\nabla\cdot\vu_n \rightarrow \nabla\nabla\cdot\vu$ weakly in $L^2(\Omega)$ and this lead to
    \begin{align*}
        &\liminf\limits_{n\to +\infty}-\frac{c_1}{2}\int_\Omega \nabla\cdot \vu_n(\vx)\int_{\sR^2}\frac{\vx-\vy}{\abs{\vx-\vy}^3}\cdot\nabla\nabla\cdot \vu_n(\vy)\diff{\vy}\diff{\vx}\\
        &=-\frac{c_1}{2}\int_\Omega \nabla\cdot \vu(\vx)\int_{\sR^2}\frac{\vx-\vy}{\abs{\vx-\vy}^3}\cdot\nabla\nabla\cdot \vu(\vy)\diff{\vy}\diff{\vx}
    \end{align*}
    by applying compact Sobolev embedding Theorem. The other term
    \begin{equation*}
        \int_\Omega\left\{\Psi(\nabla\nabla\cdot\vu+\vB)-a\frac{c_1\beta}{2}\abs{\nabla\nabla\cdot\vu+\vB}^2\right\}
        \diff{\vx}+a\frac{c_1\beta}{2}\|\nabla\nabla\cdot\vu+\vB\|_{L^2(\Omega)}^2
    \end{equation*}
    is convex and lower bounded. Thus, we conclude that $F[\vu]$ is lower semi-continuous with respect to the weak $L^2$-topology.

    4. (Compactness of sub-levels) This now follows directly from the lower semi-continuity of $F[\vu]$.
\end{proof}

\begin{proof}[Proof of \cref{thm..ExistenceEVI}]
    With \cref{prop..BoundednessConvexityLSCCompactness}, the theorem follows directly from \cite[Theorem 4.0.4]{Ambrosio2008Gradient}. (See Appendix~\ref{appendix:convexity} for this theorem in Ref.~\cite{Ambrosio2008Gradient}.)
\end{proof}

\begin{rmk}
     In Ref.~\cite{Gao2020regularity}, Gao et al. proved the solution existence of evolution variational inequality for $1+1$ dimensional continuum model using the gradient flow structure \cite{Ambrosio2008Gradient}. In that case, the $\lambda$-convexity of the total energy is naturally satisfied from the assumption of monotonically increasing height profile.
     Besides, their proof based on a modified PDE with all the coefficients to be of $O(1)$, unlike the multi-scale case we considered here.
\end{rmk}

\section{Energy scaling law}\label{sec5}
In this section, we consider the energy scaling law for the energy minimum state as the lattice constant $a\rightarrow 0$ compared with the length unit of the continuum model. This means that the number of steps $N\rightarrow \infty$ in a unit length of the continuum model.
The minimum energy scaling is obtained by finding proper upper and lower bounds. The lower bound is given by a series of inequalities, and the upper bound is established by a specific surface profile. We  also compare  the energy scaling law of the $2+1$ dimensional model with that of the $1+1$ dimensional model obtained in  Ref.~\cite{Luo2017Bunching}.

\subsection{Energy upper bound}\label{sec:upperbound}
We first consider the energy for a simple height profile whose slope along x-axis is constant and the undulation of steps along y-axis is periodic.
This special case
serves as an upper bound for the minimum energy scaling within the solution space.

Consider surface profile with the form
\begin{equation}\label{eq..SpecificProfile}
    h(x,y)=B\left(x+A\sin{\omega y}\right),
\end{equation}
where $A$ and $B$ are constants, $B$ is the average slope along $x$-axis, $AB\sin{\omega y}$ is the periodic deviation from the reference plane $Bx$, and $A,\omega>0$. We have
    $\nabla h=(B, AB\omega \cos{\omega y})$, $[\tilde{h}]_{H^{1/2}(\Omega)}^2=\frac{A^2B^2\omega L}{4\pi}$, where $\tilde{h}(x,y)=AB\sin{\omega y}$.
The total energy of this surface profile  on a periodic cell $\Omega=[0,L]\times[0,L]$ is
\begin{equation}\label{energyspecific}
    E[h]
    =-\frac{c_1\pi L^2}{2}A^2B^2\omega+a\int_\Omega\left(c_1\abs{\nabla h}\log(\abs{\nabla h}+\gamma_0)+c_2\abs{\nabla h}+c_3\abs{\nabla h}^3\right)\diff{\vx}.
\end{equation}

Assume that $A\gg 1$ and $\omega \sim O(1)$, as the lattice constant $a\rightarrow 0$ in the length unit of the continuum model. For fixed $B$, we choose $A$ such that the energy \eqref{energyspecific} is minimized. Such an $A$  satisfies $\frac{\partial E}{\partial A}=0$, which is
\begin{equation}\label{eq..criticalPointsA}
\begin{split}
  \frac{c_1\pi L}{\omega a}&=\int_0^L\left(\frac{c_1\log(\abs{\nabla h}+\gamma_0)}{\abs{\nabla h}}+\frac{c_1}{\abs{\nabla h}+\gamma_0}+\frac{c_2}{\abs{\nabla h}}+3c_3\abs{\nabla h}\right)\cos^2{\omega y}\diff{y}.
\end{split}
\end{equation}

We use dominant balance method to find the asymptotic behavior  as $a\to 0$ of the constant $A$ that satisfies Eq.~\eqref{eq..criticalPointsA}.
Under the assumptions $A\gg 1$ and $\omega \sim O(1)$, we have $A\omega\gg 1$  and $\abs{\nabla h}=O(A\omega)$ as $a\to 0$, thus
\begin{equation*}
    \frac{\log(\abs{\nabla h}+\gamma_0)}{\abs{\nabla h}},\quad \frac{1}{\abs{\nabla h}+\gamma_0},\quad \frac{1}{\abs{\nabla h}}\ll \abs{\nabla h} \quad \text{ a.e.}\quad \vx\in\Omega.
\end{equation*}
The dominant balance in \eqref{eq..criticalPointsA} is
\begin{flalign*}
   \frac{c_1\pi L^2}{a\omega}&\sim
   \int_0^L\int_0^L3c_3\abs{\nabla h}\cos^2{\omega y}\diff{x}\diff{y}\\
   &\sim L\int_0^L3c_3B\cos^2{\omega y}\cdot A\omega|\cos{\omega y}|\diff{y}\\
   &= \frac{2\omega L^2}{\pi}\int_0^{\frac{\pi}{2}}3c_3AB \cos^3{z}\diff{z}\\
   &=\frac{4c_3\omega L^2 B}{\pi}A.
\end{flalign*}
Thus,
$$A\sim\frac{c_1\pi^2}{4c_3\omega^2B}a^{-1}\gg 1, \ \ a\to 0,$$
which is consistent with the assumption. In this case, the minimum energy is
\begin{align*}
     E[h]
    &=-\frac{c_1\pi L^2}{2}A^2B^2\omega+ac_3 LB^3\int_0^L(1+A^2\omega^2\cos^2{\omega y})^{3/2}\diff{y}+O(\log{a})\\
    &\sim-\frac{c_1\pi L^2}{2}A^2B^2\omega+\frac{4ac_3L^2B^3\omega^3A^3}{3\pi}+O(\log a)\\
    &\sim-\frac{c_1^3\pi^5L^2}{96c_3^2\omega^3}a^{-2}+O(\log a).
\end{align*}

Therefore, we obtain an upper bound for the minimum energy in the solution space:
\begin{equation}\label{energy-upper}
    \inf\limits_{h\in X} E[h]\le -\frac{c_1^3\pi^5L^2}{96c_3^2\omega^3}a^{-2}+o\left(a^{-2}\right), \ \ a\to 0.
\end{equation}
\begin{rmk}
Note that the regularized logarithmic term does not appear in the dominant balance, thus the modified energy has the same energy scaling law as the original energy.
\end{rmk}

\subsection{Energy lower bound and proof of \cref{thm..scalinglaw}}
For the lower bound of the minimum energy, by using Eq.~\eqref{coer-E} in the proof of \cref{prop..Coercivity}, we have
\begin{flalign}
    E[h]
    &\ge -\frac{c_1L}{2}\|\nabla \tilde{h}\|_{L^2(\Omega)}^2+\int_\Omega ac_3\abs{\nabla \tilde{h}+\vB}^3 \diff{\vx}\nonumber\\
    &\ge \int_\Omega \left\{-\frac{c_1L}{2}\abs{\nabla \tilde{h}}^2+ac_3\left(\abs{\nabla\tilde{h}}-\abs{\vB}\right)^3\right\}\diff{\vx}\nonumber\\
    &\ge \int_\Omega \left\{-\frac{c_1L}{2}\abs{\nabla \tilde{h}}^2+ac_3\abs{\nabla \tilde{h}}^3-3ac_3\abs{\vB}\abs{\nabla \tilde{h}}^2\right\}\diff{\vx}-ac_3\abs{\vB}^3L^2\nonumber\\
    &\ge \min\limits_{\abs{\nabla \tilde{h}}}\left\{-\frac{c_1L}{2}\abs{\nabla \tilde{h}}^2+ac_3\abs{\nabla \tilde{h}}^3-3ac_3\abs{\vB}\abs{\nabla \tilde{h}}^2\right\}L^2-ac_3\abs{\vB}^3L^2\nonumber\\
    &=-\frac{c_1^3L^5}{54c_3^2}a^{-2}-\frac{c_1^2L^4\abs{\vB}}{3c_3}a^{-1}-2c_1L^2\abs{\vB}^2-5ac_3\abs{\vB}^3L^2
    \nonumber\\
    &=-\frac{c_1^3L^5}{54c_3^2}a^{-2}+o(a^{-2}), \ \ a\to 0. \label{energy-lower}
\end{flalign}
Here the minimum in the last inequality is obtained at $\abs{\nabla \tilde{h}(\vx_0)}=\frac{c_1L}{3c_3}a^{-1}+2\abs{\vB}$ for some $\vx_0\in \Omega$.

\begin{proof}[Proof of \cref{thm..scalinglaw}]
    Combining the upper and lower bounds  of the energy minimizer in Eq.~\eqref{energy-upper}  and \eqref{energy-lower}, the minimum energy scaling law in Eq.~\eqref{eqn:scalinglaw}  holds.
\end{proof}

\subsection{Physical meaning and competition of instabilities}\label{subsec:competition3}
From the energy scaling law \eqref{eqn:scalinglaw} in \cref{thm..scalinglaw} for the stepped surface in $2+1$ dimensions and the proofs shown above, it can be seen that the major contribution to the energy of the minimum energy surface profile is {\bf step meandering}, i.e., undulations along the steps, and the leading order energy is $O(a^{-2})$ as $a\to0$.
On the other hand, the $1+1$ dimensional model describes the $2+1$ dimensional case in which all the steps are straight. In this case, there is a different energy scaling law \eqref{eqn:scaling1d} which was obtained in Ref.~\cite{Luo2017Bunching}, and the major contribution to this energy is  {\bf step bunching} with leading order energy of $O(\log{a})$ as $a\to 0$. Our result shows that step meandering instability in general dominates over the step bunching instability in $2+1$ dimensions under  elastic effects. Below we give some quantitative comparisons between the energies due to these two instabilities.

As shown in \cref{sec:upperbound}, a surface profile with the form in \eqref{eq..SpecificProfile} is able to achieve an energy with the same order as the minimum energy surface in 2+1 dimensions described by the energy scaling law \eqref{eqn:scalinglaw}.
\cref{fig..3Dview+contour}(a) and (b) show an example of such surface profile and locations of steps
 (contour lines of the surface height).
 It can be seen that step undulation dominates on this surface.

\begin{figure}[htbp]
\centering
	\subfigure[]{\includegraphics[width=0.49\textwidth]{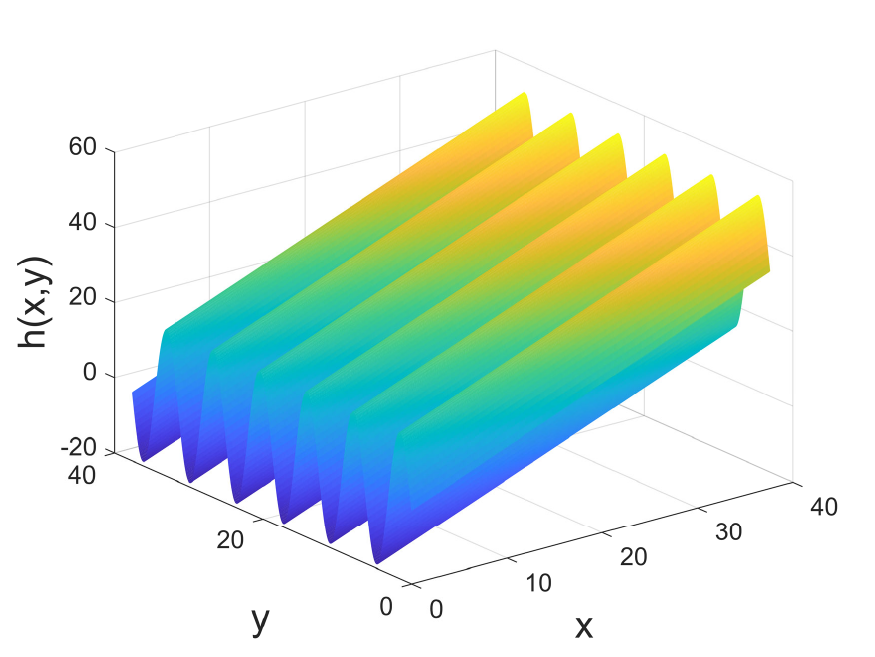}}
	\subfigure[]{\includegraphics[width=0.49\textwidth]{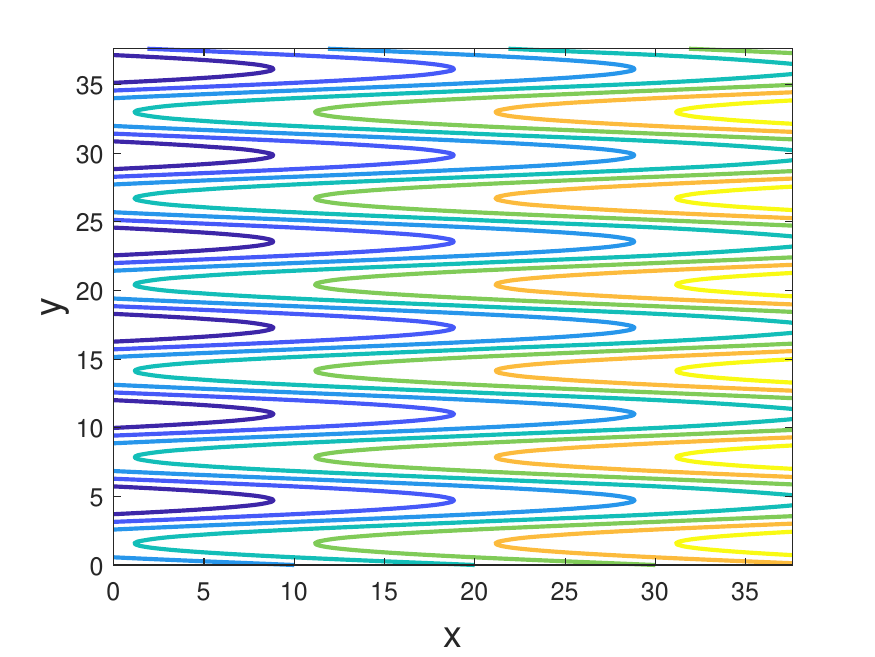}}
	\subfigure[]{\includegraphics[width=0.49\textwidth]{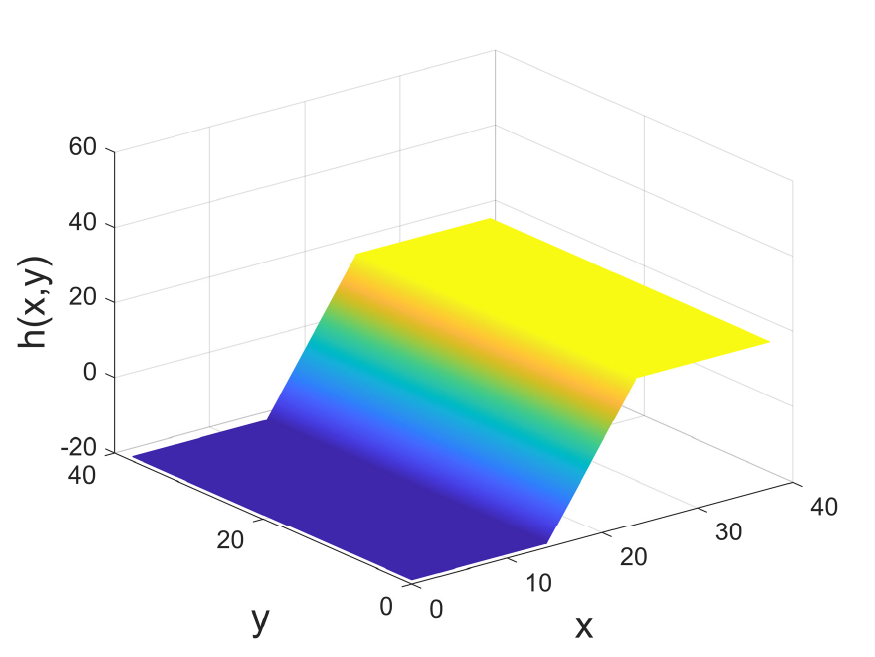}}
	\subfigure[]{\includegraphics[width=0.40\textwidth]{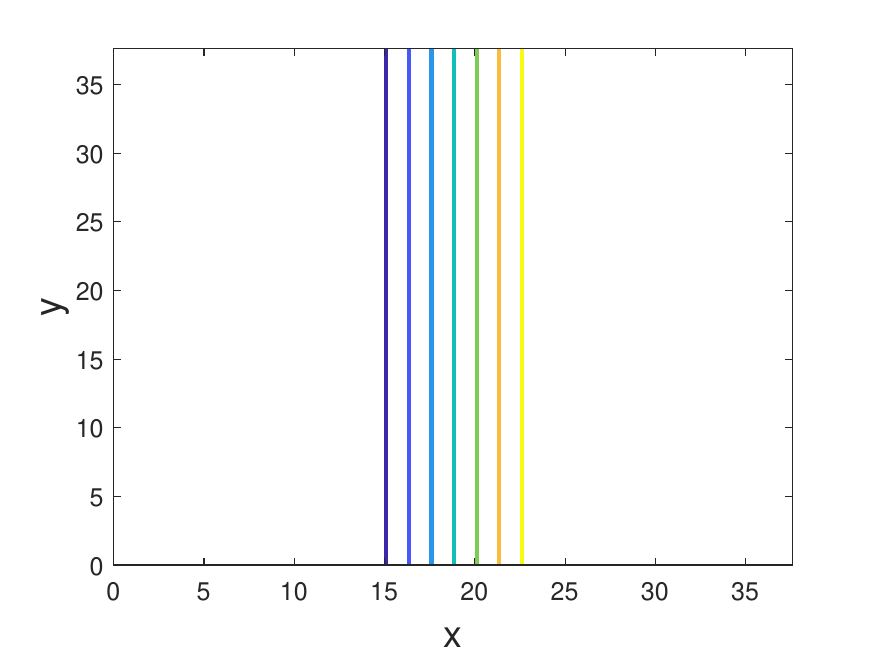}}
\caption{ (a) and (b): A surface profile with the step meandering dominant form in \eqref{eq..SpecificProfile} that achieves an energy with the same order as the minimum energy surface law \eqref{eqn:scalinglaw} in 2+1 dimensions, where $h(x,y)=x+6\pi\sin{y}$. (c) and (d): A step bunching surface profile with the one bunch form \eqref{eqn:one-bunch} obtained in \eqref{eq..SpecificProfile} that achieves an energy with the same order as the minimum energy surface law \eqref{eqn:scaling1d} in 1+1 dimensions, with $H=12\pi$ and $\rho=4$.
(a) and (c): Three-dimensional view of the surface. (b) and (d): Locations of steps.
The domain is $[0,12\pi]\times[0,12\pi]$. }
\label{fig..3Dview+contour}
\end{figure}

In Ref.~\cite{Luo2017Bunching}, it was shown that a surface profile with one bunch structure can achieve the minimum energy scaling law in 1+1 dimensions \eqref{eqn:scaling1d}.
The one bunch profile in a period $[0,L]$ is
\begin{equation}
 h(x)=\left\{
\begin{array}{ll}
-\frac{H}{2}, & {0\le x<\frac{L}2-\frac{H}{2\rho}},\vspace{1ex}\\
\rho \left(x-\frac{L}2\right), & {\abs{x-\frac{L}2}\le \frac{H}{2\rho}}, \vspace{1ex}\\
\frac{H}{2}, & \frac{L}2+\frac{H}{2\rho}<x\le L,
\end{array} \right. \label{eqn:one-bunch}
\end{equation}
where $H$ is the height of the step bunch, and $\rho>0$ is the step density within the step bunch.
\cref{fig..3Dview+contour}(c) and (d) show an example of such a step bunching surface profile and locations of steps.

Following the dominant balance analysis in \cref{sec:upperbound},
we take $A=\frac{c_1\pi^2}{4c_3\omega^2B}a^{-1}$
in the surface profile of the form \eqref{eq..SpecificProfile}. Under this condition, the 2+1 dimensional energy is
\begin{flalign}
    E_{2+1}=&-\frac{c_1\pi L^2}{2}A^2B^2\omega+a\int_\Omega\left[c_1\abs{\nabla h}\log(\abs{\nabla h}+\gamma_0)+c_2\abs{\nabla h}+c_3\abs{\nabla h}^3\right]\diff{\vx}\nonumber \\
    =&-\frac{c_1\pi L^2}{2}A^2B^2\omega\nonumber\\
    &+\frac{2aL^2}{\pi} \int_0^{\frac{\pi}{2}}\left[c_2B\sqrt{1+A^2\omega^2\cos^2{z}}+c_3B^3\left(\sqrt{1+A^2\omega^2\cos^2{z}}\right)^3\right.\nonumber\\
    &\left.+c_1B\sqrt{1+A^2\omega^2\cos^2{z}}\log\left(B\sqrt{1+A^2\omega^2\cos^2{z}}
    +\gamma_0\right)\right]\diff{z}, \label{E2+1asymp}
\end{flalign}
which achieves the energy scaling law of 2+1 dimensions with leading term of $O(a^{-2})$.

If all the steps are straight, $2+1$ dimensional continuum model is reduced to $1+1$ dimensional model (i.e., uniform in the direction of the steps). The energy in this case is
\begin{equation*}
    E_{1+1}=-c_1L\int_0^L h(x)\int_{\sR}\frac{h_x(y)}{x-y}\diff{y}\diff{x}+aL\int_0^L\left(c_1\abs{h_x}\log{\abs{h_x}}
    +c_2\abs{h_x}+c_3\abs{h_x}^3\right)\diff{x}.
\end{equation*}
According to the calculations in Ref.~\cite{Luo2017Bunching} for finding the 1+1 dimensional energy scaling law, we take step density $\rho=\sqrt{\frac{c_1H}{2c_3}}a^{-\frac{1}{2}}$ in the surface profile of the form \eqref{eqn:one-bunch}. Under this condition, the energy $E_{1+1}$ is
\begin{flalign}
E_{1+1}
=&c_1L\rho^2\int_{-\frac{H}{2\rho}}^{\frac{H}{2\rho}}\int_{-\frac{H}{2\rho}}^{\frac{H}{2\rho}}
\log{\sin(\frac{\pi(x-y)}{L})}\diff{y}\diff{x}\nonumber \\
&+aL\int_{-\frac{H}{2\rho}}^{\frac{H}{2\rho}}
\left(c_1\rho\log{\rho}+c_2\rho+c_3\rho^3\right)\diff{x}\nonumber\\
=&c_1LH^2\log\left(\frac{\pi H}{L\rho}\right)+aLH(c_1\log{\rho}+c_2+c_3\rho^2), \label{E1+1asymp}
\end{flalign}
which achieve the energy scaling law of 1+1 dimensions with leading term of $O(\log{a})$.

Now we compare the energies $E_{2+1}$ in \eqref{E2+1asymp} and $E_{1+1}$ in \eqref{E1+1asymp}, which give the correct asymptotic behaviors of the minimum energy in $2+1$ dimensions and the minimum energy for surfaces with straight steps, respectively.
In the comparisons, the domain length $L=Nl_t$ with $N$ being the number of steps in the domain and $l_t$ being the average distance between adjacent steps. Accordingly, the height increase over the domain  $H=Na$, the average slope  $B=\frac{a}{l_t}$, and $\omega=\frac{2\pi}{L}$. The parameters $c_1=\frac{(1-\nu)\sigma_0^2}{2\pi G}$, $c_2=\frac{g_1}{a}+c_1\log\frac{2\pi r_c}{\E a}$, and $c_3 a=\frac{g_3}{3}$ with $\sigma_0=\frac{2G(1+\nu)\eps_0}{1-\nu}$. For the values of the parameters, $g_1=\SI{0.03}{\joule\per \meter^2}$, $g_3=\SI{8.58}{\joule\per \meter^2}$, the lattice height $a=\SI{0.27}{\nano\meter}$, the elastic moduli $\nu=0.25$ and $G=\SI{3.8e10}{\pascal}$. The core parameter $r_c$ of a step is assumed to be $a$.

\begin{figure}[htbp]
\centering
	\subfigure[]{\includegraphics[width=0.49\textwidth]{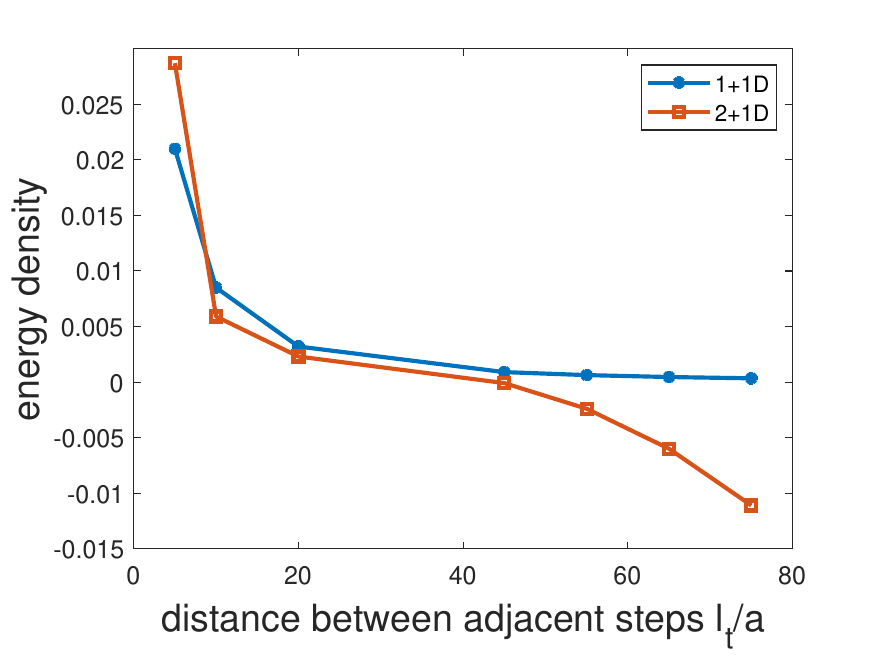}}
	\subfigure[]{\includegraphics[width=0.49\textwidth]{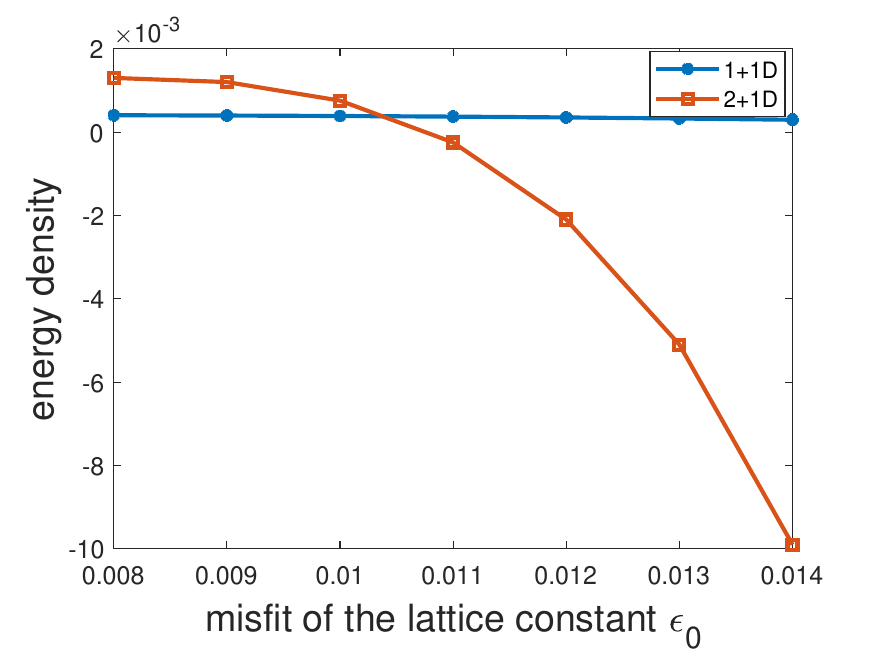}}
\caption{Minimum energy density $(J/{m^2})$ comparisons in terms of the distance between adjacent steps $l_t $ (for fixed $N=15$, $\epsilon_0=0.012$) in (a) and the misfit $\epsilon_0$ (for fixed $N=10$, $l_t=80a$) in (b), for step undulation dominated surface (2+1 D) with energy $E_{2+1}$ in \eqref{E2+1asymp} and step bunching dominated surface (1+1 D) with energy $E_{1+1}$ in \eqref{E1+1asymp}.}
\label{fig..energy_comparison}
\end{figure}

The minimum energy comparisons are summarized in \cref{fig..energy_comparison}, with different values of the adjacent distance $l_t$ and the misfit $\epsilon_0$. \cref{fig..energy_comparison}(a)  shows that for fixed misfit $\epsilon_0=0.012$, when $l_t$ is small, minimum energy of the surface with step bunching $E_{1+1}$ is smaller than the minimum energy of surface with step undulation $E_{2+1}$, which means that the step bunching instability dominates; when $l_t$ is large, $E_{2+1}<E_{1+1}$, which means that the step meandering instability dominates. As the $l_t$ increases, there exists a transition from step bunching instability to step meandering instability. \cref{fig..energy_comparison}(b) shows that for fixed adjacent distance $l_t=80a$, as the misfit $\epsilon_0$ increases, there also exists a transition from step bunching to step undulation.
These results show that step meandering dominates over step bunching in $2+1$ dimensions in general except for small inter-step distance $l_t$ and small misfit $\epsilon_0$.
These competitions between the two different step instabilities  in terms of energy are consistent with the results of linear instability analysis and numerical simulations obtained in Ref.~\cite{Zhu2009Continuum}. Note that the conclusions on competition of instabilities obtained here are based on the global energy minimizer, whereas those presented in Ref.~\cite{Zhu2009Continuum} are based on evolution tendency of the surface.

\section{Conclusion}\label{sec6}
In this paper, we have studied the continuum model for  epitaxial surfaces in 2+1 dimensions under elastic effects obtained in Ref.~\cite{Xu2009Derivation}. We have proposed a modified continuum model to fix the inaccurate formulation of the  energy in the small $|\nabla h|$ regime based on the underlying physics. This modification solves the problem of
possible illposedness due to the nonconvexity (in terms of the gradient of the surface) of the energy functional. The illposedness associated with nonconvexity in the original continuum model is in general not in the physical regime and our modification only leads to negligible change under the physically meaningful setting.

For the modified continuum model, we have proved the existence and uniqueness of the energy minimizer  by  coercivity and lower semi-continuity in the framework of calculus of variations. The existence and uniqueness of weak solution of the corresponding evolution equation has also been established based on the framework of gradient flow.

We have also obtained the minimum energy scaling law for the 2+1 dimensional epitaxial surfaces under elastic effects, which is attained by surfaces with step meandering instability and is essentially different from the energy scaling law for the 1+1 dimensional epitaxial surfaces under elastic effects~\cite{Luo2017Bunching} attained with step bunching surface profiles. Transition from the step bunching instability (where all steps are straight) to the step meandering instability has been discussed. Since the $2+1$ continuum model was derived from the corresponding discrete model as an asymptotic approximation~\cite{Xu2009Derivation}, it is expected that these minimum energy scaling laws and transition between the two surface instabilities also hold for the corresponding discrete model~\cite{Xu2009Derivation} (cf. \cite{Houchmandzadeh1995,Leonard2003,Tersoff1992}), with some adjustments in the proofs and calculations in \cref{sec5}.

\appendix

\section{Comparisons of continuum models with and without the modification}\label{appendix:regularization}

In this section of appendix, we further validate the modification by comparisons in linear instability analysis and numerical simulation by using continuum models with and without the modification.

Let the domain size be $L = Nl_t$, where $N$ is the number of steps in the domain and $l_t$ is the average distance between adjacent steps. Scaling $x$ and $y$ by $\frac{Nl_t}{2\pi}$, $h$ by $\frac{Na}{2\pi}$, and $t$ by $\frac{(Nl_t)^3Na}{g_1(2\pi)^4}$,
from Eqs.~\eqref{eqn:evolution-m} and \eqref{eqn:mu-m1},
we have the following dimensionless evolution equation after modification:
\begin{flalign}
    \frac{\partial h}{\partial t} &= \Delta\left[-\nabla\cdot\left(\frac{\nabla h}{\abs{\nabla h}}+\alpha_1\abs{\nabla h}\nabla h\right)\right.\nonumber\\
    &-\alpha_2\int_{-\infty}^{\infty}\int_{-\infty}^{\infty}\frac{(x-\xi)h_x(\xi,\eta)+(y-\eta)h_y(\xi,\eta)}{[(x-\xi)^2+(y-\eta)^2]^{3/2}}\diff{\xi} \diff{\eta}\nonumber\\
    &\left.-\alpha_3\nabla\cdot\left(\frac{\nabla h}{\abs{\nabla h}}\log\left(\frac{\alpha_4}{e}\big(\abs{\nabla h}+\gamma_0l_t/a\big)\right)+\frac{\nabla h}{\abs{\nabla h}+\gamma_0l_t/a}\right)\right], \label{eqn:dimensionless}
\end{flalign}
where the dimensionless constants
\begin{flalign}
    \alpha_1 = \frac{g_3a^2}{g_1l_t^2},\quad \alpha_2=\frac{(1-\nu)\sigma_0^2Na}{4\pi^2g_1G}, \quad \alpha_3 = \frac{(1-\nu)\sigma_0^2a}{2\pi g_1G},\quad \alpha_4 = \frac{2\pi r_c}{l_t}.
\end{flalign}

The linear instability analysis examines the short-term behavior of  a planar surface (representing a uniform straight step array) subject to small perturbations. Consider such a surface profile given by
\begin{equation}
    h(x,y,t) = -x +\epsilon e^{ik_1x+ik_2y+\omega t},
\end{equation}
where the amplitude of the perturbation $\epsilon$ is very small. Inserting this expression into
the dimensionless equation \eqref{eqn:dimensionless} and keeping the $O(\epsilon)$ terms, we obtain the following
dispersion relation:
\begin{equation}
    \omega = \omega_{origin}+\omega_{diff},
\end{equation}
where $\omega_{origin}$ is the dispersion relation without modification and  $\omega_{diff}$ is the difference:
\begin{flalign}
    \omega_{origin} &= (k_1^2+k_2^2)[2\pi \alpha_2(k_1^2+k_2^2)^{1/2} -(2\alpha_1+\alpha_3)k_1^2-(1+\alpha_1+\alpha_3\log{\alpha_4})k_2^2],\\
    \omega_{diff} &= \alpha_3(k_1^2+k_2^2)\left[\frac{(\gamma_0 l_t/a)^2}{(1+\gamma_0 l_t/a)^2} k_1^2-\left(\log(1+\gamma_0 l_t/a)-\frac{\gamma_0 l_t/a}{1+\gamma_0 l_t/a}\right)k_2^2\right].
\end{flalign}
Notice that when $\gamma_0\ll a/l_t$, i.e., $\gamma_0 l_t/a\ll 1$,  the difference $\omega_{diff}$ is of order $O((\gamma_0 l_t/a)^2)$. That is, the modification leads to very small difference in the linear instability relation.

\begin{figure}[htbp]
\centering
	\subfigure[]{\includegraphics[width=0.30\textwidth]{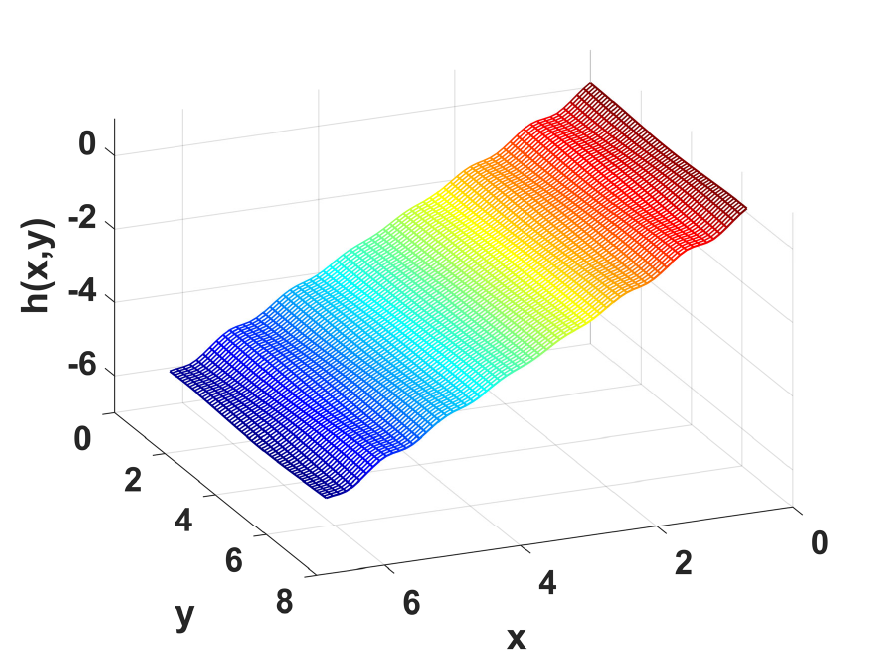}}
	\subfigure[]{\includegraphics[width=0.30\textwidth]{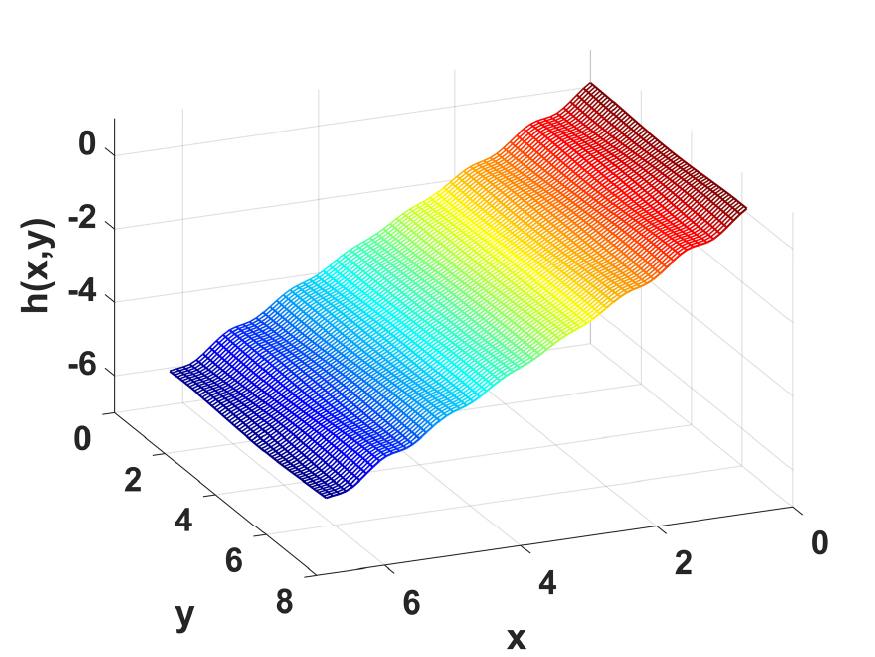}}
	\subfigure[]{\includegraphics[width=0.30\textwidth]{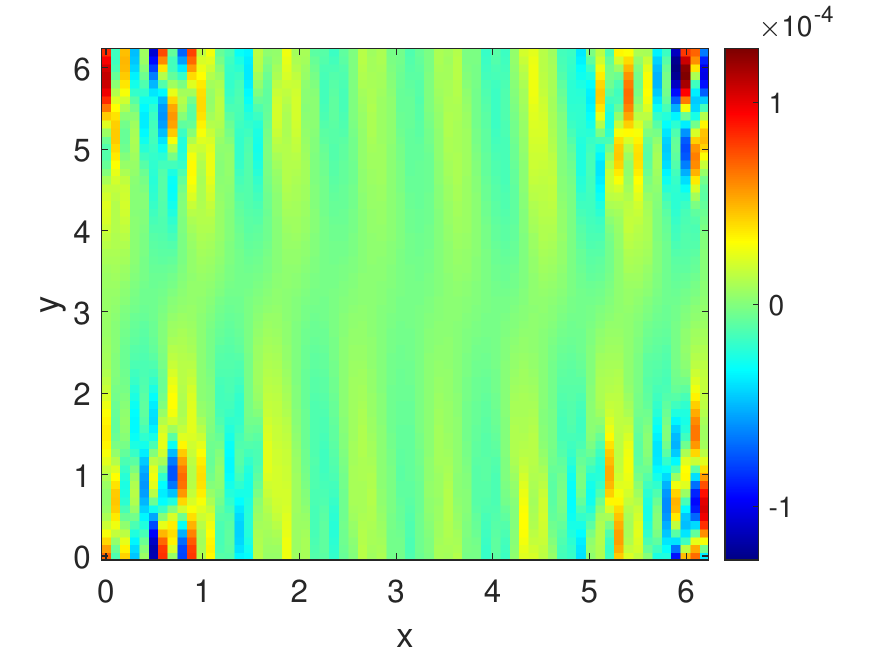}}
	\subfigure[]{\includegraphics[width=0.30\textwidth]{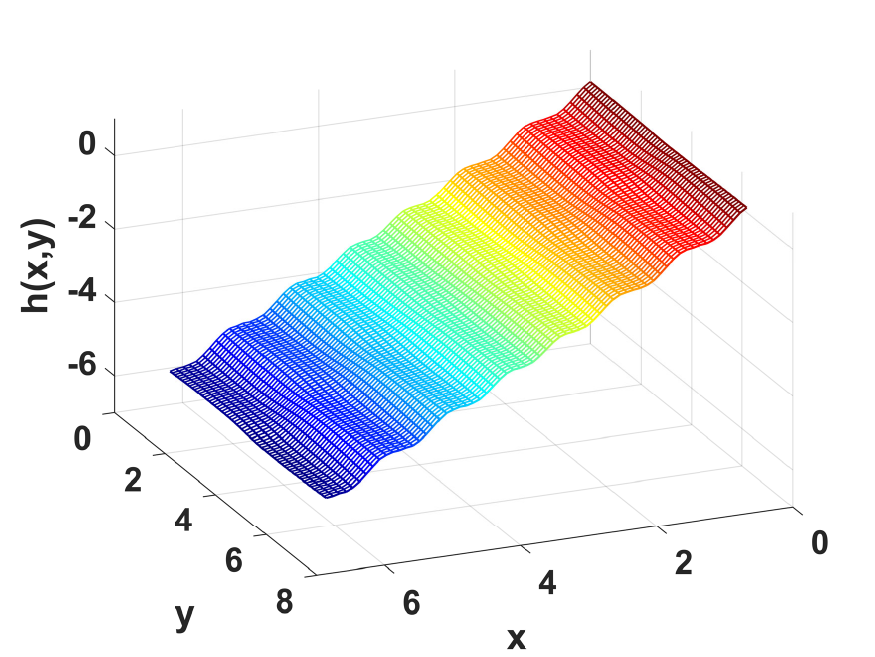}}
	\subfigure[]{\includegraphics[width=0.30\textwidth]{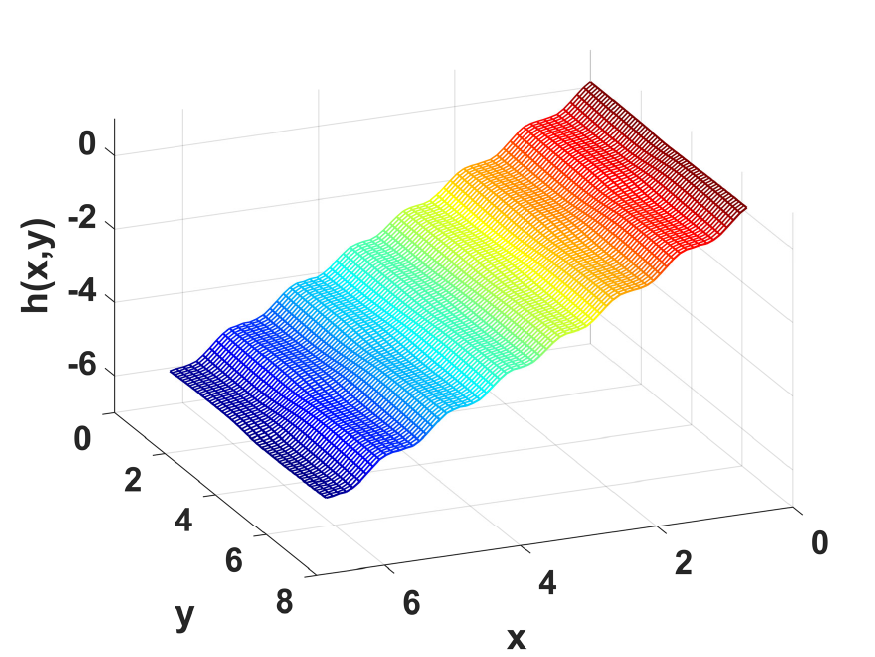}}
	\subfigure[]{\includegraphics[width=0.30\textwidth]{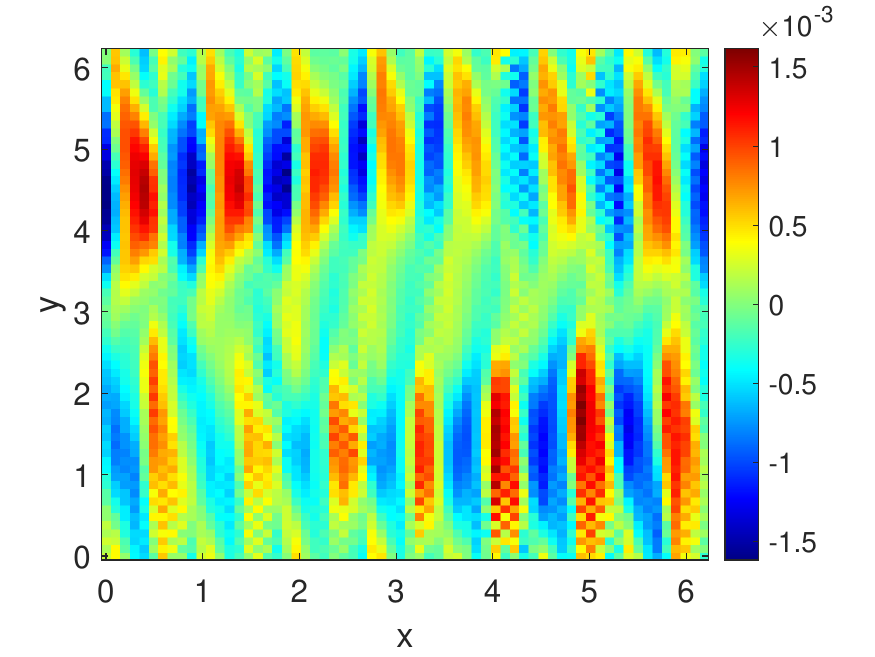}}
	\subfigure[]{\includegraphics[width=0.30\textwidth]{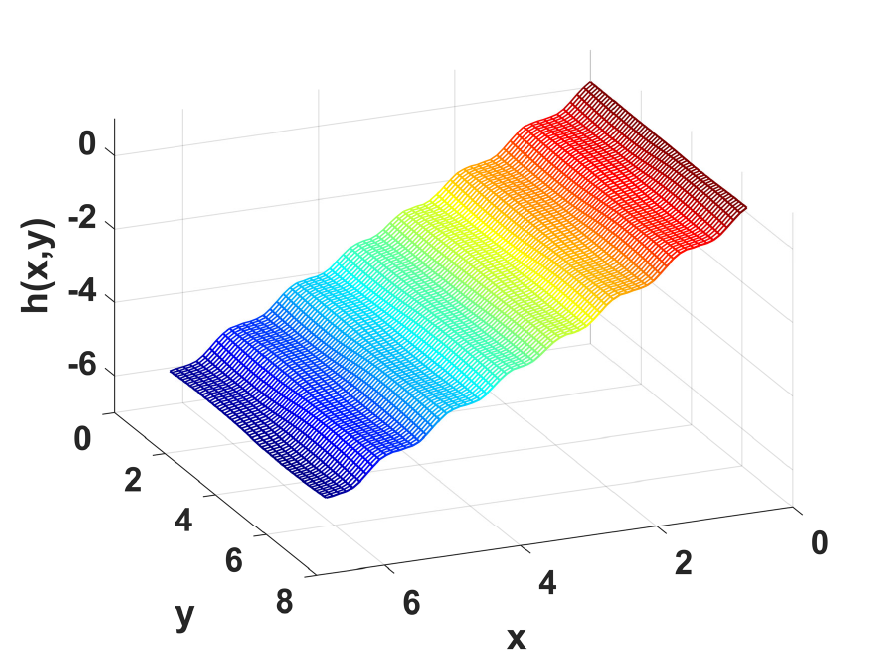}}
	\subfigure[]{\includegraphics[width=0.30\textwidth]{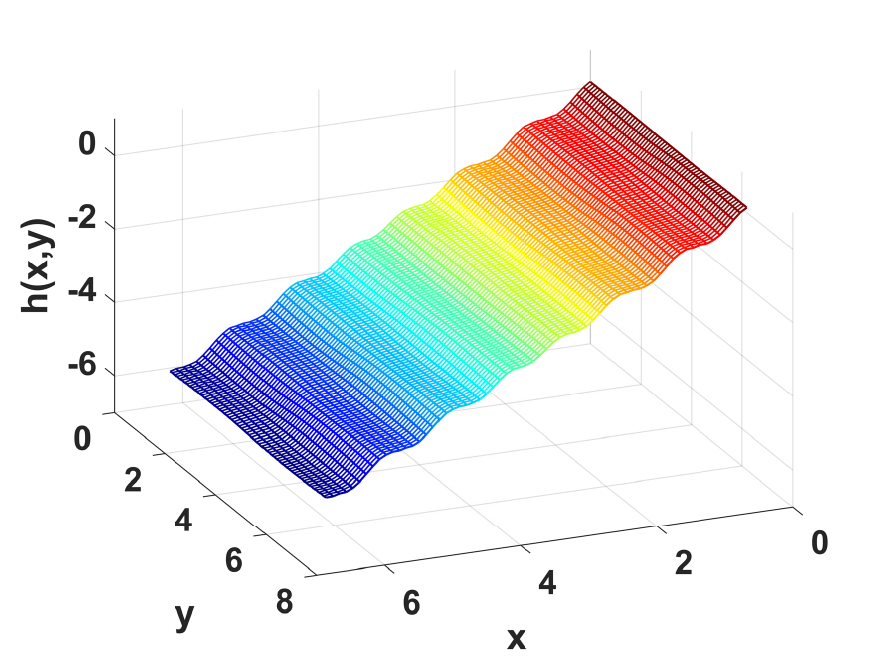}}
	\subfigure[]{\includegraphics[width=0.30\textwidth]{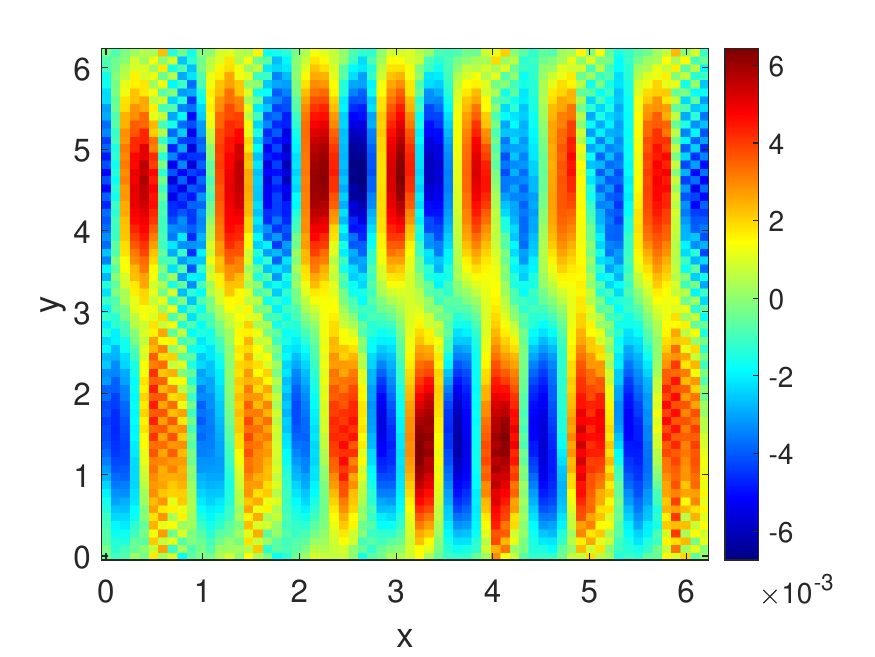}}
	\subfigure[]{\includegraphics[width=0.30\textwidth]{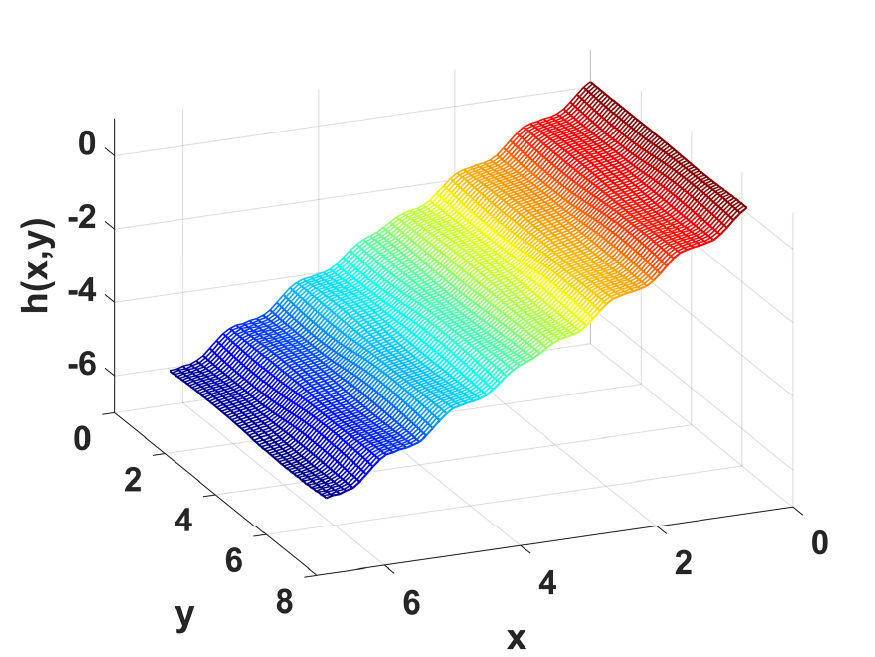}}
	\subfigure[]{\includegraphics[width=0.30\textwidth]{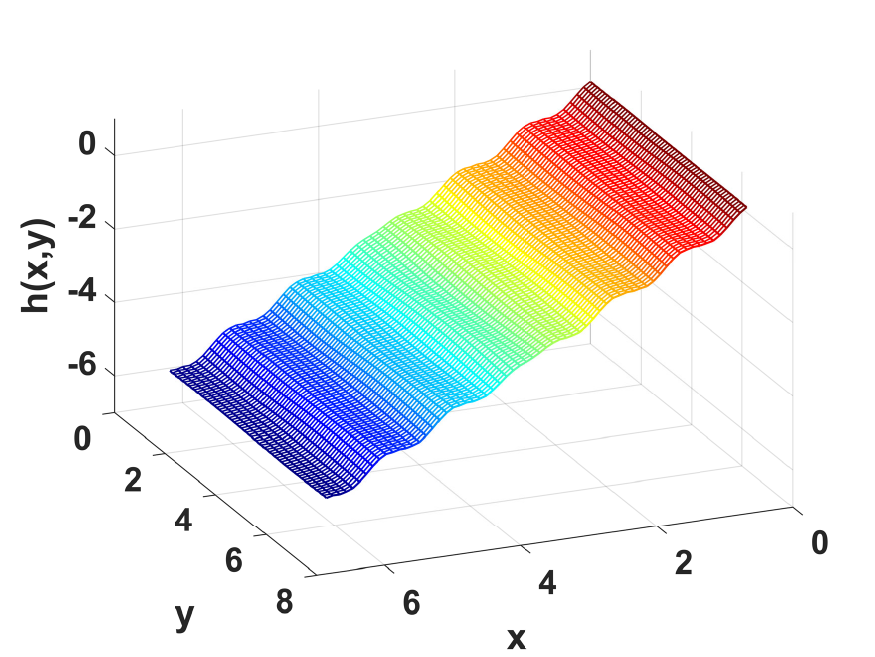}}
	\subfigure[]{\includegraphics[width=0.30\textwidth]{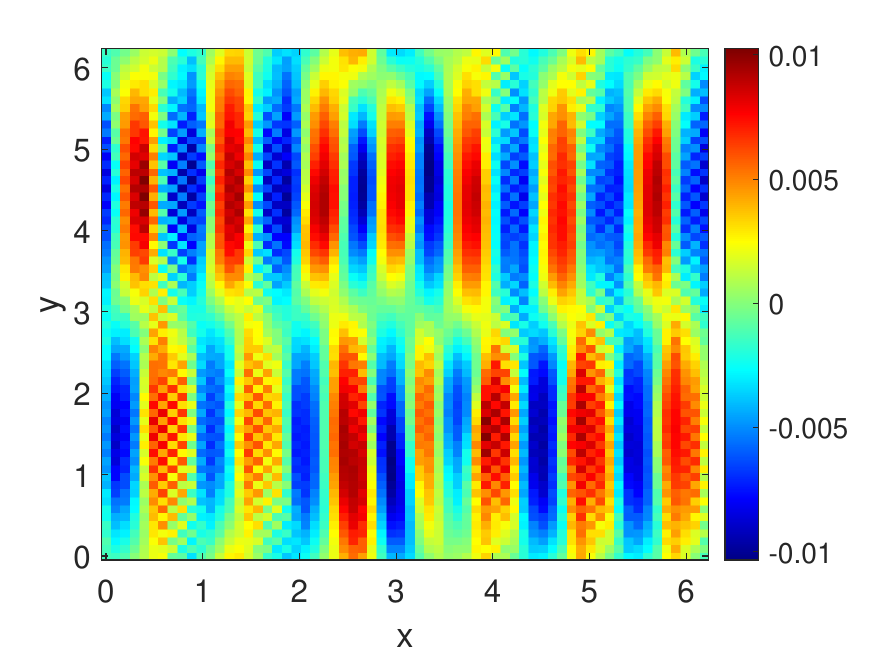}}
\caption{Time evolution of the surface with parameters $\epsilon_0=0.012$, $N=40$, $l_t=50a$, and  initial profile in Eq.~\eqref{eqn:ini}. (a)–(c): t=0; (d)–(f) t=0.0225; (g)–(i): t=0.0315; (j)–(l): t=0.0395; (m)–(o) t=0.0504. The left column of images show the three-dimensional view before modification, the column
of images in the middle show the three-dimensional view after modification, and the right column of images
show the difference of the surface height introduced by modification (with respect to the maximum height difference).}
\label{fig..3Dview-bunchingD}
\end{figure}

We also perform numerical simulations for the surface evolution beyond the linear instability regime by using the evolution equations with and without modification, and the comparison results are shown in Fig \ref{fig..3Dview-bunchingD}.
The simulation domain is $[0,2\pi]\times[0,2\pi]$, divided uniformly into $64\times64$ grid points. Here
$\epsilon_0=0.012$, $N=40$, $l_t=50a$, and the modification parameter $\gamma_0=9.7\times10^{-8}$. Accordingly, the small parameter in the dimensionless equation $\gamma_0 l_t/a=4.85\times10^{-6}$. Values of other parameters are the same as the example in Sec.~\ref{subsec:competition3}.
Initially, the surface is a planar one subject to some small perturbations with wave numbers near a wave-number pair $(k_1^0, k_2^0)=(7,1)$ in the unstable region from linear instability analysis:
\begin{equation}\label{eqn:ini}
    h(x,y) = -x+\sum\limits_{\abs{k_1-k_1^0}\le 1,\abs{k_2-k_2^0}\le 1}\frac{10^{-3}}{[(k_1-k_1^0)^2+(k_2-k_2^0)^2]^{3/2}+1}\cos(k_1 x+k_2 y).
\end{equation}
where $(k_1, k_2)$ is a wave-number pair in the unstable region in
linear instability analysis, and $k_1^0,k_2^0= 1$ or $2$.
It can be seen that the relative different between the results using the two equations remains small during the evolution.

\section{Definition of $\lambda-$convexity and Theorem 4.0.4 in Ref.~\cite{Ambrosio2008Gradient}}\label{appendix:convexity}
\begin{defi}[$\lambda-$convexity \cite{Ambrosio2008Gradient}] In a metric space $(\mathscr{S},d)$,
a functional $\phi: \mathscr{S}\to (-\infty,+\infty]$ is called $\lambda-$convex on a curve $\gamma: t\in[0,1]\mapsto \gamma_t\in \mathscr{S}$ for some $\lambda\in\sR$ if
\begin{equation*}
    \phi(\gamma_t)\le(1-t)\phi(\gamma_0)+t\phi(\gamma_1)-\frac{1}{2}\lambda t(1-t)d^2(\gamma_0,\gamma_1),\quad \forall t\in[0,1]
\end{equation*}
\end{defi}

\begin{thmm}[Generation and main properties of the evolution semigroup~\cite{Ambrosio2008Gradient}]\label{weak solution existence}\leavevmode \\
Assume $(\mathscr{S},d)$ is a complete metric space and $\phi :\mathscr{S}\to (-\infty,+\infty]$ is a proper, coercive, lower semicontinuity functional. Furthermore, for every choice of $\omega, v_0, v_1\in D(\phi)$, there exists a curve $\gamma = \gamma_t, t\in[0,1]$ with $\gamma_0 = v_0, \gamma_1 = v_1$ such that for some $\lambda\in\sR$,
\begin{equation*}
    v\mapsto \Phi(\tau,\omega;v)=\frac{1}{2\tau}d^2(v,\omega)+\phi(v)
\end{equation*}
is $(\tau^{-1}+\lambda)-$convex on $\gamma$ for each $\tau$ such that $\tau^{-1}+\lambda>0$.
Then we have
\begin{itemize}
    \item[i.] Uniqueness and evolution variational inequalities: $u$ is the unique solution of the evolution variational inequality
    \begin{equation*}
        \frac{1}{2}\frac{\diff}{\diff{t}}d^2(u(t),v)+\frac{1}{2}\lambda d^2(u(t),v)+\phi(u(t))\le \phi(v) \quad\mathscr{L}^1-a.e. ~t>0,\forall v\in D(\phi).
    \end{equation*}
    among all the locally absolutely continuous curves such that $\lim\limits_{t\downarrow 0}u(t) = u_0$ in $\mathscr{S}$. Where $D(\phi):=\{v\in\mathscr{S}:\phi(v)<+\infty\}\neq \emptyset$
    \item[ii.] Regularizing effect: $u$ is a locally Lipschitz curve of maximal slope with $u(t)\in D(\abs{\partial \phi})\subset D(\phi)$ for $t>0$; in particular, if $\lambda \ge 0$, the following a priori bounds hold:
    \begin{align*}
        &\phi(u(t))\le \phi_t(u_0)\le \phi(v)+\frac{1}{2t}d^2(v,u_0) \quad \forall v\in D(\phi),\\
        &\abs{\partial \phi}^2(u(t))\le \abs{\partial \phi}^2(v)+\frac{1}{t^2}d^2(v,u_0) \quad \forall v\in D(\abs{\partial \phi}).
    \end{align*}
\end{itemize}
\end{thmm}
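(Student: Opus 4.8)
The plan is to construct the evolution semigroup by the minimizing movement (implicit Euler / De Giorgi) scheme and then read off the evolution variational inequality (EVI) in the limit, this being precisely the route for which the stated $(\tau^{-1}+\lambda)$-convexity hypothesis on the Moreau--Yosida functional $\Phi(\tau,\omega;\cdot)$ is tailored. Fix a step size $\tau>0$, set $U^0_\tau:=u_0$, and define recursively
\[
U^n_\tau \in \arg\min_{v\in\mathscr{S}} \Phi(\tau, U^{n-1}_\tau; v) = \arg\min_{v\in\mathscr{S}} \left\{ \frac{1}{2\tau} d^2(v, U^{n-1}_\tau) + \phi(v) \right\}.
\]
First I would verify that each step is well posed: since $\phi$ is proper, coercive and lower semicontinuous, the direct method produces a minimizer, and for $\tau^{-1}+\lambda>0$ the assumed convexity of $\Phi(\tau,\omega;\cdot)$ along connecting curves forces it to be unique. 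Interpolating the discrete values $(U^n_\tau)_n$ (piecewise constant, together with the variational De Giorgi interpolant) yields an approximate curve $\bar u_\tau(t)$.

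Second, the convexity hypothesis is exactly what upgrades the first-order minimality of $U^n_\tau$ into a \emph{discrete EVI}. Evaluating the convexity of $v\mapsto\Phi(\tau,\omega;v)$ along the curve joining $U^n_\tau$ to an arbitrary competitor $v$, with $\omega=U^{n-1}_\tau$, and using minimality gives, after rearrangement,
\[
\frac{1}{2\tau}\left[ d^2(U^n_\tau, v) - d^2(U^{n-1}_\tau, v) \right] + \frac{\lambda}{2} d^2(U^n_\tau, v) + \phi(U^n_\tau) \le \phi(v), \quad \forall v\in D(\phi).
\]
Specializing $v=U^{n-1}_\tau$ produces the \emph{discrete energy-dissipation estimate}
\[
\phi(U^N_\tau) + \frac{1}{2\tau}\sum_{n=1}^N d^2(U^n_\tau, U^{n-1}_\tau) \le \phi(u_0),
\]
which bounds the squared increments and hence gives uniform Hölder-$\tfrac12$ (discrete $L^2$-in-time velocity) control of $\bar u_\tau$, furnishing the compactness needed below.

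Third, I would pass to the limit $\tau\to0$. Applying the discrete EVI with $v$ taken from a second scheme started at a different datum yields a discrete contraction, and a discrete Gronwall argument shows $\{\bar u_\tau\}$ is Cauchy as $\tau\to0$, producing a locally absolutely continuous limit $u(t)$ with $u(0)=u_0$. Passing to the limit in the discrete EVI, using lower semicontinuity of $\phi$ and continuity of $d^2(\cdot,v)$, gives the continuous EVI
\[
\frac{1}{2}\frac{\diff}{\diff{t}} d^2(u(t), v) + \frac{\lambda}{2} d^2(u(t), v) + \phi(u(t)) \le \phi(v) \quad \text{for a.e. } t>0,\ \forall v\in D(\phi).
\]
Assertion (i) then follows \emph{directly from the EVI}: writing it for two solutions and testing each against the other (with the metric chain rule / doubling of variables) gives $\frac{\diff}{\diff{t}} d^2(u(t),\tilde u(t)) \le -2\lambda\, d^2(u(t),\tilde u(t))$, so Gronwall forces $u=\tilde u$ for shared initial data and yields the $\lambda$-contraction of the semigroup.

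Finally, for the regularizing effect (ii) I would use the EVI quantitatively: testing with $v=u(s)$ and exploiting the monotonicity of $t\mapsto\phi(u(t))$ built into the scheme gives $\phi(u(t))\le\phi_t(u_0):=\inf_v\{\phi(v)+\frac{1}{2t}d^2(v,u_0)\}\le\phi(v)+\frac{1}{2t}d^2(v,u_0)$, while the velocity/slope estimates control $\abs{\partial\phi}(u(t))$ and show $u$ is a locally Lipschitz curve of maximal slope with $u(t)\in D(\abs{\partial\phi})$, delivering the stated a priori bounds when $\lambda\ge0$. \textbf{The main obstacle} is the combined task of deriving the discrete EVI and passing to the limit in the purely metric setting: with no linear structure one cannot differentiate, so all first-order information must be extracted from convexity along curves, the $\tau\to0$ compactness rests delicately on the energy-dissipation estimate together with lower semicontinuity, and one must control the variational-interpolation error carefully enough to ensure the limit satisfies the \emph{pointwise-in-time} EVI rather than merely an integrated form.
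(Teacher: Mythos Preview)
The paper does not prove this theorem at all: it is quoted verbatim in Appendix~A as Theorem~4.0.4 from Ambrosio--Gigli--Savar\'e \cite{Ambrosio2008Gradient} and used as a black box in the proof of \cref{thm..ExistenceEVI}. There is therefore no ``paper's own proof'' to compare against.

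That said, your sketch is a faithful outline of the argument actually given in \cite{Ambrosio2008Gradient}: the construction via the De~Giorgi minimizing-movement scheme, the one-step discrete EVI obtained from the $(\tau^{-1}+\lambda)$-convexity of $\Phi(\tau,\omega;\cdot)$ along connecting curves, the Cauchy estimate between two discrete schemes, and the passage to the limit via lower semicontinuity are exactly the ingredients of that proof. The main point where your sketch is a bit loose is the compactness/Cauchy step: in the pure metric setting one does not get Cauchy-ness from a single energy-dissipation bound, but rather by comparing two discrete trajectories with step sizes $\tau$ and $\sigma$ via the discrete EVI and a careful discrete Gronwall argument (this is the technical heart of Chapter~4 in \cite{Ambrosio2008Gradient}); you allude to this but conflate it slightly with the dissipation estimate. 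Also, the scheme as written requires $u_0\in D(\phi)$; the extension to $u_0\in\overline{D(\phi)}$ uses the contraction a posteriori. These are refinements rather than gaps, and in any case fall outside the scope of the present paper.
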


\section*{Acknowledgments}
This work was supported by the Hong Kong Research Grants Council General Research Fund 16313316 and the Project of Hetao Shenzhen-HKUST Innovation Cooperation Zone
HZQB-KCZYB-2020083 (Y.X.), the National Natural Science Foundation of China Grant No. 12101401  and Shanghai Municipal of Science and Technology Major Project No. 2021SHZDZX0102 (T.L.).


\end{document}